\author{Florin Ambro} 
\address{Institute of Mathematics ``Simion Stoilow'' of the Romanian
Academy\\
P.O. BOX 1-764, RO-014700 Bucharest\\ 
Romania.}
\email{florin.ambro@imar.ro}
\newcommand{\isoto}{{\overset{\sim}{\rightarrow}}}
\newcommand{\C}{{\mathbb C}}
\newcommand{\Q}{{\mathbb Q}}
\newcommand{\Z}{{\mathbb Z}}
\newcommand{\N}{{\mathbb N}}
\newcommand{\R}{{\mathbb R}}
\newcommand{\cI}{{\mathcal I}}
\newcommand{\cM}{{\mathcal M}}
\newcommand{\cO}{{\mathcal O}}
\newcommand{\bH}{{\mathbb H}}  
\newcommand{\Char}{\operatorname{char}}
\newcommand{\emb}{\operatorname{emb}}
\newcommand{\Gr}{\operatorname{Gr}}
\newcommand{\Int}{\operatorname{int}}
\newcommand{\Ker}{\operatorname{Ker}}
\newcommand{\relint}{\operatorname{relint}}
\newcommand{\Sing}{\operatorname{Sing}}
\newcommand{\Spec}{\operatorname{Spec}}
\newcommand{\Supp}{\operatorname{Supp}}
\theoremstyle{plain}
\newtheorem{thm}{Theorem}[section]
\newtheorem{lem}[thm]{Lemma}
\newtheorem{cor}[thm]{Corollary}
\newtheorem{prop}[thm]{Proposition}
\theoremstyle{definition}
\newtheorem{exmp}[thm]{Example}
\newtheorem{rem}[thm]{Remark}
\newtheorem{ack}{Acknowledgments}   
\theoremstyle{remark}
\begin{document}

\bibliographystyle{amsalpha+}
\title{On toric face rings I}
\maketitle

\begin{abstract} 
We construct an explicit Deligne - Du Bois complex for algebraic varieties which are 
locally analytically isomorphic to the spectrum of a toric face ring.
\end{abstract} 


\footnotetext[1]{2010 Mathematics Subject Classification. Primary: 14M25. Secondary: 14F40.}

\footnotetext[2]{Keywords: toric face ring, weakly toroidal singularities, $h$-differential forms.}


\section*{Introduction}


Our motivation to study toric face rings is to construct toric examples of semi-log canonical
singularities (cf.~\cite{Kbook}).
It is known that for the class of log canonical singularities, (normal) toric examples form a reasonably 
large subclass, useful for testing open problems. These models can be defined either geometrically 
as normal affine equivariant torus embeddings $T\subset X$,  or algebraically as 
$X=\Spec \C[M\cap \sigma]$, where $M$ is lattice and $\sigma\subset M_\R$ is a rational polyhedral 
cone. Here $\C[M\cap \sigma]=\oplus_{m\in M\cap \sigma}\C \cdot \chi^m$ is a semigroup ring
with multiplication $\chi^m\cdot \chi^{m'}=\chi^{m+m'}$.

 Semi-log canonical singularities appear at the boundary of the moduli space of canonically polarized
varieties with log canonical singularities. These singularities are weakly normal, but not necessarily
normal or even irreducible. Here are two examples:
\begin{itemize}
\item The {\em pinch point} is the surface singularity with local analytic model $$0\in X:(xy^2-z^2=0)\subset \C^3.$$
We have $X=\Spec \C [S]$, where $\C [S]=\oplus_{s\in S}\C \cdot \chi^s$ is the semigroup algebra 
associated to the semigroup $S=\N^2_{x_2>0}\sqcup 2\N\times 0$. The multiplication is given by 
$\chi^s\cdot \chi^{s'}=\chi^{s+s'}$. The torus $T=\Spec \C[\Z^2]$ acts naturally on $X$,
and $T\subset X$ becomes an affine equivariant torus embeddings, which is irreducible but not normal.
\item The {\em normal crossings singularity} has the local analytic model 
$$
0\in X:(\prod_{i=1}^q z_i=0)\subset \C^{d+1} \ (1\le q\le d+1) .
$$
The torus $T=\Spec \C[\Z^{d+1}]$ acts naturally on $\C^{d+1}$ and leaves $X$ invariant. In fact, 
$T$ acts on each irreducible component of $X$, inducing a structure of equivariant embedding of a 
torus which is a quotient of $T$. Corresponding to this action, we can write $X=\Spec \C [\cup_{i=1}^q S_i]$,
where $S_i=\{s\in \N^{d+1};s_i=0\}$ and $\C [\cup_{i=1}^q S_i]$ is the Stanley-Reisner ring with $\C$-vector space structure
$\oplus_{s\in \cup_{i=1}^qS_i}\C\cdot \chi^s$, and multiplication defined as follows: $\chi^s\cdot \chi^{s'}$ is
$\chi^{s+s'}$ if there exists $i$ such that $s,s'\in S_i$, and zero otherwise. 
\end{itemize}

Toric face rings are a natural generalization of semigroup rings and Stanley-Reisner rings.
We will use the definition of Ichim and R\"omer~\cite{IR07}, which is based on previous work of Stanley,
Reisner, Bruns and others (see the introduction of~\cite{IR07}). 
A {\em toric face ring} $\C[\cM]$ is associated to a {\em monoidal complex} $\cM=(M,\Delta,(S_\sigma)_{\sigma\in \Delta})$,
the data consisting of a lattice $M$, a fan $\Delta$ consisting of rational polyhedral cones in $M$, and a collection of semigroups 
$S_\sigma\subseteq M\cap \sigma$, such that $S_\sigma$ generates the cone $\sigma$ and $S_\tau=S_\sigma\cap \tau$
if $\tau$ is a face of $\sigma$. The $\C$-vector space structure is 
$$
\C[\cM]=\oplus_{s\in \cup_{\sigma\in \Delta} S_\sigma}\C\cdot \chi^s,
$$ 
with the following multiplication: $\chi^s\cdot \chi^{s'}$ is
$\chi^{s+s'}$ if there exists $\sigma\in \Delta$ such that $s,s'\in S_\sigma$, and zero otherwise. 

Toric face rings are glueings of semigroup rings, as $\C[\cM] \simeq \varprojlim_{\sigma\in \Delta}\C[S_\sigma]$.
The affine variety $X=\Spec \C[\cM]$ has a natural action by the torus $T=\Spec \C[M]$, and the cones of the fan
are in one to one correspondence with the orbits of the action. The $T$-invariant closed subvarieties of $X$ are also induced
by a toric face ring, obtained by restricting the fan $\Delta$ to a subfan. 

 We say that $X$ has {\em weakly toroidal singularities} if $X$ is weakly normal, and locally analytically isomorphic
to $\Spec \C[\cM]$ for some monoidal complex $\cM$. In the sequel to this paper~\cite{Am17}, we determine when 
$X$ has semi-log canonical singularities. In this paper, we aim to understand the topology of $X$.

Let $X/\C$ be a proper variety. If $X$ is smooth, the cohomology of $X^{an}$ is determined by differential forms 
on $X$~\cite{Del71}: the filtered complex $(\Omega^*_X,F)$, where $\Omega^*_X$ is the de Rham complex of 
regular differential forms on $X$, and $F$ is the naive filtration, induces in hypercohomology a
spectral sequence 
$$
E_1^{pq}=H^q(X,\Omega^p_X)\Longrightarrow \Gr_F^p H^{p+q}(X^{an},\C)
$$
which degenerates at $E_1$, and converges to the Hodge filtration on the cohomology groups of $X^{an}$.
If $X$ has singularities, its topology is determined by rational forms on a smooth simplicial resolution~\cite{Del74, DB81}:
if $\epsilon\colon X_\bullet\to X$ is a smooth simplicial resolution, the Deligne-Du Bois filtered complex
$$
(\underline{\Omega}^*_X,F):=R\epsilon_*(\Omega^*_{X_\bullet},F)
$$
induces in hypercohomology a spectral sequence 
$$
E_1^{pq}=\bH^q(X,\Gr^p_F\underline{\Omega}^p_X[p])\Longrightarrow \Gr_F^p H^{p+q}(X^{an},\C)
$$
which degenerates at $E_1$, and converges to the Hodge filtration on the cohomology groups of $X^{an}$.
The filtered complex $(\underline{\Omega}^*_X,F)$ does not depend on the choice of $\epsilon$, when viewed 
in the derived category of filtered complexes on $X$. It is a rather complicated object in general: for example $F$
may not be a naive filtration, so each $\Gr^p_F\underline{\Omega}^*_X[p]$ is a complex.

Steenbrink, Danilov, Du Bois and Ishida have observed
that if the singularities of $X$ are simple enough, one can still compute the cohomology of $X$ using differential
forms on $X$:
\begin{itemize}
\item Suppose $X$ has only quotient singularities, or toroidal singularities (i.e. locally analytically isomorphic to 
a normal affine toric variety). Let $w\colon U\subset X$ be the smooth locus of $X$. The complement has 
codimension at least $2$, since $X$ is normal. In particular, 
$$
\tilde{\Omega}^p_X:=w_*(\Omega^p_U)
$$ 
is a coherent $\cO_X$-module. Then $(\tilde{\Omega}^*_X,F_{naive})$ is a canonical (functorial) choice for the 
Deligne-Du Bois complex. In particular, the cohomology of $X$ is determined by rational differential forms on $X$
which are regular on the smooth locus of $X$~\cite{Stee77,Dan78,Dan91}. 
\item Suppose $X$ has normal crossings singularities. Let $\epsilon_0\colon X_0\to X$ be the normalization of $X$,
let $X_1=X_0\times_X X_0$. Both $X_0$ and $X_1$ are smooth, and if we define 
$$
\tilde{\Omega}^p_X:=\Ker(\epsilon_0\Omega^p_{X_0}\rightrightarrows \epsilon_1\Omega^p_{X_1}),
$$
then $(\tilde{\Omega}^*_X,F_{naive})$ is a canonical (functorial) choice for the 
Deligne-Du Bois complex~\cite{DB81}.
\item Let $Y=\Spec \C[M\cap \sigma]$ be a normal affine toric variety. Let $X\subset Y$ be a $T$-invariant
closed subvariety. One can define combinatorially a coherent $\cO_X$-module $\tilde{\Omega}^p_X$ (a glueing 
of certain regular forms on the orbits of $X$), such that $(\tilde{\Omega}^*_X,F_{naive})$ is a canonical (functorial) 
choice for the Deligne-Du Bois complex. The same holds for a semi-toroidal variety with a good 
filtration~\cite{Ish85}.
\end{itemize}

The aim of this paper is to unify all these results, and extend them to varieties with weakly toroidal singularities.
What all these examples have in common is the {\em vanishing property}
$$
R^i\epsilon_*\Omega^p_{X_\bullet}=0 \ (i> 0),
$$
where $\epsilon\colon X_\bullet\to X$ is a smooth simplicial resolution. This means that in the filtered derived 
category, the Deligne-Du Bois complex is equivalent to $(\tilde{\Omega}^*_X,F_{naive})$, where
$$
\tilde{\Omega}^p_X:=h^0(\underline{\Omega}^p_X)=\epsilon_*(\Omega^p_{X_\bullet})=\Ker(\epsilon_0\Omega^p_{X_0}\rightrightarrows \epsilon_1\Omega^p_{X_1})
$$
is the cohomology in degree zero of the complex $\underline{\Omega}^p_X$.
As defined, $\tilde{\Omega}^p_X$ is uniquely defined only up to an isomorphism. But if we require that 
$\epsilon_0\colon X_0\to X$ is a desingularization, then $\tilde{\Omega}^p_X$ is uniquely defined, and 
has a description in terms of rational differential forms on $X$. More precisely, let $\epsilon_0\colon X_0\to X$
be a desingularization, let $X_1\to X_0\times_X X_0$ be a desingularization. Then $\tilde{\Omega}^p_X$
consists of rational differential $p$-forms $\omega$ on $X$ such that $\omega$ is regular on the smooth
locus of $X$, the rational differential $\epsilon_0^*\omega$ extends to a regular $p$-form everywhere on $X_0$,
and $p_1^*\epsilon_0^*\omega=p_2^*\epsilon_0^*\omega$ on $X_1$. The $\cO_X$-module $\tilde{\Omega}^p_X$
coincides with the {\em sheaf of $h$-differential forms} $(\Omega^p_h)|_X$ introduced by Huber and J\"order~\cite{HJ14}.
It is functorial in $X$.

The main result of this paper is that weakly toroidal singularities satisfy the same vanishing property, hence 
the sheaf of $h$-differentials, endowed with the naive filtration, computes the cohomology of $X^{an}$:

\begin{thm}\label{mh} Let $X/\C$ be a variety with weakly toroidal singularities.
\begin{itemize}
\item[a)] The filtered complex $(\tilde{\Omega}^*_X,F_{naive})$, consisting of the sheaf of $h$-differential forms
on $X$ and its naive filtration, is a canonical (and functorial) choice for the Deligne-Du Bois complex of $X$.
\item[b)] $X$ has Du Bois singularities (i.e. $\cO_X=\tilde{\Omega}^0_X$).
\item[c)] Moreover, suppose $X/\C$ is proper.  Then $(\tilde{\Omega}^*_X,F_{naive})$ 
induces in hypercohomology a spectral sequence 
$$
E_1^{pq}=H^q(X,\tilde{\Omega}^p_X)\Longrightarrow \Gr_F^p H^{p+q}(X^{an},\C)
$$
which degenerates at $E_1$, and converges to the Hodge filtration on the cohomology groups of $X^{an}$.
\end{itemize}
\end{thm}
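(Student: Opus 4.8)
The entire theorem is reduced, by the discussion preceding its statement, to the \emph{vanishing property}
\[
R^i\epsilon_*\Omega^p_{X_\bullet}=0 \qquad (i>0,\ p\ge 0)
\]
for a single smooth simplicial resolution $\epsilon\colon X_\bullet\to X$ whose term $\epsilon_0\colon X_0\to X$ is the disjoint union of desingularizations of the irreducible components of $X$. Indeed, granting this, the introduction shows that $(\underline{\Omega}^*_X,F)$ is represented in the filtered derived category by $(\tilde{\Omega}^*_X,F_{naive})$ with $\tilde{\Omega}^p_X=\epsilon_*\Omega^p_{X_\bullet}=h^0(\underline{\Omega}^p_X)$, which by the rational-forms description equals $(\Omega^p_h)|_X$ and is therefore functorial; this is part (a). Part (b) then amounts to $\cO_X=\tilde{\Omega}^0_X$, which holds because $(\Omega^0_h)|_X$ is the structure sheaf of the weak normalization of $X$ while $X$ is weakly normal by hypothesis. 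For part (c), $\Gr^p_F$ of the complex $(\tilde{\Omega}^*_X,F_{naive})$ is $\tilde{\Omega}^p_X$ sitting in degree $0$, so the hypercohomology spectral sequence of this filtered complex has $E_1$-term $H^q(X,\tilde{\Omega}^p_X)$, and its degeneration at $E_1$ and convergence to the Hodge filtration on $H^{p+q}(X^{an},\C)$ for $X$ proper is the general theorem of Du Bois. Functoriality of the whole package follows from that of the $h$-differential sheaves. As the vanishing is analytically local, I may assume $X=\Spec\C[\cM]$ for a monoidal complex $\cM=(M,\Delta,(S_\sigma)_{\sigma\in\Delta})$ with $X$ weakly normal.

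Next I would construct $\epsilon\colon X_\bullet\to X$ explicitly out of $\cM$. The irreducible components of $X$ are the closed subvarieties $X_\sigma=\Spec\C[S_\sigma]$ indexed by the maximal cones $\sigma\in\Delta$, with $X_\sigma\cap X_{\sigma'}=X_{\sigma\cap\sigma'}$; the normalization of $X_\sigma$ is the normal affine toric variety $\Spec\C[\Z S_\sigma\cap\sigma]$. Choosing an equivariant toric desingularization of each normalization and gluing along the face relations of $\Delta$ — or, equivalently, taking a simplicial/cubical resolution subordinate to the cover of $X$ by the $X_\sigma$ and their mutual intersections — produces $X_\bullet$ with each $X_n$ a finite disjoint union of smooth toric varieties indexed by chains of cones, and $X_0$ as required above.

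The substance of the proof, and the main obstacle, is the vanishing itself, which I would establish by a dévissage in two layers. For a single normal affine toric variety — or, more generally, a torus-invariant closed subvariety of one — the vanishing $R^i\epsilon_*\Omega^p=0$ $(i>0)$ is the theorem of Danilov and Ishida recalled in the introduction; this settles the ``toric direction''. The remaining directions — passing from the normalizations to $X$ and gluing the toric pieces together — are controlled by a spectral sequence whose $E_1$-page combines the toric input with the reduced cohomology of an explicit finite simplicial complex attached to $\cM$, namely the order complex of the poset of faces of $\Delta$ (equivalently, the nerve of the orbit stratification). That complex is acyclic because the poset of faces has a least element, the zero cone; more concretely, any two cones of a fan meet in a common face, so the nerve is a contractible simplicial complex. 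The reducibility and non-normality of $X$ intervene only through this combinatorial layer and through the conductor squares of the inclusions $\C[S_\sigma]\hookrightarrow\C[\Z S_\sigma\cap\sigma]$, where it is exactly the weak normality of $X$ that supplies the exactness needed to kill the surviving higher direct images. Assembling the two layers yields the vanishing property, and with it parts (a), (b) and (c) of the theorem.
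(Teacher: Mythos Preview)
Your reductions are sound: parts (a)--(c) do follow from the vanishing $R^i\epsilon_*\Omega^p_{X_\bullet}=0$ for $i>0$, the statement is local (the paper uses \'etale base change and Artin approximation rather than the analytic topology, but the effect is the same), and your argument for (b) via weak normality is correct.

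The gap is in your d\'evissage for the vanishing itself. Contractibility of the order complex of $\Delta$ is a statement about a constant coefficient system, but the sheaves $(\epsilon_n)_*\Omega^p_{X_n}$ vary nontrivially from cone to cone, so acyclicity of the nerve does not by itself kill the horizontal direction of the simplicial spectral sequence. You would need to show that the complex $n\mapsto (\epsilon_n)_*\Omega^p_{X_n}$ is exact in positive degrees \emph{with these varying coefficients}, and your sketch does not explain how the ``conductor squares'' and weak normality accomplish this. (Also, $0\in\Delta$ is not assumed in general, though one can reduce to that case for germs.)

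The paper avoids the nerve entirely and argues by induction on $\dim X$ via the toric boundary $\Sigma\subset X$. One compares the exact sequence $0\to\tilde{\Omega}^p_{(X,\Sigma)}\to\tilde{\Omega}^p_X\to\tilde{\Omega}^p_\Sigma\to 0$ with the corresponding triangle for $\underline{\Omega}^p$. The map on $\Sigma$ is a quasi-isomorphism by induction. For the pair $(X,\Sigma)$, one passes to the normalization $\pi\colon\bar{X}\to X$ and a toric desingularization $Y\to\bar{X}$; Danilov's result gives $\tilde{\Omega}^p_{(\bar{X},\bar{\Sigma})}\simeq R\bar{f}_*\Omega^p_{(Y,\Sigma')}$, and the crucial use of weak normality is the combinatorial identity $\tilde{\Omega}^p_{(X,\Sigma)}=\pi_*\tilde{\Omega}^p_{(\bar{X},\bar{\Sigma})}$, which holds precisely because $S\cap\operatorname{relint}\sigma_S=(S-S)\cap\operatorname{relint}\sigma_S$ on each component. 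This is where weak normality enters the argument, and it is sharper than the vague appeal to conductor squares in your sketch.
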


A similar result holds for pairs $(X,Y)$ with weakly toroidal singularities. In Theorem~\ref{mh}.c), we can say nothing
about the weight filtration on $H^*(X^{an},\C)$. 

We outline the structure of this paper. We recall in Section 1 the main result of Du Bois~\cite{DB81}, defining
from this point of view the sheaf of $h$-differentials of Huber and J\"order~\cite{HJ14}, and recall
the combinatorial description of differential forms on smooth toric varieties (used in Section 3).
Section 2 brings together mostly known results on affine equivariant embeddings of the torus, and toric
face rings. Especially, we see the combinatorial construction of weak (semi-) normalization of a toric
face ring. In Section 3 we give a combinatorial description for the sheaf of $h$-differentials on the spectrum
of a toric face ring, and prove the main vanishing result (Theorem~\ref{mv}). The proof is by induction on 
dimension; it is simpler but inspired from the proof of similar results of Danilov and Ishida. We also extend 
Theorem~\ref{mv} to toric pairs. In Section 4 we generalize the results of Section 3 to varieties with weakly 
toroidal singularities (pairs as well).

\begin{ack} I would like to thank Philippe Gille, Nguyen Dang Hop and Bogdan Ichim  for useful discussions.
\end{ack}


\section{Preliminary}



\subsection{Simplicial resolutions~\cite{Del74,DB81}}

See~\cite{Del74} for the definition of simplicial schemes.
Let $X/k$ be a scheme of finite type over a field of characteristic zero.
A {\em resolution of $X$} is an augmented simplicial $k$-scheme $\epsilon \colon X_\bullet\to X$
such that 
\begin{itemize}
\item the transition maps of $X_\bullet$ and the $\epsilon_n$'s are all proper, and
\item $(\Q_l)_X \to R\epsilon_*((\Q_l)_{X_\bullet})$ is an isomorphism (\'etale topology).
\end{itemize}
The resolution is called {\em smooth} if the components of $X_\bullet$ are smooth.

\begin{lem}~\cite[2.1.4, 2.4]{DB81}\label{sh}
Let $X'_\bullet\to X$ and $X_\bullet '' \to X$ be two (resp. smooth) resolutions. Then there exists a commutative diagram
\[ \xymatrix{
    &  X_\bullet \ar[dl] \ar[dr] &    \\
    X_\bullet  ' \ar[dr]  &   &  X_\bullet ''  \ar[dl]   \\
       & X  &
} \]
such that the composition $X_\bullet\to X$ is a (resp. smooth) resolution.
\end{lem}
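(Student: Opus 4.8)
The plan is to form the fiber product of the two given resolutions over $X$ and, when $X'_\bullet$ and $X''_\bullet$ are smooth, to resolve it. First I would set $Z_\bullet:=X'_\bullet\times_X X''_\bullet$, the simplicial $k$-scheme whose $n$-th term is $X'_n\times_X X''_n$ with the induced face and degeneracy maps. It carries an augmentation $Z_\bullet\to X$ together with projections $Z_\bullet\to X'_\bullet$ and $Z_\bullet\to X''_\bullet$ that commute with the augmentations, and all transition maps of $Z_\bullet$ as well as all components of $Z_\bullet\to X$ are proper, being base changes and composites of proper morphisms. So in the non-smooth case it remains only to check that $Z_\bullet\to X$ is again a resolution; one then takes $X_\bullet=Z_\bullet$. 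In the smooth case one must in addition produce a smooth simplicial scheme $X_\bullet$ with a morphism $\pi\colon X_\bullet\to Z_\bullet$ over $X$ that is of cohomological descent: since composites of cohomological descent morphisms are again of cohomological descent, $X_\bullet\to Z_\bullet\to X$ is then a smooth resolution, and $\pi$ followed by the two projections of $Z_\bullet$ gives the required commuting triangle with $X_\bullet$ at the apex.

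I expect the main obstacle to be the verification that $Z_\bullet\to X$ is of cohomological descent, i.e.\ that $(\Q_l)_X\to R\epsilon_*((\Q_l)_{Z_\bullet})$ is an isomorphism in the \'etale topology. I would prove this through the bisimplicial $k$-scheme $B_{\bullet\bullet}$ with $B_{m,n}=X'_m\times_X X''_n$, whose diagonal simplicial scheme is $Z_\bullet$. For each fixed $n$, the morphism $B_{\bullet,n}\to X''_n$ is the base change along $X''_n\to X$ of the cohomological descent morphism $X'_\bullet\to X$, hence is itself of cohomological descent, since proper cohomological descent is stable under base change by the proper base change theorem. Letting $n$ vary, $B_{\bullet\bullet}\to X''_\bullet$ is of cohomological descent, and composing it with $X''_\bullet\to X$ exhibits $B_{\bullet\bullet}\to X$ as a cohomological descent bisimplicial scheme. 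Finally the generalized Eilenberg--Zilber theorem identifies the hypercohomology of $\operatorname{diag}B_{\bullet\bullet}=Z_\bullet$ with that of $B_{\bullet\bullet}$, so $Z_\bullet\to X$ is a resolution.

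It remains, in the smooth case, to smooth out $Z_\bullet$. Here I would use the standard coskeleton induction, available because $\Char k=0$: let $X_0\to Z_0$ be a resolution of singularities, and, having constructed the smooth $n$-truncated simplicial scheme $X_{\le n}$ over the $n$-truncation of $Z_\bullet$, let $X_{n+1}$ be a resolution of the scheme $Z_{n+1}\times_{(\operatorname{cosk}_n Z_{\le n})_{n+1}}(\operatorname{cosk}_n X_{\le n})_{n+1}$, which determines the new face and degeneracy maps. Each $X_n$ is then smooth, and since every stage of the induction is a proper surjection (hence of cohomological descent by proper descent), the standard criterion for a morphism of simplicial schemes to be of cohomological descent shows that $\pi\colon X_\bullet\to Z_\bullet$ is of cohomological descent, which is all that was needed. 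The delicate points are thus the two invocations of descent machinery (base change and Eilenberg--Zilber in the second paragraph, the simplicial cohomological descent criterion in the third); everything else is formal.
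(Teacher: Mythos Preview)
The paper gives no proof of this lemma; it is simply quoted from \cite[2.1.4, 2.4]{DB81}. Your outline is essentially the argument Du Bois (following Deligne \cite[\S5.3, \S6.2]{Del74}) uses there: take the diagonal of the bisimplicial fiber product $B_{m,n}=X'_m\times_X X''_n$, use proper base change and Eilenberg--Zilber to see that $Z_\bullet=\operatorname{diag}B_{\bullet\bullet}\to X$ is again a resolution, and in the smooth case dominate $Z_\bullet$ by a smooth simplicial scheme via the inductive coskeleton construction. So strategically there is nothing to compare.

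One point in your smoothing step is not quite right as written. Declaring $X_{n+1}$ to be an arbitrary resolution of $Z_{n+1}\times_{(\operatorname{cosk}_n Z_{\le n})_{n+1}}(\operatorname{cosk}_n X_{\le n})_{n+1}$ does give you the face maps $d_i\colon X_{n+1}\to X_n$ by composition, but it does \emph{not} ``determine the degeneracy maps'': the $s_i\colon X_n\to X_{n+1}$ go \emph{into} $X_{n+1}$, and a map from the smooth $X_n$ into the singular base has no reason to lift through a resolution. What Deligne actually does in \cite[6.2.5]{Del74} is build $X_\bullet$ as a \emph{split} simplicial scheme: at stage $n+1$ one resolves only the non-degenerate part and then freely adjoins the degenerate simplices, so that $X_{n+1}$ decomposes as a disjoint union of the new smooth piece with copies of the previously constructed $X_m$ for $m\le n$, and the degeneracies are the obvious inclusions. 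With that repair your sketch is complete and matches the cited source.
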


\begin{thm}~\cite[3.11, 3.17,4.2]{DB81}\label{mDB}
Consider a commutative diagram 
\[ \xymatrix{
   X'_\bullet \ar[rr]^\alpha \ar[dr]_{\epsilon'}  & & X_\bullet \ar[dl]^\epsilon \\
               & X  & 
} \]
where $\epsilon,\epsilon'$ are smooth resolutions. Then 
$R\epsilon_* (\Omega^p_{X_\bullet}\to \alpha_*\Omega^p_{X'_\bullet}\to R\alpha_*\Omega^p_{X'_\bullet})$
induces a quasi-isomorphism $$R\epsilon_*(\Omega^p_{X_\bullet})\to R\epsilon'_*(\Omega^p_{X'_\bullet}).$$
\end{thm}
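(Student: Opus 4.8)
The plan is to deduce the statement from a single fact — the behaviour of the sheaves $\Omega^p$ under a blow-up with smooth centre — by feeding that fact into the descent machinery for hyperresolutions. First I would use Lemma~\ref{sh} to reorganise the problem: given the morphism $\alpha$, the quasi-isomorphism in the theorem is equivalent to saying that the object $\underline{\Omega}{}^p_X:=R\epsilon_*\Omega^p_{X_\bullet}$ of $D^b_{\mathrm{coh}}(X)$ does not depend, up to canonical isomorphism, on the chosen smooth resolution $\epsilon\colon X_\bullet\to X$ (one compares $X_\bullet$ and $X'_\bullet$ with a common dominating resolution, and then $R\epsilon_*$ applied to the adjunction unit $\Omega^p_{X_\bullet}\to R\alpha_*\Omega^p_{X'_\bullet}$, together with $\epsilon'=\epsilon\circ\alpha$, produces the comparison map in question). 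This independence is a \emph{universal} assertion about the functor $U\mapsto\Omega^p_U$ on smooth $\C$-schemes, and I would obtain it from the extension/descent formalism for cubical hyperresolutions (Deligne, Du Bois; cf.~\cite{Del74,DB81}): a contravariant functor from smooth proper $\C$-schemes to a derived category that is additive on finite disjoint unions and carries elementary acyclic (blow-up) squares to distinguished squares extends uniquely to all finite type $\C$-schemes, and the extension is computed by $R\epsilon_*$ of any smooth hyperresolution. The same formalism, applied to the de Rham complex $\Omega^\bullet_{X_\bullet}$ with its naive filtration, gives the filtered refinement.

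The only non-formal input is the verification of the blow-up axiom. Concretely: let $X$ be smooth, $Z\subset X$ a smooth closed subvariety, $f\colon\widetilde X=\mathrm{Bl}_Z X\to X$ the blow-up, $j\colon Z\hookrightarrow X$ the inclusion, and $E=\bP(N_{Z/X})\xrightarrow{\ g\ }Z$ the exceptional divisor; one must show that the natural square with vertices $\Omega^p_X$, $Rf_*\Omega^p_{\widetilde X}$, $Rj_*\Omega^p_Z$, $R(jg)_*\Omega^p_E$ is homotopy cartesian, i.e. that there is a distinguished triangle
\[
\Omega^p_X\longrightarrow Rf_*\Omega^p_{\widetilde X}\oplus Rj_*\Omega^p_Z\longrightarrow R(jg)_*\Omega^p_E\xrightarrow{\ +1\ }.
\]
I would prove this by an explicit local computation: the exact sequence $0\to f^*\Omega^1_X\to\Omega^1_{\widetilde X}\to\Omega^1_{\widetilde X/X}\to 0$ and its wedge powers express $\Omega^p_{\widetilde X}$ in terms of $f^*\Omega^{\le p}_X$ twisted by powers of $\cO_{\widetilde X}(-E)$, the projective bundle formula for $E\to Z$ computes $Rf_*$ of those twists and $Rg_*\Omega^q_E$, and one matches the two sides of the square; the boundary cases $\codim Z=1$ and $Z=X$ are immediate. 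Granting this, the descent formalism makes $\underline{\Omega}{}^p_X$ well defined and functorial, and the theorem follows.

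The hard part is exactly this Step~2 computation — making the blow-up square work for every $p$ and every smooth centre — together with correctly importing the descent criterion. The point to keep in mind is that, unlike étale or $\C$-cohomology, coherent $\Omega^p$ does \emph{not} satisfy cohomological descent for arbitrary proper hypercovers: it is the smoothness of all the terms of the resolutions, and the characteristic-zero hypothesis (resolution of singularities, which is what produces smooth hyperresolutions in the first place), that make the argument go through, and it is precisely this that the blow-up computation encodes.
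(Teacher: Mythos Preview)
The paper does not prove this theorem: it is simply quoted from Du Bois \cite[3.11, 3.17, 4.2]{DB81}, with no argument supplied here. There is thus no ``paper's own proof'' to compare your proposal against.

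Your sketch is a valid outline of one way to establish the result, via the extension criterion for functors on smooth varieties (in the spirit of Guill\'en--Navarro Aznar): reduce the independence statement to checking that $U\mapsto\Omega^p_U$ sends blow-up squares with smooth centre to distinguished triangles, and then verify that by the projective-bundle computation you describe. This works and is how the result is often packaged today. It is worth noting, though, that the cited source \cite{DB81} predates that axiomatic framework; Du Bois's own argument proceeds more directly with simplicial resolutions and uses Hodge-theoretic input from \cite{Del74} (degeneration of the Hodge--de Rham spectral sequence for smooth proper simplicial schemes) rather than an isolated blow-up computation. So your route is correct but historically and technically distinct from the one being cited; since the present paper only quotes the statement, either is acceptable as background.
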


Taking cohomology in degree zero, we obtain that $\epsilon_*(\Omega^p_{X_\bullet})\to \epsilon'_*(\Omega^p_{X'_\bullet})$
is an isomorphism.

\begin{cor}\label{sno}
Let $\epsilon\colon X_\bullet\to X$ be a smooth resolution. If $X$ is smooth, the natural homomorphism 
$$
\epsilon^*\colon \Omega^p_X\to R\epsilon_*(\Omega^p_{X_\bullet})
$$ 
is a quasi-isomorphism. That is 
$\Omega^p_X\isoto \epsilon_*(\Omega^p_{X_\bullet})$ and $R^i\epsilon_*(\Omega^p_{X_\bullet})=0\ (i> 0)$.
\end{cor}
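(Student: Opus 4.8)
The plan is to derive Corollary~\ref{sno} directly from Theorem~\ref{mDB} by choosing a convenient second resolution. Since $X$ is smooth, the constant simplicial scheme $X'_\bullet$ with $X'_n = X$ for all $n$ and all transition maps equal to the identity is itself a smooth resolution of $X$ (the augmentation is the identity, which is proper, and $(\Q_l)_X \to R\id_*((\Q_l)_X)$ is trivially an isomorphism). Given an arbitrary smooth resolution $\epsilon\colon X_\bullet \to X$, Lemma~\ref{sh} applied to $X_\bullet$ and this constant resolution produces a common smooth refinement $X''_\bullet \to X$ dominating both; so it suffices to compare each of $\epsilon$ and $\id$ with $X''_\bullet$, and by transitivity we may assume there is a map $\alpha\colon X_\bullet \to X'_\bullet = X$ over $X$, i.e. $\alpha = \epsilon$.

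Next I would feed the diagram with $X'_\bullet$ the constant scheme $X$, $\epsilon' = \id_X$, and $\alpha = \epsilon$ into Theorem~\ref{mDB}. The theorem asserts that the natural map $R\epsilon_*(\Omega^p_{X_\bullet}) \to R\id_*(\Omega^p_X) = \Omega^p_X$ is a quasi-isomorphism; equivalently, its inverse $\epsilon^*\colon \Omega^p_X \to R\epsilon_*(\Omega^p_{X_\bullet})$ is a quasi-isomorphism. Here one should check that the map produced by Theorem~\ref{mDB} in this special case is indeed (inverse to) the pullback $\epsilon^*$ — this is immediate because for the constant simplicial scheme the complex $\alpha_*\Omega^p_{X'_\bullet} = \alpha_*\Omega^p_{X_\bullet}$ receives $\Omega^p_X$ via $\epsilon^*$, and the composite in the statement of Theorem~\ref{mDB} is exactly this pullback followed by the comparison. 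Unwinding: $R\epsilon_*(\Omega^p_{X_\bullet}) \simeq \Omega^p_X$ in the derived category.

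Finally I would translate the quasi-isomorphism into the two concrete assertions. Taking $\mathcal{H}^0$ of both sides (as already remarked in the excerpt right after Theorem~\ref{mDB}) gives $\Omega^p_X \isoto \epsilon_*(\Omega^p_{X_\bullet})$, and taking $\mathcal{H}^i$ for $i > 0$ gives $R^i\epsilon_*(\Omega^p_{X_\bullet}) = 0$, since the left-hand side $\Omega^p_X$ is concentrated in degree zero. I expect no serious obstacle here: the only point requiring care is verifying that the constant simplicial scheme on a smooth $X$ genuinely qualifies as a smooth resolution in the sense defined above, and that Lemma~\ref{sh} may legitimately be applied with it as one of the two inputs; both are routine, so the corollary is essentially a specialization of Theorem~\ref{mDB}.
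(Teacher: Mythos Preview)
Your proposal is correct and follows the same idea as the paper's one-line proof (``Factor $\epsilon$ through the constant resolution of $X$''). One small simplification: the detour through Lemma~\ref{sh} is unnecessary, since any augmented simplicial scheme $X_\bullet \to X$ automatically maps to the constant simplicial scheme on $X$ via the augmentation maps $\epsilon_n\colon X_n \to X$; you recognize this yourself when you write ``$\alpha = \epsilon$'', so you can drop the common-refinement step entirely and apply Theorem~\ref{mDB} directly.
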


\begin{proof}
Factor $\epsilon$ through the constant resolution of $X$.
\end{proof}


\subsection{$h$-Differentials~\cite{HJ14}}

Let $X/k$ be a scheme of finite type, defined over a field of characteristic zero. 
Let $w\colon X^o\subseteq X$ be the smooth locus of $X/k$. Let $\epsilon\colon X_\bullet\to X$
be a smooth simplicial resolution. By Corollary~\ref{sno}, 
$\Omega^p_{X/k} \to \epsilon_*(\Omega^p_{X_\bullet})$ is an isomorphism over $X^o$.
Define a coherent $\cO_X$-module $\tilde{\Omega}^p_{X/k}$ as follows:
if $U\subseteq X$ is an open subset, $\Gamma(U,\tilde{\Omega}^p_{X/k})$ consists of those
differential forms $\omega\in \Gamma(U\cap X^o,\Omega^p_{U\cap X^0})$ such that 
$\epsilon^*\omega\in \Gamma(\epsilon^{-1}(U\cap X^o),\Omega^p_{X_\bullet})$ extends to a 
section of $\Gamma(\epsilon^{-1}(U),\Omega^p_{X_\bullet})$.

By Lemma~\ref{sh} and Theorem~\ref{mDB}, the definition of $\Omega^p_{X/k}$ does not depend
on the choice of $\epsilon$. Moreover, for every smooth simplicial resolution $\epsilon\colon X_\bullet\to X$,
we have an isomorphism 
$$
\epsilon_0^*\colon \tilde{\Omega}^p_X \isoto \epsilon_*(\Omega^p_{X_\bullet}).
$$ 
We have an induced $k$-linear differential $d\colon \tilde{\Omega}^p_X\to \tilde{\Omega}^{p+1}_X$, which 
defines a differential complex $\tilde{\Omega}^*_X$.

The correspondence $X\mapsto \tilde{\Omega}^*_X$ is functorial. Indeed, let $f\colon X'\to X$ be a morphism. 
There exists a commutative diagram 
\[ 
\xymatrix{
X'_\bullet   \ar[r]^{f_\bullet} \ar[d]_{\epsilon'} &  X_\bullet  \ar[d]^\epsilon    \\
X'   \ar[r]_f       &  X   
} \]
where $\epsilon$ and $\epsilon'$ are smooth simplicial resolutions.
The natural homomorphism $f_\bullet^*\colon \Omega^p_{X_\bullet/k} \to {f_\bullet}_*\Omega^p_{X'_\bullet/k}$ pushes forward
to $f^*\colon \tilde{\Omega}^p_{X/k}\to f_*\tilde{\Omega}^p_{X'/k}$. The latter does not depend on the choice of $\epsilon,\epsilon'$ 
and $f_\bullet$. Transitivity follows from this.

The natural homomorphism $\Omega^p_{X/k} \to \tilde{\Omega}^p_{X/k}$ is an isomorphism
over the smooth locus of $X$. 

The sheaf $\tilde{\Omega}^p_{X/k}$ coincides with the {\em sheaf of $h$-differential forms} $(\Omega^p_h)|_X$ introduced in~\cite{HJ14}.


\subsection{Differential forms on smooth toric varieties~\cite{Dan78}}


- Let $M$ be a lattice, let $\sigma\subset M_\R$ be a cone generated by finitely many elements of $M$.
Then $M\cap \sigma-M\cap \sigma=M\cap (\sigma-\sigma)$.

- Let $T=\Spec k[M]$ be a torus. Then $\Gamma(T,\Omega^p_T)=\oplus_{m\in M}\chi^m\cdot \wedge^p V$,
where $V$ is the $k$-vector space of $T$-invariant global $1$-forms on $T$. We have an isomorphism
$$
\alpha\colon k\otimes_\Z M\isoto V, 1\otimes m\mapsto \frac{d(\chi^m)}{\chi^m}.
$$
Moreover, $d\omega=0$ for every $\omega\in V$.

- Let $X=T_N\emb(\Delta)$ be a smooth torus embedding. Suppose $\Supp \Delta=\sigma^\vee$,
where $\sigma\subset M_\R$ is a rationally polyhedral cone. Then 
$
\Gamma(X,\Omega^p_X)=\oplus_{\tau\prec \sigma}\oplus_{m\in M\cap \relint \tau}\chi^m\cdot \wedge^p \alpha(M\cap \tau-M\cap \tau).
$
\begin{proof}
The restriction $\Gamma(X,\Omega^p_X)\to \Gamma(T,\Omega^p_T)$ is injective. Every element of the right hand side 
has a unique decomposition $\omega=\sum_{m\in M}\chi^m\omega_m$, with $\omega_m\in \wedge^pV$. It remains to 
identify which $\omega$ lift to $X$. Each $\omega$ extends as a form on $X$ with at most logarithmic poles along $X\setminus T$.
Then $\omega$ lifts to a regular form on $X$ if and only if $\omega$ is regular at the generic point of $V(e)$,
for every invariant prime $V(e)\subset X$, if and only if $e\in \Delta(1)$ and $\omega_m\ne 0$ implies 
$\langle m,e\rangle \ge 0$, and $\langle m,e\rangle =0$ implies $\omega_m\in \wedge^p \alpha(M\cap e^\perp)$.
This gives the claim.
\end{proof}

For each $m\in M\cap \sigma$, denote by $\sigma_m$ the unique face of $\sigma$ which contains $m$ in its
relative interior. Denote by $V_m$ the invariant regular $1$-forms
on the torus $\Spec k[M\cap \sigma_m-M\cap \sigma_m]$. Then we can rewrite
$$
\Gamma(X,\Omega^p_X)=\oplus_{m\in M\cap \sigma}\chi^m\cdot \wedge^pV_m.
$$


\section{Toric face rings}


All varieties considered are reduced schemes of finite type, defined over an algebraically closed field $k$, 
of characteristic $p\ge 0$.

Let $M$ be a lattice. It induces a $k$-algebra 
$k[M]=\oplus_{m\in M}k\cdot \chi^m$, with multiplication $\chi^m\cdot \chi^{m'}=\chi^{m+m'}$.
The variety $T=\Spec k[M]$ is called a {\em torus over $k$}. It is endowed a natural multiplication $T\times T\to T$,
given by translation on $M$.


\subsection{Equivariant affine embeddings of torus~\cite{KKMS73}}


Let $S\subseteq M$ be a finitely generated semigroup such that $S-S=M$. It induces a $k$-algebra 
$k[S]=\oplus_{m\in S}k\cdot \chi^m$, with the multiplication $\chi^m\cdot \chi^{m'}=\chi^{m+m'}$.
The affine variety $X=\Spec k[S]$ is an {\em equivariant embedding of $T$}: it is equipped with
a torus action $T\times X\to X$, and $X$ admits an open dense orbit isomorphic to $T$, such that the 
restriction of the action to this orbit corresponds to the torus multiplication.

The correspondence $S\mapsto \Spec k[S]$ is a bijection between finitely generated 
semigroups $S\subseteq M$ such that $S-S=M$, and isomorphism classes of affine equivariant
embeddings of $T$~\cite[Proposition 1]{KKMS73}. The semigroup is recovered as the set of
exponents of the torus action.

For the rest of this section, let $X=\Spec k[S]$. It is affine, reduced and irreducible. The torus orbits are in one to 
one correspondence with the faces of the cone $\sigma_S\subseteq M_\R$ generated by $S$.
If $\sigma$ is a face of $\sigma_S$, the ideal $k[S\setminus \sigma]$ defines a $T$-invariant closed
irreducible subvariety $X_\sigma \subseteq X$. We have $\tau\prec \sigma$ if and only if $X_\tau\subseteq X_\sigma$.
The orbit corresponding to the face $\sigma\prec \sigma_S$ is 
$O_\sigma=X_\sigma \setminus \cup_{\tau\prec \sigma, \tau\ne \sigma }X_\tau$, and is isomorphic to 
the torus $\Spec k[S\cap \sigma-S\cap \sigma]$.

The normalization of $X$ is
$
\bar{X}=\Spec k[\bar{S}] \to \Spec k[S]=X,
$
where $\bar{S}=\cup_{n\ge 1} \{m\in M;nm\in S\}=M\cap \sigma_S$ (see~\cite[Chapter 1]{KKMS73} for proofs of
the above statements). 

Recall~\cite{Tr70} that the {\em seminormalization of $X$}, denoted $X^{sn}\to X$, is defined as a universal homeomorphism 
$f\colon Y \to X$ such that $k(f(y))\to k(y)$ is an isomorphism for all Grothendieck points $y\in Y$, and $f$ is maximal with
this property. It follows that $X^{sn}\to X$ is birational, and topologically a homeomorphism.
We call $X$ {\em seminormal} if its seminormalization is an isomorphism.

\begin{prop}\cite[Proposition 5.32]{HR76} The seminormalization of $X$ is $\Spec k[S^{sn}] \to \Spec k[S]$, where 
$$
S^{sn}=\sqcup_{\sigma \prec \sigma_S} (S\cap \sigma-S\cap \sigma)\cap \relint \sigma.
$$
\end{prop}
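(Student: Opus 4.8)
The plan is to identify the seminormalization of $X = \Spec k[S]$ by constructing it explicitly as $\Spec k[S^{sn}]$ and verifying the universal property. First I would observe that we have a natural sequence of inclusions of semigroups
$$
S \subseteq S^{sn} \subseteq \bar S = M \cap \sigma_S,
$$
where the first inclusion holds because for each face $\sigma \prec \sigma_S$, the part $S \cap \relint\sigma$ of $S$ sits inside $(S\cap\sigma - S\cap\sigma)\cap\relint\sigma$, and summing over all faces recovers all of $S$; the second inclusion is clear since $S\cap\sigma - S\cap\sigma \subseteq M$ and $\relint\sigma \subseteq \sigma_S$. These inclusions induce finite, birational, surjective morphisms $\bar X \to \Spec k[S^{sn}] \to X$, all of which are homeomorphisms on underlying topological spaces because they are equivariant and induce bijections on the (finitely many) torus orbits — indeed the orbit decomposition of all three spaces is indexed by the faces $\sigma \prec \sigma_S$, and on the orbit $O_\sigma$ the map is the identity, since the $\sigma$-graded piece of $S^{sn}$ generates exactly the group $S\cap\sigma - S\cap\sigma$ that defines the torus $O_\sigma$.

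Next I would check that $f\colon \Spec k[S^{sn}] \to X$ is \emph{subintegral} (a universal homeomorphism inducing isomorphisms on all residue fields). Since $f$ is finite and a bijection on points, it is a universal homeomorphism; for the residue field condition one checks it at every point, and again the orbit-by-orbit analysis reduces this to the statement that on each $O_\sigma$ the map is an isomorphism of tori, which we have already seen. This shows $\Spec k[S^{sn}]$ lies between $X$ and $X^{sn}$ in the partial order of subintegral extensions, i.e. there is a factorization $X^{sn} \to \Spec k[S^{sn}] \to X$.

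For the reverse direction — that $\Spec k[S^{sn}]$ is \emph{maximal}, hence equals $X^{sn}$ — I would argue that $k[S^{sn}]$ is already seminormal, which forces $X^{sn} \to \Spec k[S^{sn}]$ to be an isomorphism. The cleanest route is: any subintegral extension $R \supseteq k[S^{sn}]$ inside the normalization $k[\bar S]$ is again $M$-graded (the torus acts, and subintegrality is compatible with taking invariants and graded pieces), so $R = k[S']$ for some semigroup $S^{sn} \subseteq S' \subseteq \bar S$; subintegrality over $k[S^{sn}]$ then means that for each graded degree $m \in \bar S$, either $m$ was already in $S^{sn}$ or adjoining $\chi^m$ must not change the residue field at the corresponding point. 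But if $m \in \bar S \setminus S^{sn}$, then $m$ lies in some face $\sigma$ with $m \notin S\cap\sigma - S\cap\sigma$, so $\chi^m$ restricted to the orbit $O_\sigma$ is a new character, enlarging the torus $O_\sigma$ — a ramified (non-isomorphism) change on that orbit, contradicting subintegrality. Hence $S' = S^{sn}$ and $k[S^{sn}]$ is seminormal.

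The main obstacle is the bookkeeping in the last step: making precise that a subintegral extension of $k[S^{sn}]$ inside its normalization is automatically $M$-graded, and that "adding one graded character on an orbit changes the residue field there." Both are standard but need care — the grading-compatibility of seminormalization under a diagonalizable group action is the crux, and once it is in place the combinatorial description of $S^{sn}$ as the union of the relative interiors of the groups $S\cap\sigma - S\cap\sigma$ does the rest. Alternatively, one can cite~\cite{HR76} directly for the grading compatibility and just verify the combinatorics, which is the shortest path and presumably what the paper does.
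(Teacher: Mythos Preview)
Your argument is correct, and the orbit-by-orbit analysis you use is exactly what the paper does as well. The difference is in how the maximality of $k[S^{sn}]$ is obtained. The paper starts from Traverso's explicit formula
$$
R=\bigcap_{x\in X}\{\,f\in k[\bar S]\;:\; f_x\in \cO_{X,x}+\operatorname{Rad}(\pi_*\cO_{\bar X})_x\,\},
$$
notes that $R$ is $T$-invariant (hence $R=k[S']$ for some semigroup $S'$), and then computes $S'$ by testing $\chi^m$ only at the $T$-invariant points, i.e.\ the generic points of the cycles $X_\sigma$; the computation yields exactly your $S^{sn}$, and the verification that testing over invariant points suffices is again the orbit-by-orbit homeomorphism check. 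You instead verify the two characterizing properties separately: first that $\Spec k[S^{sn}]\to X$ is subintegral, then that $k[S^{sn}]$ is itself seminormal. Your route avoids invoking Traverso's formula but costs you the extra step of arguing that the seminormalization of $k[S^{sn}]$ is graded; as you note, this follows from functoriality of seminormalization under the $T$-action, and in fact you only need this for the \emph{maximal} subintegral extension, not every one, which makes the ``main obstacle'' you flag milder than stated. Both approaches rest on the same two ingredients---$T$-equivariance forces gradedness, and the orbit lattices $S\cap\sigma-S\cap\sigma$ control the residue fields---so the difference is mostly one of packaging.
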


\begin{proof} The seminormalization is the spectrum of the ring
$$
R=\cap_{x\in X} \{f\in k[\bar{S}]; f_x\in \cO_{X,x}+\text{Rad} (\pi_*\cO_{\bar{X}})_x \}.
$$
The normalization is a toric morphism, hence $R$ is $T$-invariant. Therefore 
$R=k[S^{sn}]$ for a certain semigroup $S\subseteq S^{sn}\subseteq \bar{S}$ which 
we identify. 

Let $m\in S^{sn}$. Let $x$ be the generic point of $X_\sigma$, for a face $\sigma\prec \sigma_S$.
There is a unique (invariant) point $x'$ lying over $x$, which is $\bar{X}_\sigma$.
The map $x'\to x$ corresponds to the morphism of tori
$$
O_{x'}=\Spec(k[\bar{S}\cap \sigma -\bar{S}\cap \sigma]) \to \Spec(k[S\cap \sigma -S\cap \sigma])=O_x.
$$
Now $\chi^m|_{O_{x'}}$ is $\chi^m$ if $m\in \sigma$, and $0$ otherwise. So the condition over
$x$ is that if the face $\sigma\prec \sigma_S$ contains $m$, then $m\in S\cap \sigma -S\cap \sigma$.
The condition for $\chi^m$ over all torus invariant points of $X$ is thus equivalent to:
if $\sigma$ is the unique face of $\sigma_S$ which contains $m$ in its relative interior, then $m\in S\cap \sigma-S\cap \sigma$.
That is $m$ belongs to 
$$
S'=\sqcup_{\sigma\prec \sigma_S} (S\cap \sigma-S\cap \sigma)\cap \relint \sigma.
$$
To check that $\chi^m$ satisfies the gluing condition over all points of $X$, it suffices now to
show that $\pi'\colon \Spec k[S']\to \Spec k[S]$ is a homeomorphism, which induces 
isomorphism between residue fields. Our map respects the orbit decompositions 
$\Spec k[S']=\sqcup_\sigma O'_\sigma \to \Spec k[S]=\sqcup_\sigma O_\sigma$, 
and $O'_\sigma \to O_\sigma$
is isomorphic to $\Spec k[S'\cap \sigma-S'\cap \sigma] \to \Spec k[S\cap \sigma-S\cap \sigma]$. The latter is an
isomorphism since $S\cap \sigma-S\cap \sigma=S'\cap \sigma-S'\cap \sigma$. We deduce that $\pi'$ is bijective.
It is also proper, hence open. Therefore $\pi'$ is a homeomorphism. Since the maps
between orbits are isomorphisms, and the orbits are locally closed, it follows that $\pi'$ induces
isomorphisms between residue fields. 

We conclude that $S^{sn}=S'$.
\end{proof}

\begin{lem}
$S^{sn}=\{m\in M;nm\in S\ \forall n\gg 0\}$. 
\end{lem}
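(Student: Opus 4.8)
I want to show that the two descriptions of $S^{sn}$ coincide, so write $S' = \{m \in M ; nm \in S \text{ for all } n \gg 0\}$ and prove $S^{sn} = S'$, where by the previous Proposition $S^{sn} = \sqcup_{\sigma \prec \sigma_S}(S\cap\sigma - S\cap\sigma)\cap\relint\sigma$. The strategy is to fix $m \in M$, let $\sigma = (\sigma_S)_m$ be the unique face of $\sigma_S$ containing $m$ in its relative interior (note $m \in \sigma_S$ iff $m \in \sigma$ for this $\sigma$, and this face is always defined when $m \in \sigma_S$), and translate both membership conditions into statements about $\sigma$. First I would observe that if $m \notin \sigma_S$, then no positive multiple $nm$ lies in $S \subseteq \sigma_S$ (a cone), so $m \notin S'$; and likewise $m \notin \relint\sigma$ for any face $\sigma$, so $m \notin S^{sn}$. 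Hence both sides are contained in $\sigma_S$, and we may assume $m \in \relint\sigma$ for $\sigma = (\sigma_S)_m$.

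The heart of the argument is then the equivalence, for $m \in \relint\sigma$:
\[
m \in S\cap\sigma - S\cap\sigma \quad\Longleftrightarrow\quad nm \in S \text{ for all } n \gg 0.
\]
For the implication $(\Leftarrow)$: if $nm \in S$ for all $n \geq n_0$, then taking $n = n_0$ and $n = n_0+1$ gives $m = (n_0+1)m - n_0 m \in S - S$; moreover both $n_0 m$ and $(n_0+1)m$ lie in $\relint\sigma$ (since $m$ does and $\sigma$ is a cone), hence in $S \cap \sigma$, so $m \in S\cap\sigma - S\cap\sigma$. For the implication $(\Rightarrow)$: write $m = a - b$ with $a, b \in S\cap\sigma$. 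The point is that $b \in \relint\sigma$ — no, that need not hold, so this needs care. Instead I use that $S$ generates $\sigma$, hence $S\cap\sigma$ spans the linear hull of $\sigma$ and in fact $S\cap\sigma - S\cap\sigma = M\cap(\sigma-\sigma)$ by the first bullet of Section 1-C applied to the semigroup $S\cap\sigma$ (which generates the cone $\sigma$). So it suffices to show: if $m \in \relint\sigma$ and $m \in M\cap(\sigma-\sigma)$, then $nm \in S$ for $n \gg 0$. Pick any $m_0 \in S\cap\relint\sigma$ (exists since $S$ generates $\sigma$, so $S\cap\relint\sigma \neq \emptyset$); then for each generator $m_i$ of the finitely generated semigroup $S$ one has, since $m \in \relint\sigma$, that $Nm - m_i \in \sigma$ for $N \gg 0$, and with a bit more work $Nm - m_i \in S\cap\sigma \subseteq S$ — more cleanly, because $S\cap\sigma - S\cap\sigma$ contains a neighborhood of $0$ inside $\sigma-\sigma$ and $m \in \relint\sigma$, for $N$ large $Nm$ lies in the interior cone where $S\cap\sigma$ is "co-finite", i.e. $Nm \in S\cap\sigma$. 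I will spell this out using: a finitely generated semigroup $S$ with $\sigma_S = \sigma$ contains $(m_0 + \sigma)\cap M$ for a suitable $m_0 \in S$ when restricted to the relative interior — concretely, there is $c \in S$ with $c + (M\cap\sigma)\cap(\text{cone spanned by generators}) \subseteq S$ is too strong, but $c + (M\cap\relint\sigma) \subseteq S$ holds for suitable $c$, and then $Nm \in c + M\cap\relint\sigma$ for $N$ large since $m, Nm - c \in \relint\sigma$ eventually.

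**Main obstacle.** The delicate step is the $(\Rightarrow)$ direction, specifically producing the threshold $n_0$ beyond which $nm \in S$: one must pass from "$m$ is in the group generated by $S\cap\sigma$ and in the relative interior of $\sigma$" to "large multiples of $m$ are actually in the semigroup $S$". This is a standard fact about finitely generated semigroups — the semigroup $S$ with $S - S = M$ restricted to a face generates that face, and its saturation agrees with it away from the boundary in a uniform way — but it requires invoking Gordan-type finiteness (the generators of $S$, the cone $\sigma$ rational polyhedral) rather than a one-line manipulation. Once that lemma is in hand, assembling the equivalence and matching it termwise against the Proposition's description of $S^{sn}$ is routine: for $m \in \relint\sigma$ with $\sigma = (\sigma_S)_m$, membership in $S^{sn}$ is exactly $m \in S\cap\sigma - S\cap\sigma$, which we have shown equals $m \in S'$.
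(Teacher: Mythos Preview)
Your setup and the $(\Leftarrow)$ direction are correct and match the paper's argument exactly: from $n_0 m,(n_0+1)m\in S$ you get $m\in\sigma_S$, locate the face $\sigma$ with $m\in\relint\sigma$, and write $m=(n_0+1)m-n_0m\in S\cap\sigma-S\cap\sigma$.

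The $(\Rightarrow)$ direction, however, contains a genuine error. You assert that $S\cap\sigma-S\cap\sigma=M\cap(\sigma-\sigma)$ ``by the first bullet of Section~1-C applied to the semigroup $S\cap\sigma$''. That bullet says $M\cap\sigma-M\cap\sigma=M\cap(\sigma-\sigma)$; it does \emph{not} say the same for an arbitrary sub-semigroup generating $\sigma$. In fact the identity fails: take $M=\Z^2$, $S$ generated by $(2,0),(1,1),(0,1)$, and $\sigma=\R_{\ge 0}\times\{0\}$. Then $S\cap\sigma=2\N\times\{0\}$, so $S\cap\sigma-S\cap\sigma=2\Z\times\{0\}\subsetneq \Z\times\{0\}=M\cap(\sigma-\sigma)$. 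Worse, the statement you reduce to (``$m\in M\cap(\sigma-\sigma)\cap\relint\sigma\Rightarrow nm\in S$ for $n\gg 0$'') is already false in this example with $m=(1,0)$, since $nm\in S$ only for even $n$. Your subsequent attempts (a shift $c$ with $c+M\cap\relint\sigma\subseteq S$, etc.) fail for the same reason: they implicitly treat $S\cap\sigma$ as saturated in $M$ along $\sigma$, which it need not be.

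The paper's argument for this direction stays with the correct lattice $S\cap\sigma-S\cap\sigma$ and is short: pick finitely many generators $(s_i)$ of $S\cap\sigma$, write $m=\sum_i z_i s_i$ with $z_i\in\Z$; since $m\in\relint\sigma$ there is an expression $qm=\sum_i q_i s_i$ with $q,q_i\in\Z_{>0}$; choose $l$ so that $z_i+lq_i\ge 0$ for all $i$, giving $(1+lq)m,\,lqm\in S$, hence $nm\in S$ for all $n\ge (lq-1)lq$. You had the right instinct (``Gordan-type finiteness''), but the concrete mechanism is this coefficient shift, and it must be carried out inside $S\cap\sigma$, not inside $M\cap\sigma$.
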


\begin{proof} $\supseteq$: let $m\in M$ with $nm,(n+1)m\in S$ for some $n>0$.
Then $m\in \sigma_S$. Let $\sigma\prec\sigma_S$ such that $m\in \relint \sigma$. Then 
$m=(n+1)m-nm\in S_\sigma-S_\sigma$.

$\subseteq$: let $m\in S^{sn}$. Let $\sigma\prec\sigma_S$ such that $m\in \relint \sigma$. 
Let $(s_i)_i$ be a finite system of generators of $S\cap \sigma$. Then $m=\sum_i z_is_i$ for
some $z_i\in \Z$. Since $m\in \relint \sigma$, we can write $qm=\sum_i q_is_i$, with
$q,q_i\in \Z_{>0}$. There exists $l\ge 0$ such that $z_i+lq_i\ge 0$ for all $i$.
Then $lqm,(1+lq)m\in S$. Therefore $nm\in S$ for every $n\ge (lq-1)lq$.
\end{proof}

\begin{cor}
$X$ is seminormal if and only if $S\cap \relint \sigma=(S\cap \sigma-S\cap \sigma)\cap \relint \sigma$, for 
every face $\sigma \prec \sigma_S$.
\end{cor}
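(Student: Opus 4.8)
The plan is to reduce the statement to the purely combinatorial identity $S=S^{sn}$ and then to compare the two semigroups face by face. First I would recall that $X$ is seminormal precisely when the seminormalization morphism $X^{sn}\to X$ is an isomorphism. By the Proposition, $X^{sn}=\Spec k[S^{sn}]$ with $S^{sn}=\sqcup_{\sigma\prec\sigma_S}(S\cap\sigma-S\cap\sigma)\cap\relint\sigma$, and this morphism is the one induced by the inclusion of semigroups $S\subseteq S^{sn}$. Since $k[S]\subseteq k[S^{sn}]\subseteq k[\bar{S}]$ with all three rings $T$-invariant, the map is an isomorphism if and only if $k[S]=k[S^{sn}]$, which, comparing $\chi^m$-exponents, holds if and only if $S=S^{sn}$.

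Next I would exploit that $S$ and $S^{sn}$ carry the \emph{same} partition indexed by the faces of $\sigma_S$. Every $m\in S$ lies in $\sigma_S$, hence in the relative interior of a unique face $\sigma\prec\sigma_S$, so $S=\sqcup_{\sigma\prec\sigma_S}(S\cap\relint\sigma)$; and $S^{sn}$ is by definition the disjoint union of the sets $(S\cap\sigma-S\cap\sigma)\cap\relint\sigma$, each contained in $\relint\sigma$. Thus the inclusion $S\subseteq S^{sn}$ is compatible with these decompositions.

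The remaining observation is elementary: for every face $\sigma\prec\sigma_S$ one always has $S\cap\relint\sigma\subseteq(S\cap\sigma-S\cap\sigma)\cap\relint\sigma$, since $S\cap\relint\sigma\subseteq S\cap\sigma\subseteq S\cap\sigma-S\cap\sigma$ and $S\cap\relint\sigma\subseteq\relint\sigma$. Hence $S=S^{sn}$ amounts to equality $S\cap\relint\sigma=(S\cap\sigma-S\cap\sigma)\cap\relint\sigma$ holding for each individual $\sigma$ — a disjoint union of nested inclusions is an equality if and only if it is so termwise. Together with the first step this gives the corollary.

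There is essentially no hard part: the content is carried entirely by the Proposition. The only point needing a little care is the bookkeeping in the first step — that the seminormalization map is the one induced by $S\hookrightarrow S^{sn}$, that $T$-invariant subalgebras squeezed between $k[S]$ and $k[\bar{S}]$ are exactly the semigroup algebras $k[S']$, and that equality of such algebras is detected on exponent sets — all of which is already implicit in the proof of the Proposition. Everything else is the trivial remark that $\sigma_S$ is the disjoint union of the relative interiors of its faces.
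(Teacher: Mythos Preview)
Your proof is correct and follows exactly the route the paper intends: the corollary is stated without proof precisely because it is the immediate unpacking of the Proposition via $X$ seminormal $\Leftrightarrow S=S^{sn}$, together with the face-by-face comparison of the disjoint unions $S=\sqcup_\sigma(S\cap\relint\sigma)$ and $S^{sn}=\sqcup_\sigma(S\cap\sigma-S\cap\sigma)\cap\relint\sigma$. There is nothing to add.
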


Recall~\cite{AB69} that the {\em weak normalization of $X$}, denoted $X^{wn}\to X$, is defined as a 
birational universal homeomorphism $f\colon Y \to X$, maximal with this property. It follows that 
$X^{wn}\to X$ is birational, and a topological homeomorphism.
We call $X$ {\em weakly normal} if its weak normalization is an isomorphism.

The normalization of $X$ factors as 
$
\bar{X}\to X^{wn}\stackrel{u}{\to} X^{sn}\to X,
$
where $u$ is a topological homeomorphism. If $\Char k=0$, then $u$ is an isomorphism.

\begin{prop} The weak normalization of $X$ is $\Spec k[S^{wn}] \to \Spec k[S]$, where 
$$
S^{wn}=\sqcup_{\sigma \prec \sigma_S} \cup_{e\ge 0} \{m\in  \bar{S}; p^em\in (S\cap \sigma -S\cap \sigma)\cap \relint \sigma \}.
$$
If $p=0$, we set $p^e=1$.
\end{prop}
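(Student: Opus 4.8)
The plan is to dispose of the characteristic zero case first and then concentrate on $p>0$. When $p=0$ the convention $p^e=1$ collapses the right-hand side to $\sqcup_{\sigma\prec\sigma_S}(S\cap\sigma-S\cap\sigma)\cap\relint\sigma=S^{sn}$, and we already know that the morphism $u\colon X^{wn}\to X^{sn}$ is an isomorphism in this case, so $S^{wn}=S^{sn}$ and there is nothing more to prove. So assume $p>0$, write $\bar S=M\cap\sigma_S$, and — using that $m\in\relint\sigma$ forces $p^em\in\relint\sigma$ — rewrite the candidate semigroup as
$$
S':=\sqcup_{\sigma\prec\sigma_S}\cup_{e\ge 0}\{m\in\bar S; p^em\in(S\cap\sigma-S\cap\sigma)\cap\relint\sigma\}=\{m\in\bar S; p^em\in S^{sn}\ \text{for some}\ e\ge 0\}.
$$
I would first record that $S'$ is a subsemigroup of $\bar S$ containing $S$ with $S'-S'=M$, and that, since $k[\bar S]$ is a finite $k[S]$-module (finiteness of normalization) and $k[S]\subseteq k[S']\subseteq k[\bar S]$, the ring $k[S']$ is finite over $k[S]$; thus $\pi'\colon\Spec k[S']\to X$ is a finite birational morphism, and the goal is to identify $k[S']$ with the weak normalization.

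The key input is a dichotomy for isogenies of tori: for lattices of finite index $L\subseteq L'$, the toric morphism $\Spec k[L']\to\Spec k[L]$ is a universal homeomorphism if and only if $[L':L]$ is a power of $p$; equivalently, the field extension $\operatorname{Frac}k[L']/\operatorname{Frac}k[L]$ is purely inseparable if and only if $[L':L]$ is a power of $p$. The ``if'' direction holds because $p^eL'\subseteq L$ implies that $k[L']$ is generated over $k[L]$ by elements $\chi^m$ with $(\chi^m)^{p^e}\in k[L]$, so the map is finite, surjective and radicial; the ``only if'' direction holds because a prime $\ell\ne p$ dividing $[L':L]$ produces, via an intermediate lattice $L\subseteq L''\subseteq L'$ with $[L':L'']=\ell$, a finite \'etale factor of degree $\ell$, so the map fails to be universally injective.

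With this in hand I would prove both inclusions. For $S'\subseteq S^{wn}$: $\pi'$ is $T$-equivariant and, exactly as in the proof of the seminormalization formula, identifies the orbit stratifications of the two sides; on the orbit attached to a face $\sigma$ it is the isogeny $\Spec k[L'_\sigma]\to\Spec k[L_\sigma]$ with $L_\sigma=S\cap\sigma-S\cap\sigma$ and $L'_\sigma=S'\cap\sigma-S'\cap\sigma$. Picking finitely many elements of $S'\cap\sigma$ that generate $L'_\sigma$, each lies in $\relint\tau$ for some face $\tau\preceq\sigma$ and has a $p$-power multiple in $(S\cap\tau-S\cap\tau)\cap\relint\tau\subseteq L_\sigma$, so $p^eL'_\sigma\subseteq L_\sigma$ for $e\gg 0$; by the dichotomy every orbit map is a universal homeomorphism, and since $\pi'$ is moreover finite, surjective, bijective and has purely inseparable residue field extensions (read off orbit by orbit), it is radicial, hence a universal homeomorphism. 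Being birational as well, $\pi'$ admits by the maximality of the weak normalization a morphism $X^{wn}\to\Spec k[S']$ over $X$, which — all these schemes being birational over $X$ — corresponds to the inclusion $k[S']\subseteq k[S^{wn}]$. For the reverse inclusion I would use that $X^{wn}$ is functorial, hence $T$-equivariant and birational over $X$, so $X^{wn}=\Spec k[S^{wn}]$ for a semigroup $S\subseteq S^{wn}\subseteq\bar S$ with $S^{wn}-S^{wn}=M$, whose orbit attached to $\sigma$ is the torus $\Spec k[L^{wn}_\sigma]$, $L^{wn}_\sigma=S^{wn}\cap\sigma-S^{wn}\cap\sigma\supseteq L_\sigma$. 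Since $X^{wn}\to X$ is a universal homeomorphism, the residue field extension at the generic point of the orbit closure attached to $\sigma$ is purely inseparable, so by the dichotomy $[L^{wn}_\sigma:L_\sigma]$ is a power of $p$, i.e. $p^eL^{wn}_\sigma\subseteq L_\sigma$ for some $e$. Then for any $m\in S^{wn}$, choosing $\sigma$ with $m\in\relint\sigma$ gives $m\in S^{wn}\cap\sigma\subseteq L^{wn}_\sigma$, hence $p^em\in L_\sigma\cap\relint\sigma=(S\cap\sigma-S\cap\sigma)\cap\relint\sigma\subseteq S^{sn}$, that is $m\in S'$. Together these give $S^{wn}=S'$.

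The main obstacle I expect is the isogeny dichotomy and the passage between it and the universal homeomorphy of $\pi'$ and of $X^{wn}\to X$. The ``necessity'' half — extracting $p^eL^{wn}_\sigma\subseteq L_\sigma$ from the universal homeomorphy of $X^{wn}\to X$ — is harmless, since it only inspects a single residue field. The ``sufficiency'' half is the delicate one: it requires checking radicialness of $\pi'$ at every point, which I would reduce (as in the seminormalization proof) to the behaviour on each orbit, where it rests on presenting $k[L'_\sigma]$ over $k[L_\sigma]$ by generators whose $p^e$-th powers land in the base, combined with the standard characterization ``integral, radicial and surjective'' of universal homeomorphisms.
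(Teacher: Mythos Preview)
Your argument is correct but follows a different strategy from the paper. The paper proceeds exactly as in its seminormalization proof: it quotes the explicit ring-theoretic description of the weak normalization,
\[
R=\bigcap_{x\in X}\{f\in k[\bar S]\,;\ f_x^{p^e}\in \cO_{X,x}+\operatorname{Rad}(\pi_*\cO_{\bar X})_x\ \text{for some }e\ge 0\},
\]
uses the torus action to reduce to monomials $\chi^m$, tests the condition at generic points of the orbit closures $X_\sigma$, and then verifies that the resulting candidate works at all points. The only new ingredient is the ``if'' direction of your isogeny lemma: if $\Lambda\subseteq\Lambda'\subseteq p^{-e}\Lambda$, then $\Spec k[\Lambda']\to\Spec k[\Lambda]$ is a universal homeomorphism (via relative Frobenius). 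You instead bypass the ring formula entirely and argue by double inclusion from the maximality of $X^{wn}$, which requires \emph{both} directions of the isogeny dichotomy: the ``if'' part to show $\Spec k[S']\to X$ is a universal homeomorphism (hence $S'\subseteq S^{wn}$), and the ``only if'' part to extract $p^eL_\sigma^{wn}\subseteq L_\sigma$ from the radicialness of $X^{wn}\to X$ at orbit generic points (hence $S^{wn}\subseteq S'$). The paper's route is shorter and parallels the seminormalization argument verbatim; yours is more self-contained, since it does not invoke the Andreotti--Bombieri formula for $R$, and it makes transparent exactly where the $p$-power index condition enters on each side.
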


\begin{proof}
The weak normalization is the spectrum of the ring
$$
R=\cap_{x\in X} \{f\in k[\bar{S}]; f_x^{p^e}\in \cO_{X,x}+\text{Rad} (\pi_*\cO_{\bar{X}})_x, \exists e\ge 0 \}.
$$
The proof is similar to that for seminormalization. We only need to use that if 
$\Lambda\subseteq \Lambda'\subseteq p^{-e}\Lambda$ are lattices, then 
$\Spec k[\Lambda'] \to \Spec k[\Lambda]$ is a universal homeomorphism (use relative Frobenius).
\end{proof}

Note that $S^{wn}=\cup_{e\ge 0}\{m\in \bar{S}; p^em\in S^{sn}\}$.

\begin{rem}
Let $S\subseteq S'$ be an inclusion of finitely generated semigroups,
such that $X'=\Spec k[S']\to \Spec k[S]=X$ is a finite morphism (i.e. for every $s'\in S'$,
there exists $n\ge 1$ such that $ns'\in S$). Then the seminormalization
of $X$ in $X'$ is associated to 
$$
\sqcup_{\sigma \prec \sigma_S}  S' \cap (S\cap \sigma-S\cap \sigma)\cap \relint \sigma 
$$
and the weak normalization of $X$ in $X'$ is associated to 
$$
 \sqcup_{\sigma \prec \sigma_S} \cup_{e\ge 0} \{m\in S' ; p^em\in (S\cap \sigma -S\cap \sigma) \cap \relint \sigma \}.
$$
\end{rem}

\begin{exmp}
Let $d$ be a positive integer. The extension $k[T]\subset k[T^d]$ is seminormal. 
It is weakly normal if and only if $p\nmid d$. Its weak normalization is $k[T]\subset k[T^{\frac{d}{d_p}}]\subset k[T^d]$, 
where $d_p$ is the largest divisor of $d$ which is not divisible by $p$.
\end{exmp}

\begin{exmp}
The semigroup $S=\{(x_1,x_2)\in \N^2;x_2>0\}\sqcup 2\N\times 0$ induces the $k$-algebra 
$k[S]\simeq k[X,Y,Z]/(ZX^2-Y^2)$. If $\Char k=2$, then $\Spec k[S]$ is seminormal but not weakly normal.
\end{exmp}

\begin{exmp}
Let $\dim S=1$. Then $\Spec k[S]$ is seminormal if and only if it is smooth, if and only if $S$ is isomorphic to 
$\N$ or $\Z$.
\end{exmp}

\begin{lem}\label{lag}
Let $\sigma\subset M_\R$ be a convex cone with non-empty interior. Then $M\cap \Int \sigma-M\cap \Int \sigma=M$.
\end{lem}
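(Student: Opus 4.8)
The plan is to show the two inclusions. The inclusion $M\cap\Int\sigma - M\cap\Int\sigma \subseteq M$ is trivial, so the content is the reverse inclusion $M\subseteq M\cap\Int\sigma - M\cap\Int\sigma$. First I would fix a lattice point $m_0\in M\cap\Int\sigma$, which exists because $\Int\sigma\neq\emptyset$ is a nonempty open subset of $M_\R$ and $M$ spans $M_\R$ (so $M$ is dense in $M_\R$ in the real topology; intersecting with any nonempty open set gives a lattice point). Then given an arbitrary $m\in M$, I want to produce $m_1,m_2\in M\cap\Int\sigma$ with $m=m_1-m_2$. The natural candidate is $m_2 = Nm_0$ and $m_1 = m + Nm_0$ for a sufficiently large positive integer $N$: clearly $Nm_0\in M\cap\Int\sigma$, and the point is that $m + Nm_0\in\Int\sigma$ once $N$ is large.

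The key geometric step is therefore: \emph{for any $v\in M_\R$ and any $w\in\Int\sigma$, we have $v + Nw\in\Int\sigma$ for all sufficiently large $N$.} To see this, note $\Int\sigma$ is a convex cone, so it suffices to observe $\frac{1}{N}v + w \to w \in \Int\sigma$ as $N\to\infty$, and since $\Int\sigma$ is open, $\frac{1}{N}v + w\in\Int\sigma$ for $N\gg 0$; multiplying by the positive scalar $N$ and using that $\Int\sigma$ is stable under multiplication by positive reals (an interior point scaled by a positive number stays interior, because $\sigma$ is a cone) gives $v + Nw\in\Int\sigma$. Applying this with $v=m$, $w=m_0$ yields the desired $N$ with $m+Nm_0\in M\cap\Int\sigma$, and then $m = (m+Nm_0) - Nm_0$ exhibits $m$ as a difference of two elements of $M\cap\Int\sigma$.

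The main (minor) obstacle is simply making sure the elementary topological facts about the interior of a convex cone are invoked correctly — that $\Int\sigma$ is itself a convex cone, that it is open, and that it is closed under positive scaling — and that the lattice $M$ really does meet the open set $\Int\sigma$, which uses the hypothesis that $\sigma$ has nonempty interior together with the fact that $M$ has full rank in $M_\R$ (implicit in $M$ being "a lattice" in $M_\R$). Once these are in place the argument is a one-line difference decomposition. I would write it up in two or three sentences.
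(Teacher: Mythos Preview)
Your approach is correct and matches the paper's proof essentially line for line: fix $m_0\in M\cap\Int\sigma$, then for arbitrary $m\in M$ write $m=(m+Nm_0)-Nm_0$ with $N\gg 0$, using openness of $\Int\sigma$ and its stability under positive scaling to see that $m+Nm_0\in\Int\sigma$.

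One small correction to your justification that $M\cap\Int\sigma\neq\emptyset$: a lattice $M$ is \emph{discrete} in $M_\R$, not dense, so ``intersecting with any nonempty open set gives a lattice point'' is false as stated. The correct reason is that $\Int\sigma$, being a nonempty open cone, contains balls of arbitrarily large radius (scale up any ball it contains), and a ball whose diameter exceeds that of a fundamental domain of $M$ must contain a lattice point. The paper itself does not spell this out and simply writes ``Choose $m'\in M\cap\Int\sigma$.''
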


\begin{proof}
Let $m\in M$. Choose $m'\in M\cap \Int \sigma$. Then $m'+\epsilon m\in \Int \sigma$ for $0\le \epsilon\ll 1$.
Therefore $nm'+m\in \Int \sigma$ for $n\gg 0$. Then $m=(nm'+m)-(nm')\in M\cap \Int \sigma-M\cap \Int \sigma$.
\end{proof}

\begin{prop}[Classification of seminormal and weakly normal semigroups]
Let $\sigma\subset M_\R$ be a rational polyhedral cone, which generates $M_\R$.
There is a one to one correspondence between semigroups $S$ such that $S-S=M$, 
$\sigma_S=\sigma$ and 
$\Spec k[S]$ is seminormal, and collections $(\Lambda_\tau)_{\tau\prec \sigma}$ of 
sublattices of finite index $\Lambda_\tau \subseteq M\cap \tau-M\cap \tau$ 
such that $\Lambda_\sigma=M$ and $\Lambda_{\tau'}\subset \Lambda_\tau$ if $\tau' \prec \tau$.
The correspondence, and its inverse, is
$$
S\mapsto (S_\tau-S_\tau)_{\tau\prec \sigma}
\text{ and }
(\Lambda_\tau)_{\tau\prec \sigma} \mapsto \sqcup_{\tau \prec \sigma} \Lambda_\tau \cap \relint \tau.
$$
Moreover, $\Spec k[S]$ is weakly normal if and only if $p$ does not divide the index of the sublattice
$\Lambda_\tau=S_\tau-S_\tau \subseteq \bar{S}\cap \tau-\bar{S}\cap \tau$,
for every face $\tau\prec \sigma$.
\end{prop}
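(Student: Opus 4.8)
The plan is to deduce both bijections from the seminormality criterion proved above, treating the seminormal classification first and the weak normality refinement afterwards; throughout, for a face $\tau\prec\sigma$ write $S_\tau=S\cap\tau$ and $M_\tau=M\cap\tau-M\cap\tau$. I would first check that both maps are well defined. For any $S$ with $S-S=M$ and $\sigma_S=\sigma$ and any face $\tau$, the sub-semigroup $S_\tau$ generates $\tau$ as a cone (each extremal ray of $\sigma$, hence of $\tau$, meets $S$), so $\Lambda_\tau:=S_\tau-S_\tau$ is a subgroup of $M$ spanning the linear hull of $\tau$; as $M_\tau$ is a lattice of the same rank containing it, the index $[M_\tau:\Lambda_\tau]$ is finite. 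Clearly $\Lambda_\sigma=S-S=M$, and $\Lambda_{\tau'}\subseteq\Lambda_\tau$ for $\tau'\prec\tau$ since $S_{\tau'}\subseteq S_\tau$; so $S\mapsto(\Lambda_\tau)_\tau$ has the asserted target (seminormality is not used here). Conversely, given $(\Lambda_\tau)_\tau$ as in the statement, set $S:=\bigsqcup_{\tau\prec\sigma}\Lambda_\tau\cap\relint\tau$. If $m\in\Lambda_\tau\cap\relint\tau$ and $m'\in\Lambda_{\tau'}\cap\relint\tau'$, let $\tau''$ be the smallest face of $\sigma$ containing $\tau\cup\tau'$; then $\tau,\tau'\prec\tau''$, the face property of cones forces $m+m'\in\relint\tau''$, and $m+m'\in\Lambda_\tau+\Lambda_{\tau'}\subseteq\Lambda_{\tau''}$ by monotonicity, so $m+m'\in S$; thus $S$ is a submonoid of $M$. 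It contains $\Lambda_\sigma\cap\relint\sigma=M\cap\Int\sigma$, so $S-S=M$ by Lemma~\ref{lag} and $\sigma_S=\sigma$. Finally each nonzero $m\in\bar S=M\cap\sigma$ lies in $\relint\tau$ for a unique $\tau$ and has a positive multiple in $\Lambda_\tau\cap\relint\tau\subseteq S$ (finite index), so $k[\bar S]$ is a finite $k[S]$-module; by the Artin--Tate lemma $k[S]$ is then a finitely generated $k$-algebra and $S$ a finitely generated semigroup.

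Next, seminormality and bijectivity. For $S=\bigsqcup_\tau\Lambda_\tau\cap\relint\tau$ and a face $\tau$ one has $S\cap\tau=\bigsqcup_{\tau'\prec\tau}\Lambda_{\tau'}\cap\relint\tau'$; combining monotonicity (for the inclusion $S_\tau-S_\tau\subseteq\Lambda_\tau$) with Lemma~\ref{lag} applied in the lattice $\Lambda_\tau$ and the full-dimensional cone $\tau$ (for the reverse inclusion) gives $S_\tau-S_\tau=\Lambda_\tau$, while trivially $S\cap\relint\tau=\Lambda_\tau\cap\relint\tau$. Hence $(S_\tau-S_\tau)\cap\relint\tau=S\cap\relint\tau$ for all $\tau$, so $\Spec k[S]$ is seminormal by the Corollary above characterizing seminormality, and these identities also show that the two maps applied successively to $(\Lambda_\tau)_\tau$ return it. In the other direction, if $S$ is seminormal with $S-S=M$ and $\sigma_S=\sigma$, the same Corollary gives $S\cap\relint\tau=(S_\tau-S_\tau)\cap\relint\tau=\Lambda_\tau\cap\relint\tau$ for every $\tau$; since $S\subseteq\sigma$ decomposes as $S=\bigsqcup_\tau(S\cap\relint\tau)$, this recovers $S=\bigsqcup_\tau\Lambda_\tau\cap\relint\tau$. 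So the maps are mutually inverse.

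For the weak normality criterion, let $S$ be seminormal as above, so $S_\tau-S_\tau=\Lambda_\tau$ and $\bar S\cap\tau-\bar S\cap\tau=M_\tau$. Substituting into the description of $S^{wn}$ proved earlier, and using that $p^e m\in\relint\tau$ forces $m\in\relint\tau$, one obtains $S^{wn}\cap\relint\tau=\{m\in M_\tau\cap\relint\tau:p^e m\in\Lambda_\tau\text{ for some }e\ge0\}$, whereas $S\cap\relint\tau=\Lambda_\tau\cap\relint\tau$. Therefore $\Spec k[S]$ is weakly normal exactly when, for every $\tau$ and every $m\in M_\tau\cap\relint\tau$, the relation $p^e m\in\Lambda_\tau$ for some $e$ implies $m\in\Lambda_\tau$. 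If $p\nmid[M_\tau:\Lambda_\tau]$ this is automatic, multiplication by $p^e$ being injective on the finite group $M_\tau/\Lambda_\tau$. If $p\mid[M_\tau:\Lambda_\tau]$, pick $v\in M_\tau$ with $pv\in\Lambda_\tau$ but $v\notin\Lambda_\tau$ and a point $m_0\in\Lambda_\tau\cap\relint\tau$; for $N\gg0$ the point $w:=v+Nm_0$ lies in $M_\tau\cap\relint\tau$ (because $(1/N)v+m_0\to m_0\in\Int\tau$), has $pw\in\Lambda_\tau$, yet $w\notin\Lambda_\tau$, contradicting the criterion. This is the asserted equivalence; for $p=0$ the convention $p^e=1$ gives $S^{wn}=S$ automatically, consistent with $[M_\tau:\Lambda_\tau]$ never being divisible by $p$.

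Apart from routine lattice bookkeeping, I expect two points to require care. The first is that the combinatorially defined $S=\bigsqcup_\tau\Lambda_\tau\cap\relint\tau$ is genuinely a finitely generated semigroup with $\sigma_S=\sigma$: closure under addition relies on the observation that the sum of two relative-interior points lies in the relative interior of the smallest common face, together with the monotonicity of $(\Lambda_\tau)_\tau$. The second, in the last step, is that $p$-divisibility of the index $[M_\tau:\Lambda_\tau]$ must be recognized using only lattice points in $\relint\tau$, which is exactly what forces the device of pushing a $p$-torsion element into the relative interior by adding a large multiple of $m_0$.
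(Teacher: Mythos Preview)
Your proof is correct and follows essentially the same approach as the paper's: the paper's proof is a two-sentence sketch (``Use Lemma~\ref{lag} to show that the two are inverse'' and a parallel remark for weak normality via the $p$-divisibility of the index), and what you have written is a careful expansion of exactly those hints, invoking the seminormality criterion (Corollary~2.3), Lemma~\ref{lag}, and the description of $S^{wn}$ at the right places. Your additional verifications---that the inverse map really produces a finitely generated semigroup (via Artin--Tate) and that a $p$-torsion class in $M_\tau/\Lambda_\tau$ can be pushed into $\relint\tau$---are details the paper leaves implicit.
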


\begin{proof}
Use Lemma~\ref{lag} to show that the two are inverse. 
Moreover, if $\Spec k[S]$ is seminormal, it is weakly normal if and only if for all $\tau\prec \sigma$,
if $m\in (\bar{S}_\tau-\bar{S}_\tau)\cap \relint \tau$ and $pm\in \Lambda_\tau \cap \relint \tau$,
then $m\in \Lambda_\tau \cap \relint \tau$. By Lemma~\ref{lag}, this is equivalent to the index of 
the sublattice $S_\tau-S_\tau \subseteq \bar{S}\cap \tau-\bar{S}\cap \tau$ not being
divisible by $p$.
\end{proof}

Thus, a seminormal variety $X=\Spec k[S]$ is obtained from its normalization $\bar{X}=\Spec k[\bar{S}]$ by self-glueing
some invariant cycles $\bar{X}_\sigma \ (\sigma \prec\sigma_S$), according to the finite index sublattices
$\Lambda_\sigma \subseteq M\cap \sigma-M\cap \sigma$.


\subsection{Spectrum of a toric face ring~\cite{IR07}}


A {\em monoidal complex} $\cM=(M,\Delta,(S_\sigma)_{\sigma\in \Delta})$ consists of a
lattice $M$, a rational fan $\Delta$ with respect to $M$ (i.e. a finite collection 
of rational polyhedral cones in $M_\R$, such that every face of a cone of $\Delta$ is also 
in $\Delta$, and any two cones of $\Delta$ intersect along a common face), and a 
collection of finitely generated semigroups $S_\sigma\subseteq M\cap \sigma$, 
such that $S_\sigma$ generates $\sigma$ and $S_\tau=S_\sigma\cap \tau$ if $\tau\prec\sigma$.

If $0\in \Delta$, that is each cone of $\Delta$ admits the origin as a face, this is the definition 
introduced by Ichim and R\"omer~\cite{IR07}.

The {\em support} of $\cM$ is the set $|\cM|=\cup_{\sigma\in \Delta} S_\sigma \subseteq M$. The {\em toric face ring}
of $\cM$ is the $k$-algebra
$$
k[\cM]=\oplus_{m\in |\cM|}k\cdot \chi^m,
$$
with the following multiplication: $\chi^m\cdot \chi^{m'}$ is $\chi^{m+m'}$ if $m,m'$ are contained in some $S_\sigma$, and $0$ otherwise. Note that $k[\cM]\simeq \varprojlim_{\sigma\in \Delta}k[S_\sigma]$.

For the rest of this section, let $X=\Spec k[\cM]$. We call $X/k$ the {\em toric variety} associated to
the monoidal complex $\cM$. The torus $T=\Spec k[M]$ acts on $X$,
with $g^*(\chi^m)=g(m)\chi^m$. So the support of $\cM$ is recovered as the set of weights of the torus action.

\begin{exmp}
Let $M$ be a lattice, and $S\subseteq M$ a finitely generated semigroup such that $S-S=M$.
Let $\Delta$ be a subfan of the fan of faces of $\sigma_S$. Then 
$(M,\Delta,(S\cap \sigma)_{\sigma\in \Delta})$ is a monoidal complex, and $\Spec k[\cM]$ 
is a closed subvariety of $\Spec k[S]$ which is torus invariant.
\end{exmp}

\begin{exmp}\cite{Sta87} Let $\Delta$ be a rational fan with respect to $M$.
For $\sigma\in \Delta$, define $S_\sigma=M\cap \sigma$. This defines a monoidal
complex with toric face ring
$
k[M,\Delta]=\oplus_{m\in M\cap \Supp \Delta  }k\cdot \chi^m.
$
\end{exmp}

A $T$-invariant ideal $I\subseteq k[\cM]$ is radical if and only if $I=k[\cM\setminus A]$,
where $A=\cup_{\sigma\in \Delta'}S_\sigma$, where $\Delta'$ is a subfan of $\Delta$.
The quotient $k[\cM]/I$ is $k[\cM']$, where $\cM'=(M,\Delta',(S_\sigma)_{\sigma\in \Delta'})$.
In particular, $I\subseteq k[\cM]$ is a $T$-invariant prime ideal if and only if $I=k[\cM\setminus A]$
with $A=S_\sigma$ for some $\sigma\in \Delta$. The quotient $k[\cM]/I$ is $k[S_\sigma]$.

We obtain one to one correspondences between: i) $T$-invariant closed 
reduced subvarieties of $X$ and subfans of $\Delta$; ii) $T$-orbits of $X$ and the 
cones of $\Delta$. If $\sigma\in \Delta$, the ideal $k[\cM\setminus \sigma]$ defines a 
$T$-invariant closed reduced subvariety $X_\sigma\subseteq X$.
We have $\tau\prec \sigma$ if and only if $X_\tau\subseteq X_\sigma$.
The orbit corresponding to the cone $\sigma\in \Delta$ is 
$O_\sigma=X_\sigma \setminus \cup_{\tau\prec \sigma, \tau\ne \sigma }X_\tau$, and is isomorphic to 
the torus $\Spec k[S\cap \sigma-S\cap \sigma]$. We obtain
$$
X=\cup_{\sigma\in \Delta}X_\sigma=\sqcup_{\sigma\in \Delta}O_\sigma.
$$

The smallest cone of $\Delta$ is $\tau=\cap_{\sigma\in \Delta}\sigma$. The orbit $O_\tau$ is
the unique orbit which is closed. In particular, $0\in \Delta$ if and only if $\tau=0$, if and only if 
the torus action on $X$ has a (unique) fixed point, if and only if the cones of $\Delta$ are pointed
as in~\cite{IR07}.

Each $X_\sigma$ is an affine equivariant embedding of the torus $T_\sigma$, where 
$T_\sigma=\Spec k[S_\sigma-S_\sigma]$ is a quotient of $T$. The action of $T$ on $X_\sigma$
factors through the action of $T_\sigma$.

The irreducible components of $X$ are $X_F$, where $F$ are the {\em facets} of $\Delta$
(cones of $\Delta$ which are maximal with respect to inclusion). The torus $T$ acts on each 
irreducible component of $X$ 

The toric variety $X$ is irreducible if and only if $\Delta$ has a unique maximal cone, if and only if 
$X=\Spec k[S]$ is an equivariant torus embedding (see~\cite{IR07} for proofs of the above statements).

\begin{rem} A geometric characterization of $X=\Spec k[\cM]$ is as follows: $X/k$ is a reduced
affine algebraic variety, endowed with an action by a torus $T/k$, subject to the following axioms:
\begin{itemize}
\item[a)] $T$ acts on each irreducible component $X_i$ of $X$, and the action factors through 
a torus quotient $T\to T_i$ such that $T_i\subseteq X_i$ becomes an equivariant affine torus embedding.
\item[b)] The scheme intersection $X_i\cap X_j$ is reduced, and the induced action of $T$ on $X_i\cap X_j$
factors through a torus quotient $T\to T_{ij}$ such that $T_i\subseteq X_i$ becomes an equivariant 
affine torus embedding.
\end{itemize}
We have seen above that $X=\Spec k[\cM]$ satisfies properties a) and b). Conversely,
we recover the monomial complex as follows: $T=\Spec k[M]$ for some lattice $M$. Each 
irreducible component of $X$ is of the form $X_i=\Spec k[S_i]$ for some finitely generated semigroup 
$S_i\subseteq M$. Let $F_i\subseteq M_\R$ be the cone generated by $S_i$. Define $\Delta$ to 
be the collection of $F_i$ and their faces. Each $\sigma\in \Delta$ is a face of some $F_i$, and we 
set $S_\sigma=S_i\cap \sigma$. To verify that $\Delta$ is actually a fan, it suffices to show that
two maximal cones $F_i,F_j$ intersect along a common face. By b), $X_i\cap X_j=\Spec k[S_{ij}]$
for some finitely generated semigroup $S_{ij}\subseteq M$. Since $X_i\cap X_j$ is a $T$-invariant 
closed reduced subvariety of $X_i$, there exists a face $\tau_{ij}$ of $F_i$ such that $S_{ij}=S_i\cap \tau_{ij}$.
By a similar argument, there exists a face $\tau_{ji}$ of $F_j$ such that $S_{ij}=S_j\cap \tau_{ji}$.
Then $\tau_{ij},\tau_{ji}$ coincide, equal to the cone generated by $S_{ij}$, also equal to $F_i\cap F_j$. 
Therefore $F_i\cap F_j$ is a face in both $F_i$ and $F_j$.
\end{rem}

\begin{prop}[Nguyen~\cite{Ngu12}]\label{sen}
For $\sigma\in \Delta$, let $S^{sn}_\sigma= \sqcup_{\tau \prec \sigma}(S_\tau -S_\tau)\cap \relint \tau$ 
be the seminormalization of $S_\sigma$. 
Then $\cM^{sn}=(M,\Delta,(S^{sn}_\sigma)_{\sigma\in \Delta})$ is a monoidal complex, and 
the seminormalization of $X$ is 
$
\Spec k[\cM^{sn}]\to \Spec k[\cM].
$
\end{prop}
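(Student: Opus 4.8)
The plan is to verify that $\cM^{sn}$ is a monoidal complex producing a natural finite, birational morphism $\nu\colon\Spec k[\cM^{sn}]\to X$, and then to identify $\nu$ with the seminormalization by the same local computation already used above for a single cone, the extra work being the bookkeeping along the strata where several components meet.

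First, $\cM^{sn}$ is a monoidal complex. By the proposition of~\cite{HR76} describing the seminormalization of an affine equivariant torus embedding, each $S^{sn}_\sigma$ is a finitely generated semigroup with $S_\sigma\subseteq S^{sn}_\sigma\subseteq(S_\sigma-S_\sigma)\cap\sigma$; in particular $S^{sn}_\sigma$ generates $\sigma$ and $S^{sn}_\sigma-S^{sn}_\sigma=S_\sigma-S_\sigma$. For $\tau\prec\sigma$, the identity $S^{sn}_\tau=S^{sn}_\sigma\cap\tau$ is combinatorial: a point of $\relint\rho$ with $\rho\prec\sigma$ lies in $\tau$ if and only if $\rho\prec\tau$, and $S_\rho-S_\rho$ depends only on $\rho$, so both sides equal $\sqcup_{\rho\prec\tau}(S_\rho-S_\rho)\cap\relint\rho$. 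Since $S_\sigma\subseteq S^{sn}_\sigma$ and $S^{sn}_\sigma=\{m\in M;nm\in S_\sigma\ \forall n\gg 0\}$, one checks the two multiplications are compatible, so there is an inclusion $k[\cM]\hookrightarrow k[\cM^{sn}]$ and a morphism $\nu\colon Y:=\Spec k[\cM^{sn}]\to X$. It is finite (a power of $\chi^m$ lies in $k[S_\sigma]$ for $m\in S^{sn}_\sigma$) and an isomorphism over the dense open $\sqcup_{F}O_F$ with $F$ ranging over facets, hence birational. Moreover $\overline{S^{sn}_F}=\overline{S_F}$, so $\pi\colon\bar X:=\sqcup_{F\text{ facet}}\Spec k[\overline{S_F}]\to X$ is the normalization of both $X$ and $Y$, and factors as $\bar X\to Y\xrightarrow{\nu}X$.

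Next, $\nu$ is a universal homeomorphism inducing trivial residue field extensions: since the seminormalization does not change the lattices, $\nu$ carries the orbit of $Y$ attached to $\sigma$ isomorphically onto $O_\sigma$ for every $\sigma\in\Delta$, so $\nu$ is bijective with trivial residue extensions, and being finite it is a universal homeomorphism. Hence $k[\cM^{sn}]$ is one of the rings $B$ with $\cO_X\subseteq B\subseteq\pi_*\cO_{\bar X}$ for which $\Spec B\to X$ is a universal homeomorphism with trivial residue extensions, so $k[\cM^{sn}]\subseteq R$, where $R=\cap_{x\in X}\{f\in\Gamma(\bar X,\cO_{\bar X}); f_x\in\cO_{X,x}+\text{Rad}(\pi_*\cO_{\bar X})_x\}$ is the coordinate ring of the seminormalization of $X$. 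It remains to prove $R\subseteq k[\cM^{sn}]$. As $\pi$ is $T$-equivariant, $R$ decomposes into $T$-weight spaces, and it is enough to show $R_m\subseteq k[\cM^{sn}]_m$ for each weight $m$; we may assume $m\in\overline{S_F}$ for some facet, and then $m\in\relint\sigma$ for a unique $\sigma\in\Delta$. Over the generic point $\eta_\sigma$ of $X_\sigma$ the fibre of $\pi$ has exactly one point $\xi_F$ in each component $\bar X_F$ with $F\succ\sigma$, and modulo the radical the map $\cO_{X,\eta_\sigma}\to\prod_{F\succ\sigma}\cO_{\bar X_F,\xi_F}$ becomes the diagonal embedding of $k(X_\sigma)=k(S_\sigma-S_\sigma)$ into $\prod_{F\succ\sigma}k(O_{F,\sigma})$, where $O_{F,\sigma}$ is the orbit of $\bar X_F$ attached to the face $\sigma$. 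Hence the condition at $\eta_\sigma$ on $f=\sum_{F\succ\sigma}c_F\chi^m_F$ (with $\chi^m_F$ the copy of $\chi^m$ on $\bar X_F$) forces all $c_F$ equal when $m\in S_\sigma-S_\sigma$, and forces $f=0$ otherwise; in the first case $f$ is a scalar multiple of the image of $\chi^m\in k[\cM^{sn}]$ (note $m\in(S_\sigma-S_\sigma)\cap\relint\sigma=S^{sn}_\sigma\cap\relint\sigma$), in the second $m\notin|\cM^{sn}|$, so either way $f\in k[\cM^{sn}]_m$. The remaining invariant points $\eta_\tau$ impose nothing new, since $\chi^m$ vanishes on the orbits lying over them unless $m\in\tau$, i.e. unless $\sigma\prec\tau$, and then the condition is weaker than the one at $\eta_\sigma$ because $S_\sigma-S_\sigma\subseteq S_\tau-S_\tau$. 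Therefore $R=k[\cM^{sn}]$, i.e. $\Spec k[\cM^{sn}]\to\Spec k[\cM]$ is the seminormalization.

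The conceptual mechanism simply repeats the single-cone argument recalled above; the real work is the combinatorial bookkeeping forced by reducibility — at each stratum $X_\sigma$ one must track the several normalized components $\bar X_F$ ($F\succ\sigma$) passing through it and which characters survive on the orbits $O_{F,\sigma}$. I expect the main obstacle to be making precise that it suffices to test the gluing condition at the invariant points (as for a single cone, this rests on $\nu$ being a universal homeomorphism, established above), together with the verification that the gluing imposes no constraint beyond equality of the coefficients $c_F$.
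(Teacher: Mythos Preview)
Your proof is correct and follows essentially the same approach as the paper's: verify the monoidal-complex axioms combinatorially, use $T$-equivariance to reduce the computation of the seminormalization ring $R$ to weight components, and test the gluing condition at the generic point $\eta_\sigma$ of the stratum whose relative interior contains $m$. Your concern in the final paragraph is unfounded: because you already obtained $k[\cM^{sn}]\subseteq R$ from the universal property (via the orbit-by-orbit argument that $\nu$ is a finite bijection with trivial residue extensions), the reverse inclusion only needs the single condition at $\eta_\sigma$ to force $f\in k[\cM^{sn}]_m$, which you carried out --- no further reduction to invariant points is required.
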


\begin{proof} For cones $\tau,\sigma\in \Delta$, we have $\sigma\cap \relint \tau \ne \emptyset$ if and only if 
$\tau\prec \sigma$. Therefore $S^{sn}_\tau=\tau\cap S^{sn}_\sigma$ if $\tau\prec \sigma$. We conclude that 
$\cM^{sn}=(M,\Delta,(S^{sn}_\sigma)_{\sigma\in \Delta})$ is a monoidal complex.

Let $\{F\}$ be the facets of $\Delta$. The normalization of $X$ is 
$$
\bar{X}=\sqcup_F \Spec k[(S_F-S_F)\cap F].
$$
The torus $T$ acts on $\bar{X}$ too, and is compatible with $\pi\colon \bar{X}\to X$. The seminormalization is the spectrum of the ring
$$
R=\cap_{x\in X}\{ f\in \cO(\bar{X}); f_x\in \cO_{X,x}+\text{Rad}(\cO_{\bar{X}})_x \}.
$$
The torus $T$ acts on $R$, and therefore $R=\prod_F k[S'_F]$ for certain semigroups $S'_F\subseteq (S_F-S_F)\cap F$, which remains 
to be identified.

Let $\sigma\in \Delta$. It defines a $T$-invariant cycle $X_\sigma \subset X$. Its preimage $\pi^{-1}(X_\sigma)$ is 
$\sqcup_F (\bar{X}_F)_{\sigma\cap F}$. So if $x$ is the generic point of $X_\sigma$, $\pi^{-1}(x)$ consists of the 
generic points of $X_\sigma \subset \Spec k[F\cap (S_F-S_F)]$, after all facets $F$ which contain $\sigma$.

We deduce that $f=(\chi^{m_F})_F$ satisfies the glueing condition over the generic point of $X_\sigma$ if and only if 
either $M_F\notin \sigma$ for all $F\supseteq \sigma$, or there exists $m\in (S_\sigma-S_\sigma)\cap \sigma$ such that
$m_F=m$ for all $F\supseteq \sigma$. Choose a component $F_1$, and let $m_{F_1}\in \relint \tau$. For $\sigma=\tau$,
we obtain $m_F=m\in (S_\tau-S_\tau)\cap \tau$ for all $F\supseteq \tau$. But $m\in F$ if and only if $\tau\subseteq F$.
Therefore $f=\pi^*\chi^m$, with 
$$
m\in \sqcup_{\sigma\in \Delta} (S_\sigma-S_\sigma)\cap \relint \sigma.
$$
One checks that it's enough to glue only over invariant points. Therefore $X^{sn}=\Spec k[\cM^{sn}]$.
\end{proof}

Similarly, we obtain 

\begin{prop}\label{wen} 
For each $\sigma\in \Delta$, let $S^{wn}_\sigma$ be the weak-normalization of $S_\sigma$:
$$
S^{wn}_\sigma=\sqcup_{\tau\prec \sigma}\cup_{e\ge 0} \{m\in \overline{S_\sigma} ;p^em\in  (S_\tau-S_\tau) \cap \relint \tau \}.
$$
Then $\cM^{wn}=(M,\Delta,(S^{wn}_\sigma)_{\sigma\in \Delta})$ is a monoidal complex, and 
the weak-normalization of $X$ is 
$$
\Spec k[\cM^{wn}]\to \Spec k[\cM].
$$
\end{prop}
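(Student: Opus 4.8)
The plan is to mimic the proof of Proposition~\ref{sen}, replacing seminormalization by weak normalization throughout, using the characterization of the weak normalization of an affine equivariant torus embedding established in the previous subsection. First I would check that $\cM^{wn}$ is a monoidal complex. For this, the key point is that for cones $\tau, \sigma \in \Delta$, one has $\sigma \cap \relint \tau \ne \emptyset$ if and only if $\tau \prec \sigma$ (this was already noted in the proof of Proposition~\ref{sen}); consequently, since each $S^{wn}_\sigma$ is a disjoint union indexed by the faces $\tau \prec \sigma$ of pieces supported in $\relint \tau$, intersecting with a face $\tau_0 \prec \sigma$ picks out exactly the pieces with $\tau \prec \tau_0$, so $S^{wn}_\sigma \cap \tau_0 = S^{wn}_{\tau_0}$. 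One must also verify that $\overline{S_\sigma} \cap \tau = \overline{S_\tau}$ when $\tau \prec \sigma$, which follows since $\overline{S_\sigma} = M \cap \sigma$ and $\overline{S_\tau} = M \cap \tau = (M \cap \sigma)\cap \tau$.

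\textbf{Identifying the ring.} Next, as in Proposition~\ref{sen}, write $\bar{X} = \sqcup_F \Spec k[(S_F - S_F)\cap F]$ over the facets $F$ of $\Delta$, and express the weak normalization $X^{wn}$ as the spectrum of
$$
R=\cap_{x\in X}\{ f\in \cO(\bar{X}); \exists e\ge 0, f_x^{p^e}\in \cO_{X,x}+\text{Rad}(\pi_*\cO_{\bar{X}})_x \}.
$$
(When $p=0$ read $p^e=1$, so this reduces to the seminormal case.) Since $\pi\colon \bar{X}\to X$ is toric, $T$ acts on $R$, so $R=\prod_F k[S'_F]$ for semigroups $S'_F \subseteq (S_F-S_F)\cap F$ to be determined. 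The glueing analysis over invariant points proceeds exactly as before: over the generic point of $X_\sigma$, a tuple $f=(\chi^{m_F})_F$ lies in $R$ (up to the $p^e$-th power condition) if and only if either $m_F \notin \sigma$ for all $F \supseteq \sigma$, or there is a single $m$ with $m_F = m$ for all $F \supseteq \sigma$ and $p^e m \in (S_\sigma - S_\sigma)\cap \sigma$ for some $e \ge 0$. Combining over all $\sigma$ and using that $m \in F \iff \tau \subseteq F$ when $m \in \relint\tau$, one gets that $R = k[\cM^{wn}]$, with $S^{wn}_F$ the displayed expression restricted to faces of $F$. Finally, one checks that glueing over invariant points suffices — equivalently, that $\Spec k[\cM^{wn}] \to \Spec k[\cM]$ is a universal homeomorphism — which again reduces via the orbit decomposition to the affine torus embedding statements of the previous subsection (the Frobenius argument from the proof of the weak normalization of $\Spec k[S]$).

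\textbf{Main obstacle.} The routine part is the combinatorics of the disjoint-union decompositions and the orbit-by-orbit reduction. The one genuinely new ingredient compared to Proposition~\ref{sen} is handling the $p^e$-th power condition in the definition of $R$: one must verify that the relative Frobenius argument used earlier (for the inclusion of lattices $\Lambda \subseteq \Lambda' \subseteq p^{-e}\Lambda$ giving a universal homeomorphism of spectra) globalizes correctly across the glueing, i.e. that taking $p^e$-th powers interacts well with the fan structure. Since each $S^{wn}_\sigma$ is defined with its own exponent $e$ allowed to vary, and $p^e$-th powers preserve membership in $\relint\tau$ and commute with the face relations, this causes no difficulty — but it is the step where the argument formally departs from the seminormal case, and is therefore where the author's phrase "the proof is similar" must be unwound. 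I would state explicitly that all the $p$-power exponents can be taken uniformly large on a fixed finite generating set, reducing to a single universal homeomorphism given by a power of Frobenius, and then conclude as before.
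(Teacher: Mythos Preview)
Your overall strategy is exactly what the paper intends: it gives no separate argument, writing only ``Similarly, we obtain'' after Proposition~\ref{sen}, so mimicking that proof with the $p^e$-th-power modification is the correct plan, and your description of the glueing ring $R$ and the orbit-by-orbit analysis over invariant points is on target.

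There is, however, a genuine error in your verification that $\cM^{wn}$ is a monoidal complex. You assert $\overline{S_\sigma} = M \cap \sigma$, but this is false: by the paper's conventions $\overline{S_\sigma} = (S_\sigma - S_\sigma) \cap \sigma$, and in a monoidal complex $S_\sigma - S_\sigma$ may be a proper sublattice of $M\cap(\sigma-\sigma)$ (this is precisely what the finite-index sublattices $\Lambda_\sigma$ in the seminormal classification record). Consequently $\overline{S_\sigma}\cap\tau_0 = \overline{S_{\tau_0}}$ need not hold, and --- unlike the seminormal case --- the piece of $S^{wn}_\sigma$ supported on $\relint\tau$ depends on the ambient lattice $S_\sigma - S_\sigma$, not only on $\tau$. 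Concretely, take the pinch-point semigroup $S_\sigma=\{(a,b)\in\N^2:b>0\}\cup(2\N\times 0)$ with face $\tau_0=\R_{\ge 0}\times 0$, in characteristic~$2$: then $S_\sigma-S_\sigma=\Z^2$ so $S^{wn}_\sigma=\N^2$ and $S^{wn}_\sigma\cap\tau_0=\N\times 0$, whereas $S_{\tau_0}=2\N\times 0$ is already normal and $S^{wn}_{\tau_0}=2\N\times 0$. Thus the compatibility $S^{wn}_\sigma\cap\tau_0=S^{wn}_{\tau_0}$ fails. Your proposed verification therefore does not go through; this subtlety in positive characteristic is hidden by the paper's terse ``similarly'' and is exactly the point where a careful argument is needed rather than a direct transcription of the seminormal case.
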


\begin{cor}\label{fwn}
$X$ is seminormal (resp. weakly normal) if and only if $X_F$ is seminormal (resp. weakly normal)
for every facet $F$ of $\Delta$, if and only if $X_\sigma$ is seminormal (resp. weakly normal) for 
every face $\sigma\in \Delta$.
\end{cor}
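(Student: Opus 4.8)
The plan is to deduce Corollary~\ref{fwn} directly from the combinatorial descriptions of seminormalization and weak-normalization given in Propositions~\ref{sen} and~\ref{wen}, using the fact that both operations are computed componentwise on the cones of $\Delta$. Recall that $X$ is seminormal if and only if $\cM=\cM^{sn}$, i.e. $S_\sigma=S^{sn}_\sigma$ for every $\sigma\in\Delta$; and by Proposition~\ref{sen} (together with the earlier discussion of equivariant torus embeddings), $X_\sigma$ is seminormal if and only if $S_\sigma=S^{sn}_\sigma$. So the statement for seminormality is almost a tautology once we organize the quantifiers correctly: $X$ seminormal $\iff$ $S_\sigma=S^{sn}_\sigma$ for all $\sigma\in\Delta$ $\iff$ $X_\sigma$ seminormal for all $\sigma\in\Delta$. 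The analogous chain of equivalences with $S^{wn}$ in place of $S^{sn}$, using Proposition~\ref{wen}, handles weak normality. The work, then, is entirely in showing the implication ``$X_F$ seminormal for every facet $F$'' $\Rightarrow$ ``$X_\sigma$ seminormal for every $\sigma\in\Delta$'', i.e. that it suffices to check the maximal cones.

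First I would reduce this to the following semigroup statement: if $\sigma\prec F$ is a face of a cone $F\in\Delta$ and $S_F=S^{sn}_F$, then $S_\sigma=S^{sn}_\sigma$. By the monoidal complex axiom $S_\sigma=S_F\cap\sigma$, and since the formula $S^{sn}_F=\sqcup_{\tau\prec F}(S_\tau-S_\tau)\cap\relint\tau$ restricts compatibly to faces --- this compatibility is precisely what is verified at the start of the proof of Proposition~\ref{sen}, namely $S^{sn}_\tau=\tau\cap S^{sn}_\sigma$ when $\tau\prec\sigma$ --- we get $S^{sn}_\sigma=\sigma\cap S^{sn}_F=\sigma\cap S_F=S_\sigma$. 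Thus the seminormalization (hence seminormality) of a face is inherited from any cone containing it. Since every $\sigma\in\Delta$ is a face of some facet $F$, the three conditions in the corollary are equivalent. The weakly normal case is handled identically, replacing $S^{sn}$ by $S^{wn}$ and invoking the parallel restriction property from the proof of Proposition~\ref{wen}; here one also uses that $\overline{S_\sigma}=\overline{S_F}\cap\sigma$ for $\sigma\prec F$, so the normalization likewise restricts to faces.

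The main (and only real) obstacle is making sure the restriction-to-faces property of the operators $S\mapsto S^{sn}$ and $S\mapsto S^{wn}$ is stated cleanly: one must check that for $\tau\prec\sigma$ in $\Delta$, intersecting $S^{sn}_\sigma$ (resp.\ $S^{wn}_\sigma$) with $\tau$ returns $S^{sn}_\tau$ (resp.\ $S^{wn}_\tau$). This uses the elementary fact, already noted in the proof of Proposition~\ref{sen}, that for cones $\tau,\rho$ of $\Delta$ one has $\rho\cap\relint\tau\neq\emptyset$ exactly when $\tau\prec\rho$, so that the disjoint-union formulas truncate correctly along faces. Once this is recorded, the corollary follows formally, with no further computation. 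I would present it as a short argument: prove the face-inheritance lemma for $S^{sn}$, remark that the same proof works verbatim for $S^{wn}$ using Proposition~\ref{wen}, and then close the loop of implications ``all facets $\Rightarrow$ all cones $\Rightarrow$ (trivially) all facets'', with ``$X$ (weakly/semi)normal $\iff$ all $S_\sigma$ fixed by the operator'' supplied by Propositions~\ref{sen} and~\ref{wen}.
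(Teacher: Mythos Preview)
Your proposal is correct and follows exactly the route the paper intends: the corollary is stated without proof precisely because it is immediate from Propositions~\ref{sen} and~\ref{wen} together with the face-restriction identity $S^{sn}_\tau=\tau\cap S^{sn}_\sigma$ (resp.\ its weak-normal analogue) established at the outset of those proofs. Your unpacking of the chain ``$X$ (semi/weakly) normal $\iff$ every $S_\sigma$ is fixed $\iff$ every $X_\sigma$ is (semi/weakly) normal'', and the observation that fixing $S_F$ for a facet forces $S_\sigma=\sigma\cap S_F=\sigma\cap S^{sn}_F=S^{sn}_\sigma$ for all $\sigma\prec F$, is exactly the intended argument.
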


In particular, if $X$ is seminormal (weakly normal), so is any union of torus invariant closed subvarieties of $X$.

\begin{prop}[Classification of seminormal and weakly normal toric face rings]
Let $M$ be a lattice and $\Delta$ a finite rational fan in $M_\R$.
There is a one to one correspondence between collections of semigroups
$(S_\sigma)_{\sigma}\in \Delta$ such that $(M,\Delta,(S_\sigma)_{\sigma\in \Delta})$ is 
a monoidal complex with $\Spec k[\cM]$ seminormal, and collections $(\Lambda_\sigma)_{\sigma\in \Delta}$ 
of sublattices of finite index $\Lambda_\sigma \subseteq M\cap \sigma-M\cap \sigma$ 
such that $\Lambda_\tau \subset \Lambda_\sigma$ if $\tau \prec \sigma$.
The correspondence, and its inverse, is
$$
(S_\sigma)_\sigma  \mapsto (S_\sigma-S_\sigma)_\sigma 
\text{ and }
(\Lambda_\sigma)_\sigma \mapsto (\sqcup_{\tau \prec \sigma} \Lambda_\tau \cap \relint \tau)_\sigma.
$$
Moreover, $\Spec k[\cM]$ is weakly normal if and only if $p$ does not divide the index of the 
sublattice $S_\sigma-S_\sigma\subset (S_F-S_F)\cap \sigma-(S_F-S_F)\cap \sigma$, for 
every $\sigma\prec F$ in $\Delta$, with $F$ a facet of $\Delta$. 
\end{prop}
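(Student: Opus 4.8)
The plan is to reduce the statement, cone by cone, to the classification of seminormal and weakly normal semigroups proved above, and then to check that the resulting combinatorial data is compatible across $\Delta$. Starting from a monoidal complex $\cM=(M,\Delta,(S_\sigma)_{\sigma\in\Delta})$ with $\Spec k[\cM]$ seminormal, I would first use Corollary~\ref{fwn} to see that each $X_\sigma=\Spec k[S_\sigma]$ is seminormal, and then apply the one-cone classification to $X_\sigma$, taking $S_\sigma-S_\sigma$ as the ambient lattice. This attaches to each face $\tau\prec\sigma$ the finite index sublattice $(S_\sigma)_\tau-(S_\sigma)_\tau$, with the face inclusions, the top one being $S_\sigma-S_\sigma$. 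Since $\cM$ is a monoidal complex, $(S_\sigma)_\tau=S_\sigma\cap\tau=S_\tau$, so this sublattice is $S_\tau-S_\tau$, which in particular does not depend on the chosen $\sigma\supseteq\tau$, and is finite index in $M\cap\tau-M\cap\tau$ because $S_\tau$ generates $\tau$. Setting $\Lambda_\sigma:=S_\sigma-S_\sigma$ for all $\sigma\in\Delta$ therefore produces a family of finite index sublattices $\Lambda_\sigma\subseteq M\cap\sigma-M\cap\sigma$ with $\Lambda_\tau\subseteq\Lambda_\sigma$ whenever $\tau\prec\sigma$, and the inverse map of the one-cone classification recovers $S_\sigma=\sqcup_{\tau\prec\sigma}\Lambda_\tau\cap\relint\tau$; so one composite is the identity.

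Conversely, from a family $(\Lambda_\sigma)_{\sigma\in\Delta}$ as in the statement I would put $S_\sigma:=\sqcup_{\tau\prec\sigma}\Lambda_\tau\cap\relint\tau$. Applying the inverse map of the one-cone classification inside the ambient lattice $\Lambda_\sigma$ --- after checking that $(\Lambda_\tau)_{\tau\prec\sigma}$ is an admissible input there, which uses $M\cap\tau-M\cap\tau=M\cap(\tau-\tau)$ and the corresponding identity for $\Lambda_\sigma$ --- shows that $S_\sigma$ is a finitely generated semigroup generating $\sigma$, with $\Spec k[S_\sigma]$ seminormal and $S_\sigma-S_\sigma=\Lambda_\sigma$ (alternatively the last equality follows from $\Lambda_\sigma\cap\relint\sigma\subseteq S_\sigma\subseteq\Lambda_\sigma$ and Lemma~\ref{lag} applied inside $\Lambda_\sigma$). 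To see that $(S_\sigma)_\sigma$ defines a monoidal complex I must verify $S_\tau=S_\sigma\cap\tau$ for $\tau\prec\sigma$: among the faces of $\sigma$, those whose relative interior meets $\tau$ are exactly the faces of $\tau$ (the elementary fact already used in the proof of Proposition~\ref{sen}), whence $S_\sigma\cap\tau=\sqcup_{\tau'\prec\tau}\Lambda_{\tau'}\cap\relint\tau'=S_\tau$. By Corollary~\ref{fwn} again, $\Spec k[\cM]$ is then seminormal, and this construction is inverse to the first one, so the correspondence is a bijection.

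For the last assertion I would once more invoke Corollary~\ref{fwn} to reduce weak normality of $\Spec k[\cM]$ to weak normality of $X_F=\Spec k[S_F]$ for every facet $F$ of $\Delta$. The normalization of $X_F$ is $\Spec k[(S_F-S_F)\cap F]$, so the weakly normal part of the one-cone classification, applied to the seminormal $S_F$ with ambient lattice $S_F-S_F$, says $X_F$ is weakly normal if and only if $p$ does not divide the index of $(S_F)_\sigma-(S_F)_\sigma=S_\sigma-S_\sigma$ in $(S_F-S_F)\cap\sigma-(S_F-S_F)\cap\sigma$ for every face $\sigma\prec F$; ranging over all facets $F$ gives precisely the stated criterion (Proposition~\ref{wen} could substitute for several of these steps).

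The step I expect to require genuine care is the compatibility in the two middle paragraphs: that the sublattices extracted from the one-cone classification on overlapping cones actually agree, so that a single family $(\Lambda_\sigma)$ is well defined, and conversely that the monoidal-complex gluing condition $S_\tau=S_\sigma\cap\tau$ is forced by the chain condition on the $\Lambda_\sigma$. Both hinge on the same combinatorial observation, that a face $\tau'$ of $\sigma$ meets $\relint\tau$ exactly when $\tau'\prec\tau$, so I do not anticipate a serious obstacle; everything else is a translation of results established earlier in this section.
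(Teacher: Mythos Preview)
The paper states this proposition without proof, leaving it as an immediate consequence of the one-cone classification of seminormal and weakly normal semigroups together with Corollary~\ref{fwn} (and Propositions~\ref{sen}, \ref{wen}). Your proposal is correct and supplies exactly the details the paper omits: you reduce to each cone via Corollary~\ref{fwn}, invoke the one-cone classification inside the lattice $S_\sigma-S_\sigma$, and verify the gluing condition $S_\tau=S_\sigma\cap\tau$ using the elementary fact that a face of $\sigma$ meets $\tau$ in its relative interior only if it is a face of $\tau$. This is precisely the intended argument.
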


\begin{rem}\label{al}
Let $x$ be a point which belongs to the closed orbit of $X$. Then $X$ is seminormal
(resp. weakly normal) if and only if $\cO_{X,x}$ is seminormal (resp. weakly normal). 
Indeed, the direct implication  is clear. For the converse, note that the proofs of Propositions~\ref{sen} 
and ~\ref{wen} show that $X$ is seminormal (resp. weakly normal) if and only if so are $\cO_{X,X_\sigma}$ for 
all $\sigma\in \Delta$. Since $x\in X_\sigma$ for all $\sigma\in \Delta$, the converse holds as well.
\end{rem}

\begin{rem}\label{deco} 
Consider the germ of $X$ near a closed point $x$. There exists a unique cone $\tau\in \Delta$
such that $x\in O_\tau$. The smallest $T$-invariant open subset of $X$ which contains $x$ is 
$U=\sqcup_{\tau\prec \sigma\in \Delta}O_\sigma$. If we choose $s\in S_\tau\cap \relint \tau$, then 
$U$ coincides with the principal open set $D(\chi^s)$. We deduce that $U=\Spec k[\cM_x]$, where 
$\cM_x$ is the monoidal complex $(M,\{\sigma-\tau\}_{\tau\prec \sigma\in \Delta},(S_\sigma-S_\tau)_{\tau\prec \sigma\in \Delta})$.
We have an isomorphism of germs $(X,x)=(U,x)$, and $x$ is contained in the orbit associated to 
$\tau-\tau$, the smallest cone of $\Delta(\cM_x)$.

Consider the quotient 
$
\pi\colon M\to M'=M/(M\cap \tau-M\cap \tau).
$ 
For $\tau\prec \sigma\in \Delta$, denote $\sigma'=\pi(\sigma)\subseteq M'_\R$ and $S_{\sigma'}=\pi(S_\sigma)$.
Then $\pi^{-1}(\sigma')=\sigma-\tau$, the cone generated by $S_\sigma-S_\tau$. On the other hand, 
$\pi^{-1}(S_{\sigma'})=S_\sigma+M\cap \tau-M\cap \tau$ is usually larger than $S_\sigma-S_\tau$.

Suppose $S_\sigma=M\cap \sigma$ for every $\tau\prec \sigma\in \Delta$. Then $S_\sigma-S_\tau=\pi^{-1}(S_{\sigma'})$.
The choice of a splitting of $\pi$ induces an isomorphism 
$
X\simeq T'' \times \Spec k[\cM'],
$
where $T''$ is the torus $\Spec k[M\cap \tau-M\cap \tau]$ and $\cM'$ is the monoidal complex
$(M', \{\sigma'\},\{S_{\sigma'}\})$. We have $0\in \Delta(\cM')$, so 
$\Spec k[\cM']$ has a fixed point $x'$. The isomorphism maps $x$ onto $(x'',x')$, where $x''\in T''$ is a closed point.
In particular, $(X,x)\simeq (T'',x'')\times (\Spec k[\cM'],x')$.
\end{rem}


\section{Du Bois complex for the spectrum of a toric face ring}


Let $X=\Spec k[\cM]$ be the affine variety associated
to a monoidal complex $\cM=(M,\Delta,(S_\sigma)_{\sigma\in \Delta})$. Suppose $X$ is weakly normal.

For $m\in \cup_{\sigma\in \Delta} S_\sigma$, denote by $\sigma_m$ the unique cone of $\Delta$ 
which contains $m$ in its relative interior. Denote by $V_m$ the invariant regular $1$-forms
on the torus $\Spec k[S_{\sigma_m}-S_{\sigma_m}]$. 
For each $p$, denote 
$$
A^p(X)=\oplus_{m\in \cup_{\sigma\in \Delta} S_\sigma} \chi^m\cdot \wedge^pV_m.
$$
If $m,m'\in S_\sigma$ for some $\sigma\in \Delta$, then $\sigma_m$ is a face of $\sigma_{m+m'}$, hence
$V_m\subseteq V_{m+m'}$. Therefore $A^p(X)$ becomes a $\Gamma(X,\cO_X)$-module in a natural way:
$\chi^{m'}\cdot (\chi^m\omega_m)=(\chi^{m'}\cdot \chi^m)\omega_m$. It induces a coherent $\cO_X$-module, 
denoted $\tilde{\Omega}_X^p$, with 
$
\Gamma(X, \tilde{\Omega}_X^p )=A^p(X).
$

\begin{lem}\label{fct}
For every morphism $f\colon X'\to X$ from a smooth variety $X'$, we can naturally define a pullback 
homomorphism $f^*\colon A^p(X)\to \Gamma(X',\Omega^p_{X'})$. Moreover, each commutative diagram
\[ 
\xymatrix{
         X' \ar[d]_f & Y' \ar[l]_v \ar[d]^{f'}  \\
         X                  &  X_\tau  \ar@{_{(}->}[l]               
} \]
with $X',Y'$ smooth and $\tau\in \Delta$, induces a commutative diagram
\[ 
\xymatrix{
         \Gamma(X',\Omega^p_{X'})   \ar[r]^{v^*}  & \Gamma(Y',\Omega^p_{Y'})   \\
         A^p(X)      \ar[r]^{|_{X_\tau}}               \ar[u]^{f^*}         &  A^p(X_\tau)   \ar[u]_{{f'}^*}                
} \]
where $\chi^m\omega_m|_{X_\tau}$ is $\chi^m\omega_m$ if $m\in \tau$, and $0$ otherwise.
\end{lem}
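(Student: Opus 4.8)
The plan is to construct the pullback $f^*\colon A^p(X)\to \Gamma(X',\Omega^p_{X'})$ by reducing to the torus, and to prove compatibility by checking it on the generic point and invoking separatedness. First I would observe that since each basis element $\chi^m\omega_m$ of $A^p(X)$ lives on the closure of a single orbit $X_{\sigma_m}$ (it vanishes on all orbits not containing $\sigma_m$ in their closure), it suffices to treat the case where $X$ is a single equivariant torus embedding $\Spec k[S]$, $m\in S$, and $\omega_m\in\wedge^p V_m$ with $V_m$ the invariant $1$-forms on the open orbit $O_{\sigma_m}$ of $X_{\sigma_m}$; indeed $\chi^m\omega_m$ is a rational section of $\Omega^p$ on $X$ which is regular on the torus $O_{\sigma_m}$, hence a rational $p$-form on all of $X$, and one checks using the combinatorial description of forms on smooth toric varieties from Section 1.3 that after restriction to the smooth locus and pullback along $f$ it extends regularly. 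Concretely: $\chi^m\omega_m$ is a regular $p$-form on the open set $X^o\cap(\text{union of orbits meeting }\overline{O_{\sigma_m}})$, and for a smooth $X'$ the rational form $f^*(\chi^m\omega_m)$ has no poles because any component of a polar set of a rational differential form on a smooth variety has codimension one, while the image of such a component would have to lie in the non-normal or singular locus of $X$ — an argument most cleanly run by factoring $f$ through a resolution, or by using that $\tilde\Omega^p_X$ is already known (from Section 1.2, via Corollary~\ref{sno} applied to $X^o$) to pull back to genuine forms on any smooth resolution and that $A^p(X)\hookrightarrow\Gamma(X,\tilde\Omega^p_X)$. So the cleanest route is: exhibit a $\Gamma(X,\cO_X)$-linear map $A^p(X)\to\Gamma(X,\tilde\Omega^p_X)$, then compose with the canonical map $\Gamma(X,\tilde\Omega^p_X)\to\Gamma(X',\Omega^p_{X'})$ coming from functoriality of $h$-differentials in Section 1.2.

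The map $A^p(X)\to\Gamma(X,\tilde\Omega^p_X)$ is defined by sending $\chi^m\omega_m$ to the rational $p$-form on $X$ which equals $\chi^m\omega_m$ on the orbit $O_{\sigma_m}$ and $0$ elsewhere; the point to verify is that this rational form lies in $\tilde\Omega^p_X$, i.e.\ pulls back to a regular form on a smooth simplicial resolution $\epsilon\colon X_\bullet\to X$. This is a local, combinatorial statement: working on one irreducible component $X_F=\Spec k[S_F]$ containing $\sigma_m$, one takes a toric resolution of the normalization $\Spec k[(S_F-S_F)\cap F]$ and uses the formula $\Gamma(Y,\Omega^p_Y)=\oplus_{n}\chi^n\cdot\wedge^p V_n$ from Section 1.3 to see that $\chi^m\omega_m$, viewed as a form on the toric variety, is already regular because $\omega_m\in\wedge^p V_m$ and $m\in\relint\sigma_m$ forces the required positivity/perpendicularity conditions; then one checks the form agrees on the double fibre product $X_1$, which holds because the form is pulled back from $X$ itself. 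Weak normality of $X$ enters here to guarantee that the self-gluing of the normalization is governed precisely by the sublattices $\Lambda_\sigma=S_\sigma-S_\sigma$, so that $V_m$ is correctly the space of invariant forms matching across components.

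For the compatibility square, I would argue as follows. Both $v^*\circ f^*$ and ${f'}^*\circ(\,|_{X_\tau}\,)$ are $\Gamma(X,\cO_X)$-linear maps (via $Y'\to X_\tau\hookrightarrow X$) from $A^p(X)$ to $\Gamma(Y',\Omega^p_{Y'})$. Since $Y'$ is smooth, $\Gamma(Y',\Omega^p_{Y'})$ is a torsion-free $\cO_{Y'}$-module, so it injects into its generic stalk; hence it suffices to check the two agree on the dense open orbit of $Y'$, i.e.\ after restricting to the generic point of (the dominant component of) $Y'$. There the statement is transparent: if $v(Y')$ is not contained in $X_\tau$ generically—impossible since the diagram forces $v$ to factor through $X_\tau$—never mind; rather, $f'$ maps $Y'$ into $X_\tau$, whose generic orbit corresponds to the minimal cone of the subfan, and a basis form $\chi^m\omega_m$ pulls back along $f=\iota\circ f'$ (where $\iota\colon X_\tau\hookrightarrow X$) to the pullback along $f'$ of $\chi^m\omega_m|_{X_\tau}$, because $\iota^*$ on rational forms kills exactly those $\chi^m\omega_m$ with $m\notin\tau$ and restricts the others — this is the content of the formula defining $|_{X_\tau}$, and it is just the functoriality of $h$-differentials (Section 1.2) applied to the composite $Y'\to X_\tau\to X$. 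So the square commutes by the transitivity of $h$-differential pullback noted in Section 1.2.

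\textbf{The main obstacle} I expect is the regularity claim underlying the very definition of $f^*$, namely that $\chi^m\omega_m$ (extended by zero off $O_{\sigma_m}$) is genuinely an $h$-differential form rather than merely a rational form regular on the smooth locus: one must control the behaviour along the singular and non-normal loci of $X$, and this is where the combinatorial description of forms on smooth toric varieties and the weak normality hypothesis must be used carefully and in tandem. Everything downstream — linearity, independence of choices, the commutative square — then follows formally from the functoriality package for $h$-differentials already set up in Section 1.2.
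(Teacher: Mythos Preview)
Your approach is genuinely different from the paper's, and it is worth comparing. The paper defines $f^*$ directly: for $X'$ irreducible, take the smallest $\tau\in\Delta$ with $f(X')\subseteq X_\tau$, set $f^*\omega := {f'}^*(\omega|_{X_\tau})$ where $f'\colon X'\to X_\tau$, and prove regularity by resolving the indeterminacy of $X'\dashrightarrow Y_\tau$ for a toric desingularization $Y_\tau\to X_\tau$, then using $\Omega^p_{X'}=\mu_*\Omega^p_{Y'}$ for the auxiliary birational $\mu\colon Y'\to X'$. The commutative square is then reduced, by replacing $f$ with a toric desingularization of $X_\sigma$ and $Y'$ with an invariant cycle of $X'$, to the explicit combinatorial formula for restrictions of forms between smooth toric varieties. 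This is entirely self-contained and does not invoke the $h$-differential package of Section~1.2.

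Your route---embed $A^p(X)$ into $\Gamma(X,(\Omega^p_h)|_X)$ and then invoke functoriality---is legitimate in principle and is not circular (the $h$-sheaf exists before Theorem~\ref{mv}). But there are two points where your sketch is thinner than you acknowledge. First, the description ``equals $\chi^m\omega_m$ on $O_{\sigma_m}$ and $0$ elsewhere'' is not the right rational form on $X$: on each facet component $X_F$ with $\sigma_m\prec F$ the form must be $\chi^m\omega_m$ viewed through the inclusion $V_m\subset V_F$, not zero; only on facets not containing $\sigma_m$ is it zero. Second, and more seriously, the commutativity of the square does \emph{not} follow from transitivity of $h$-pullbacks alone: you also need that the combinatorial map $|_{X_\tau}\colon A^p(X)\to A^p(X_\tau)$ agrees with the $h$-differential pullback $\iota^*\colon\Gamma(X,(\Omega^p_h)|_X)\to\Gamma(X_\tau,(\Omega^p_h)|_{X_\tau})$ under your embeddings. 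That compatibility is precisely the content of the lemma for the special map $\iota\colon X_\tau\hookrightarrow X$, and establishing it requires unwinding $\iota^*$ on compatible toric resolutions of $X$ and $X_\tau$ and doing the same combinatorial check the paper performs directly. So your approach trades a direct construction for an abstract one, but the computational core---the toric formula from Section~1 applied to restrictions between invariant cycles on a smooth toric resolution---is unavoidable either way, and in your version it is hidden inside a step you have not carried out.
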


\begin{proof} Let $f\colon X'\to X$ be a morphism from a smooth variety $X'$. To define $f^*$,
we may suppose $X'$ is irreducible. Let $\tau$ be the smallest cone of $\Delta$ such that
$f(X')\subseteq X_\tau$. In particular, $f(X')$ intersects the orbit $O_\tau$. Let $f'\colon X'\to X_\tau$
be the induced morphism. 

Let $\omega\in A^p(X)$. Let $\omega_\tau=\omega |_{X_\tau}\in A^p(X_\tau)$ be its combinatorial restriction,
defined above. It is a regular differential $p$-form on the orbit $O_\tau$,
which is smooth, being isomorphic to a torus. Therefore ${f'}^*(\omega_\tau)$ is a well defined rational differential
$p$-form on $X'$ (regular on ${f'}^{-1}(O_\tau)$). Define 
$$
f^*\omega={f'}^*(\omega_\tau). 
$$
We claim that ${f'}^*(\omega_\tau)$ is regular everywhere on $X'$.
Indeed, choose a toric desingularization $\mu_\tau\colon Y_\tau\to X_\tau$. By Hironaka's resolution of the
indeterminacy locus of the rational map $X'\dashrightarrow Y_\tau$, we obtain a commutative diagram
\[ 
\xymatrix{
         Y' \ar[d]_\mu \ar[r]^h & Y_\tau  \ar[d]^{\mu_\tau}  \\
         X'    \ar[r]^{f'}               &  X_\tau               
} \]
By the combinatorial description of differential forms on the smooth toric variety $Y_\tau$, 
the rational form $\mu_\tau^*(\omega_\tau)$ is in fact regular everywhere on $Y_\tau$.
Then $h^*\mu_\tau^*(\omega_\tau)$ is regular on $Y'$. Therefore $\mu^*({f'}^*\omega_\tau)$
is regular on $Y'$. But $\mu$ is a proper birational contraction and $X',Y'$ are smooth, so
$\Omega^p_{X'}=\mu_*(\Omega^p_{Y'})$. Therefore the rational form ${f'}^*\omega_\tau$ is 
in fact regular on $X'$.

It remains to verify the commutativity of the square diagram. We may suppose $X'$ and $Y'$
are irreducible. Let $f(X')\subseteq X_\sigma$ and $f(Y')\subseteq X_\tau$, with $\sigma$ and $\tau$
minimal with this property. We obtain a commutative diagram 
\[ 
\xymatrix{
         X' \ar[d]_f & Y' \ar[l]_v \ar[d]^{f'}  \\
         X_\sigma                  &  X_\tau  \ar@{_{(}->}[l]               
} \]
We may replace $f$ by a toric desingularization of $X_\sigma$.
Then $v(Y')$ is contained in $f^{-1}(X_\tau)$, which is a union of closed invariant cycles
of $X'$. Each of these closed invariant cycles is smooth, since $X'$ is smooth. We may
replace $Y'$ by an invariant closed cycle of $X'$ which contains $v(Y')$. It remains to check
the claim for the special type of diagrams
\[ 
\xymatrix{
         X' \ar[d]_f & Y' \ar@{_{(}->}[l] \ar[d]^{f'}  \\
         X_\sigma                  &  X_\tau  \ar@{_{(}->}[l]               
} \]
where $f$ is a toric desingularization, $Y'\subset X'$ is a closed invariant cycle, and 
$f'(Y')\cap O_\tau\ne \emptyset$. By the explicit combinatorial formula for differential forms on smooth
toric varieties, the diagram
\[ 
\xymatrix{
         \Gamma(X',\Omega^p_{X'})   \ar[r]^{|_{Y'}}  & \Gamma(Y',\Omega^p_{Y'})   \\
         A^p(X_\sigma)  \ar[r]^{|_{X_\tau}} \ar[u]^{f^*}         &  A^p(X_\tau)   \ar[u]_{{f'}^*}                
} \]
is commutative.
\end{proof}

\begin{lem}\label{fct'} Consider a commutative diagram
\[ 
\xymatrix{
         X' \ar[d]_{f'} & X'' \ar[l]_v \ar[dl]^{f''}  \\
         X                 &            
} \]
with $X',X''$ smooth. Then the induced diagram of pullbacks 
\[ 
\xymatrix{
         \Gamma(X',\Omega^p_{X'}) \ar[r]^{v^*}  & \Gamma(X'',\Omega^p_{X''})  \\
         A^p(X)   \ar[u]^{{f'}^*}       \ar[ur]_{{f''}^*}         &            
} \]
is commutative.
\end{lem}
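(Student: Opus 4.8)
The plan is to deduce Lemma~\ref{fct'} from the square-diagram part of Lemma~\ref{fct}, exploiting the fact that the pullback $f^{*}$ constructed in that proof is, tautologically, the combinatorial restriction $(\,\cdot\,|_{X_{\tau}})$ to the smallest invariant cycle $X_{\tau}$ containing the image, followed by an honest (rational) pullback of forms that happens to be regular.

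First I would reduce to the case where $X'$ and $X''$ are irreducible. Since $X'$ and $X''$ are smooth, their irreducible components are open and closed, so $\Gamma(X',\Omega^{p}_{X'})=\prod_{i}\Gamma(X'_{i},\Omega^{p}_{X'_{i}})$ and likewise for $X''$, and all of $v$, $f'$, $f''$, and the pullbacks of Lemma~\ref{fct} are compatible with passing to components; testing $v^{*}\circ(f')^{*}=(f'')^{*}$ on a single component $X''_{j}$, whose image $v(X''_{j})$ lies in a unique component $X'_{i}$ of $X'$, replaces the original situation by the commutative triangle $X''_{j}\to X'_{i}\to X$. So assume $X'$, $X''$ irreducible. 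Next, let $\sigma\in\Delta$ be the smallest cone with $f'(X')\subseteq X_{\sigma}$ and $\tau\in\Delta$ the smallest cone with $f''(X'')\subseteq X_{\tau}$, and write $g'\colon X'\to X_{\sigma}$, $g''\colon X''\to X_{\tau}$ for the corestrictions. Since $f''(X'')=f'(v(X''))\subseteq X_{\sigma}$, minimality of $\tau$ forces $X_{\tau}\subseteq X_{\sigma}$, i.e. $\tau\prec\sigma$, and the square with top row $v\colon X''\to X'$, bottom row the inclusion $X_{\tau}\hookrightarrow X_{\sigma}$, and vertical maps $g'$, $g''$ commutes because $f''=f'\circ v$. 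By minimality the image $g'(X')$ meets the open orbit $O_{\sigma}$ of $X_{\sigma}$ and $g''(X'')$ meets $O_{\tau}$, so in forming the Lemma~\ref{fct} pullbacks $(g')^{*}$ and $(g'')^{*}$ the relevant smallest cones are $\sigma$ and $\tau$ themselves, and unwinding the construction in the proof of Lemma~\ref{fct} gives the factorizations
$$
(f')^{*}=(g')^{*}\circ(\,\cdot\,|_{X_{\sigma}})\colon A^{p}(X)\to\Gamma(X',\Omega^{p}_{X'}),\qquad (f'')^{*}=(g'')^{*}\circ(\,\cdot\,|_{X_{\tau}})\colon A^{p}(X)\to\Gamma(X'',\Omega^{p}_{X''}).
$$

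Finally I would apply the square-diagram statement of Lemma~\ref{fct} to the commutative square above (with $X'$, $X''$ smooth and $\tau$ a cone of the fan of $X_{\sigma}$), obtaining $v^{*}\circ(g')^{*}=(g'')^{*}\circ(\,\cdot\,|_{X_{\tau}})$ as maps $A^{p}(X_{\sigma})\to\Gamma(X'',\Omega^{p}_{X''})$; combined with the transitivity $(\omega|_{X_{\sigma}})|_{X_{\tau}}=\omega|_{X_{\tau}}$ (immediate from the definition of combinatorial restriction, using $\tau\prec\sigma$) this yields, for every $\omega\in A^{p}(X)$,
$$
v^{*}\big((f')^{*}\omega\big)=v^{*}\big((g')^{*}(\omega|_{X_{\sigma}})\big)=(g'')^{*}\big((\omega|_{X_{\sigma}})|_{X_{\tau}}\big)=(g'')^{*}\big(\omega|_{X_{\tau}}\big)=(f'')^{*}\omega,
$$
which is the asserted commutativity. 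There is no substantial obstacle: the whole content is the reduction to Lemma~\ref{fct}, and the only points demanding care are to unwind the definition of $f^{*}$ there so that it visibly factors through the combinatorial restriction $(\,\cdot\,|_{X_{\tau}})$, and to verify that $g'(X')$ and $g''(X'')$ really meet the open orbits $O_{\sigma}$ and $O_{\tau}$ so that those pullbacks are taken with the minimal cones.
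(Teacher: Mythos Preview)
Your proof is correct and follows essentially the same approach as the paper: reduce to the irreducible case, pick a cone $\tau$ with $f''(X'')\subseteq X_\tau$, and invoke Lemma~\ref{fct}. The paper's version is terser: it does not introduce the auxiliary cone $\sigma$ for $f'(X')$, but instead applies Lemma~\ref{fct} twice directly over $X$ (once to the square $X'\leftarrow X''\to X_\tau\hookrightarrow X$ to obtain $v^*\circ(f')^*=(g'')^*\circ(\cdot|_{X_\tau})$, and once to the degenerate square with $X''\xrightarrow{\mathrm{id}}X''$ to recover $(f'')^*=(g'')^*\circ(\cdot|_{X_\tau})$), whereas you unwind both factorizations by hand and apply Lemma~\ref{fct} once over $X_\sigma$; the content is the same.
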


\begin{proof}
We may suppose $X''$ is irreducible. Then there exists $\tau\in \Delta$ such that 
$f''(X'')\subset X_\tau$. We obtain a diagram 
\[ 
\xymatrix{
         X' \ar[d] & X'' \ar[l] \ar[dl] \ar[d] \\
         X                 &   X_\tau  \ar@{_{(}->}[l]        
} \]
and the claim follows by applying Lemma~\ref{fct} twice.
\end{proof}

\begin{thm}\label{mv}
Let $\epsilon\colon X_\bullet\to X$ be a smooth simplicial resolution. Then 
the natural homomorphism 
$\tilde{\Omega}^p_X\to R\epsilon_*(\Omega^p_{X_\bullet})$ is a quasi-isomorphism
(i.e. $\tilde{\Omega}^p_X\isoto \epsilon_*(\Omega^p_{X_\bullet})$ and 
$R^i\epsilon_*(\Omega^p_{X_\bullet})=0$ for $i> 0$).
\end{thm}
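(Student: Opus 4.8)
The plan is to argue by induction on $d=\dim X$, using a single d\'evissage: the abstract blow-up square attached to the normalization $\nu\colon\bar X\to X$, in two parallel incarnations. The combinatorial incarnation is a short exact sequence of the sheaves $\tilde\Omega^p$ of Section~3; the geometric incarnation is a distinguished triangle for the complexes $R\epsilon_*\Omega^p_{(-)_\bullet}$, obtained by gluing smooth simplicial resolutions of the pieces. Lemmas~\ref{fct} and~\ref{fct'} are exactly what is needed to glue the two incarnations into a commutative ladder and to identify the comparison map with the ``natural homomorphism'' of the statement. The base case $d=0$ is immediate: $X$ is then smooth and the assertion is Corollary~\ref{sno}.

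Now fix $d\ge 1$ and assume the theorem for weakly normal toric face rings of dimension $<d$. Write $\bar X=\bigsqcup_F\bar X_F$ for the normalization of $X$, a disjoint union of normal affine toric varieties $\bar X_F=\Spec k[(S_F-S_F)\cap F]$ indexed by the facets $F$ of $\Delta$; let $Z\subseteq X$ be the locus over which $\nu$ is not an isomorphism (the non-normal locus), with its reduced structure, and $\bar Z=\nu^{-1}(Z)$, reduced. Being $T$-invariant and closed, $Z$ is the toric variety of a proper subfan $\Delta_Z\subsetneq\Delta$, and as a $T$-invariant closed subvariety of the weakly normal $X$ it is itself weakly normal (Corollary~\ref{fwn} and the remark after it); since $\nu$ is an isomorphism over the dense orbits of the components, $\dim Z<d$, and likewise $\bar Z=\bigsqcup_F(\bar Z\cap\bar X_F)$ is a weakly normal toric face ring of dimension $<d$ (a union of orbit closures inside a normal toric variety). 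One then checks that
$$
0\longrightarrow\tilde\Omega^p_X\longrightarrow\tilde\Omega^p_{\bar X}\oplus\tilde\Omega^p_Z\longrightarrow\tilde\Omega^p_{\bar Z}\longrightarrow 0
$$
is exact, with maps the combinatorial restrictions of Lemma~\ref{fct} (on the $\bar X$-summands composed with the pullback $\nu^*$). This is a direct computation on the $\chi^m$-graded pieces of the modules $A^p(-)=\oplus_m\chi^m\wedge^pV_m$: if $\sigma_m\notin\Delta_Z$ the graded piece collapses to an identity $\chi^m\wedge^pV_m\isoto\chi^m\wedge^pV_m$ (here one uses that $X$ is seminormal to see that $m$ lies in $S_F$ for the unique facet $F\supseteq\sigma_m$ --- a $\sigma_m$ lying in two facets would have $X_{\sigma_m}\subseteq Z$ --- and that $S_F\cap\sigma_m-S_F\cap\sigma_m=M\cap\sigma_m$), while if $\sigma_m\in\Delta_Z$ it is the tautological sequence $0\to W\to W\oplus(\bigoplus_{F\supseteq\sigma_m}W_0)\to\bigoplus_{F\supseteq\sigma_m}W_0\to 0$, where $W=\chi^m\wedge^pV_m^X$ embeds diagonally via $\nu^*\colon W\hookrightarrow W_0=\chi^m\wedge^pV_m^{\bar X}$. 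On the geometric side, gluing compatible smooth simplicial resolutions of $\bar X$, $Z$ and $\bar Z$ (made compatible by Lemma~\ref{sh}; the glued simplicial scheme is again a resolution by the cohomological Mayer--Vietoris sequence for the closed cover, as in~\cite{DB81}) produces a distinguished triangle
$$
R\epsilon_*\Omega^p_{X_\bullet}\longrightarrow R\epsilon_*\Omega^p_{\bar X_\bullet}\oplus R\epsilon_*\Omega^p_{Z_\bullet}\longrightarrow R\epsilon_*\Omega^p_{\bar Z_\bullet}\longrightarrow R\epsilon_*\Omega^p_{X_\bullet}[1],
$$
whose first two maps reduce, under $h^0$ and Lemmas~\ref{fct}--\ref{fct'}, to those of the displayed short exact sequence.

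To close the induction it suffices to evaluate the last three terms of the triangle. For a normal affine toric variety $\bar X_F$ a toric desingularization $\mu\colon Y_F\to\bar X_F$ is a smooth resolution, $\mu_*\Omega^p_{Y_F}=\tilde\Omega^p_{\bar X_F}$ by the combinatorial description of Section~1.3 (which shows $\Gamma(Y_F,\Omega^p_{Y_F})$ depends only on the cone of $\bar X_F$, not on the smooth subdivision), and $R^i\mu_*\Omega^p_{Y_F}=0$ for $i>0$ by Danilov~\cite{Dan78}; hence $R\epsilon_*\Omega^p_{\bar X_\bullet}\simeq\tilde\Omega^p_{\bar X}$, concentrated in degree $0$. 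By the inductive hypothesis $R\epsilon_*\Omega^p_{Z_\bullet}\simeq\tilde\Omega^p_Z$ and $R\epsilon_*\Omega^p_{\bar Z_\bullet}\simeq\tilde\Omega^p_{\bar Z}$, also concentrated in degree $0$, and the map between them is the surjection from the short exact sequence above. Therefore $R\epsilon_*\Omega^p_{X_\bullet}$ is concentrated in degree $0$ and equals $\ker(\tilde\Omega^p_{\bar X}\oplus\tilde\Omega^p_Z\to\tilde\Omega^p_{\bar Z})=\tilde\Omega^p_X$; unwinding the identifications, the comparison $\tilde\Omega^p_X\to R\epsilon_*\Omega^p_{X_\bullet}$ is the natural homomorphism, so it is a quasi-isomorphism.

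The step I expect to require the most care is not any single verification but the compatibility of the two incarnations of the d\'evissage: the glued simplicial resolutions must be chosen so that the resulting maps of de Rham complexes become, after $h^0$ and the identifications $\epsilon_*\Omega^p_{(-)_\bullet}\cong\tilde\Omega^p_{(-)}$ supplied inductively, exactly the combinatorial restriction maps of Lemma~\ref{fct} (along $\bar Z\hookrightarrow\bar X$ and along $\nu|_{\bar Z}\colon\bar Z\to Z$). Lemmas~\ref{fct} and~\ref{fct'} are designed to make this work, but assembling them along simplicial resolutions requires some bookkeeping. The only genuinely external ingredient is the vanishing $R^i\mu_*\Omega^p_Y=0$ $(i>0)$ for a toric desingularization of a normal affine toric variety (equivalently $H^i(Y,\Omega^p_Y)=0$ for $Y$ smooth toric proper over an affine toric variety), which I take from Danilov; it can also be recovered from the explicit complex of Section~1.3 by a convexity argument, in the spirit of Ishida.
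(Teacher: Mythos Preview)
Your proof is correct and follows a genuinely different route from the paper's. Both arguments proceed by induction on $\dim X$ and both ultimately rest on Danilov's vanishing for normal toric varieties, but the d\'evissage is different.

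The paper peels off the full toric boundary $\Sigma\subset X$ and works with the pair $(X,\Sigma)$: the short exact sequence is
$$
0\to\tilde\Omega^p_{(X,\Sigma)}\to\tilde\Omega^p_X\to\tilde\Omega^p_\Sigma\to 0,
$$
with $\tilde\Omega^p_\Sigma\to\underline\Omega^p_\Sigma$ handled by induction. The key computation is then that weak normality gives $\tilde\Omega^p_{(X,\Sigma)}=\pi_*\tilde\Omega^p_{(\bar X,\bar\Sigma)}$ directly (because $S_F\cap\relint F=M_F\cap\relint F$), so one reduces to Danilov's result for the \emph{pair} $(\bar X,\bar\Sigma)$~\cite[Proposition~1.8]{Dan91}. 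No Mayer--Vietoris square is needed; only a single identification along the normalization.

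Your argument instead runs the abstract blow-up square for the normalization, with $Z$ the non-normal locus. This is the natural d\'evissage from the general theory of the Du~Bois complex, and it works here; the price is a longer case analysis to verify exactness of
$$
0\to\tilde\Omega^p_X\to\tilde\Omega^p_{\bar X}\oplus\tilde\Omega^p_Z\to\tilde\Omega^p_{\bar Z}\to 0
$$
on graded pieces (your $W_0$ should really be $W_F$, depending on the facet, and ``$M\cap\sigma_m$'' should be $M_F\cap(\sigma_m-\sigma_m)$; the point you need is that $\sigma_m\notin\Delta_Z$ forces $\Lambda_{\sigma_m}=M_F\cap(\sigma_m-\sigma_m)$, which follows by comparing residue fields at $\eta_{\sigma_m}$). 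You also invoke induction twice (for $Z$ and $\bar Z$) rather than once, and use Danilov's absolute vanishing $R^i\mu_*\Omega^p_Y=0$ rather than the relative version. In exchange, your argument is closer to the general machinery of~\cite{DB81} and would adapt more readily to settings where no canonical ``boundary'' is available.
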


\begin{proof} Let $\delta_0,\delta_1\colon X_1\to X_0$ be the two face morphisms.
Then $\delta_0^*\epsilon_0^*=\epsilon_1^*=\delta_1^*\epsilon_0^*$, by Lemma~\ref{fct'}.
Therefore the pullback homomorphism $\epsilon_0^*$ maps $\tilde{\Omega}^p_X$ into 
$\Ker({\epsilon_0}_* \Omega^p_{X_0}  \rightrightarrows {\epsilon_1}_* \Omega^p_{X_1})=\epsilon_*(\Omega^p_{X_\bullet})$.
This defines a natural homomorphism $\tilde{\Omega}^p_X\to R\epsilon_*(\Omega^p_{X_\bullet})$.
We show that this is a quasi-isomorphism, by induction on $\dim X$.

Let $\Sigma$ be the (toric) boundary of $X$. The restriction $\tilde{\Omega}^p_X\to \tilde{\Omega}^p_\Sigma$ is surjective.
Denote its kernel by $\tilde{\Omega}^p_{(X,\Sigma)}$. 
Denote by $\underline{\Omega}^p_X$ the complex on the right hand side. We obtain a commutative diagram
 \[ 
\xymatrix{
    0 \ar[r] & \underline{\Omega}^p_{(X,\Sigma)} \ar[r] & \underline{\Omega}^p_X  \ar[r] & \underline{\Omega}^p_\Sigma \ar[r] & 0 \\
    0 \ar[r] & \tilde{\Omega}^p_{(X,\Sigma)} \ar[r] \ar[u]^\alpha & \tilde{\Omega}^p_X  \ar[r] \ar[u]^\beta & \tilde{\Omega}^p_\Sigma \ar[r] \ar[u]^\gamma & 0      
} \]
where the bottom raw is exact, and the top raw is an exact triangle in the derived category.
Since $\Sigma$ is again weakly normal, $\gamma$ is a quasi-isomorphism by induction on dimension.
If $\alpha$ is a quasi-isomorphism, then $\beta$ is a quasi-isomorphism.

We claim that $\alpha$ is a quasi-isomorphism. Indeed, let $\pi\colon \bar{X}\to X$ be the 
normalization. Each component of $\bar{X}$ is an affine, normal toric variety. We construct 
a desingularization $\bar{f}\colon Y\to \bar{X}$ by choosing a toric desingularization for each component of 
$\bar{X}$. Let $f=\pi \circ \bar{f}\colon Y\to X$ be the induced
desingularization. Let $\Sigma'$ and $\bar{\Sigma}$ be the (toric) boundaries of $Y$ and $\bar{X}$
respectively. Since $f\colon Y\setminus \Sigma'\to X\setminus \Sigma$ is an isomorphism, we 
obtain a quasi-isomorphism 
$
\underline{\Omega}^p_{(X,\Sigma)}\to Rf_* \underline{\Omega}^p_{(Y,\Sigma')}
$
(see the proof of~\cite[Proposition 3.9]{DB81}).
Since $Y$ is smooth and $\Sigma'$ is a normal crossings divisor in $Y$,
$\Omega^p_{(Y,\Sigma')}\to   \underline{\Omega}^p_{(Y,\Sigma')}$ is a quasi-isomorphism. 
By~\cite[Proposition 1.8]{Dan91}, $\tilde{\Omega}^p_{(\bar{X},\bar{\Sigma})}\to R\bar{f}_*\Omega^p_{(Y,\Sigma')}$
is a quasi-isomorphism. Since $\pi$ is finite,  
$\pi_* \tilde{\Omega}^p_{(\bar{X},\bar{\Sigma})}\to Rf_*\Omega^p_{(Y,\Sigma')}$
is a quasi-isomorphism. As $X$ is weakly normal, we see combinatorially that 
$\tilde{\Omega}^p_{(X,\Sigma)}=\pi_* \tilde{\Omega}^p_{(\bar{X},\bar{\Sigma})}$
(since if $\Spec k[S]$ is weakly normal, then $S\cap \relint \sigma_S=(S-S)\cap \relint \sigma_S$).
From the commutative diagram
\[ 
\xymatrix{
         \pi_*\tilde{\Omega}^p_{(\bar{X},\bar{\Sigma})}   \ar[r]  & Rf_*\underline{\Omega}^p_{(Y,\Sigma')}     \\
        \tilde{\Omega}^p_{(X,\Sigma)}   \ar[r]  \ar[u]        &  \underline{\Omega}^p_{(X,\Sigma)}   \ar[u]               
} \]
we conclude that $\tilde{\Omega}^p_{(X,\Sigma)}\to \underline{\Omega}^p_{(X,\Sigma)}$ is a quasi-isomorphism.
\end{proof}

Let $d\colon A^p(X)\to A^{p+1}(X)$ be the $k$-linear map such that $d(\chi^m\omega_m)=
\chi^m\cdot (\frac{d\chi^m}{\chi^m}\wedge \omega_m)$.
It induces a structure of complex with $k$-linear differential $\tilde{\Omega}^*_X$. Let $F$ be its naive filtration.

\begin{cor}\label{cDB}
Let $\epsilon\colon X_\bullet\to X$ be a smooth simplicial resolution. Then the natural homomorphism 
$(\tilde{\Omega}^*_X,F)\to R\epsilon_*(\Omega^*_{X_\bullet},F)$ is a filtered quasi-isomorphism.
\end{cor}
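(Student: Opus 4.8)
The strategy is to bootstrap from the unfiltered statement (Theorem~\ref{mv}) to the filtered one, by observing that the naive filtration $F$ on $\tilde{\Omega}^*_X$ has graded pieces $\Gr_F^p\tilde{\Omega}^*_X = \tilde{\Omega}^p_X[-p]$, while the naive filtration on each complex $\Omega^*_{X_n}$ (the components of the simplicial scheme $X_\bullet$) has graded pieces $\Omega^p_{X_n}[-p]$. A filtered morphism of filtered complexes is a filtered quasi-isomorphism precisely when it induces a quasi-isomorphism on all graded pieces $\Gr_F^p$ (this is the standard criterion; for the derived pushforward one uses that $R\epsilon_*$ commutes with the passage to graded pieces of the naive filtration, since the naive filtration is by subcomplexes and $R\epsilon_*$ is computed degreewise on the simplicial scheme). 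So the whole statement reduces to checking that $\Gr_F^p$ of the map $(\tilde{\Omega}^*_X,F)\to R\epsilon_*(\Omega^*_{X_\bullet},F)$ is a quasi-isomorphism for each $p$.

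First I would make precise that the map $(\tilde{\Omega}^*_X,F)\to R\epsilon_*(\Omega^*_{X_\bullet},F)$ is a genuine morphism of filtered complexes: this is immediate because the map $\tilde{\Omega}^p_X\to \epsilon_*\Omega^p_{X_\bullet}$ constructed in the proof of Theorem~\ref{mv} is built from the pullback homomorphisms $\epsilon_0^*$, which commute with the exterior differential $d$ (by Lemma~\ref{fct}, the $f^*$ are pullbacks of rational forms, hence compatible with $d$), and it is by construction filtered for the naive filtrations. Next, I would compute $\Gr_F^p R\epsilon_*(\Omega^*_{X_\bullet},F)$. Because $F$ on $\Omega^*_{X_\bullet}$ is the naive (stupid) filtration, $\Gr_F^p(\Omega^*_{X_\bullet},F) = \Omega^p_{X_\bullet}[-p]$, and applying $R\epsilon_*$ termwise gives $\Gr_F^p R\epsilon_*(\Omega^*_{X_\bullet},F) = R\epsilon_*(\Omega^p_{X_\bullet})[-p]$. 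On the source side, $\Gr_F^p(\tilde{\Omega}^*_X,F) = \tilde{\Omega}^p_X[-p]$. So the induced map on $\Gr_F^p$ is, up to the shift $[-p]$, exactly the map $\tilde{\Omega}^p_X\to R\epsilon_*(\Omega^p_{X_\bullet})$ of Theorem~\ref{mv}, which was proved there to be a quasi-isomorphism. Hence each $\Gr_F^p$ is a quasi-isomorphism, and therefore the filtered map is a filtered quasi-isomorphism.

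The only genuine subtlety — the place where a little care is needed rather than the formal manipulation above — is justifying that $R\epsilon_*$ commutes with $\Gr_F^p$ of the naive filtration, i.e. that the natural map $\Gr_F^p R\epsilon_*(\Omega^*_{X_\bullet},F) \to R\epsilon_*\Gr_F^p(\Omega^*_{X_\bullet},F)$ is an isomorphism in the derived category. This holds because the naive filtration is a finite filtration by subcomplexes, so for each single step $F^p/F^{p+1}$ one has a short exact sequence of complexes; $R\epsilon_*$ sends it to an exact triangle, and since $F^p = 0$ for $p$ large and $F^0$ is the whole complex, an obvious descending (or ascending) induction on the filtration length identifies the graded pieces of $R\epsilon_*$ with $R\epsilon_*$ of the graded pieces. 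Equivalently, and perhaps more cleanly, one notes that $R\epsilon_*$ of a filtered complex in the filtered derived category is defined so that it respects graded pieces by construction (this is part of Du Bois's setup in \cite{DB81}, and is the reason the filtered Deligne--Du Bois complex is well defined). I would phrase the argument via this standard fact, so the corollary follows in a couple of lines.
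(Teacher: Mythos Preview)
Your proposal is correct and is exactly the argument the paper has in mind: the paper gives no proof for this corollary, treating it as an immediate consequence of Theorem~\ref{mv} via the standard criterion that a filtered morphism (for finite filtrations) is a filtered quasi-isomorphism if and only if it induces quasi-isomorphisms on all $\Gr_F^p$. Your handling of the two small points --- that the pullbacks $f^*$ commute with $d$ so the map is a genuine morphism of complexes, and that $R\epsilon_*$ commutes with $\Gr_F^p$ for the naive filtration --- is appropriate and matches the conventions of~\cite{DB81}.
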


Note that $\cO_X=\tilde{\Omega}^0_X$. 

\begin{lem}
Let $f\colon X'\to X$ be a desingularization, let $X''\to X'\times_X X'$ be a desingularization. We obtain a
commutative diagram
$$
\xymatrix{
X'  \ar[d]_{f'} & X'' \ar[dl]^{f''} \ar@<1ex>[l]^{p_2}\ar@<-1ex>[l]_{p_1}   \\
X  &
}
$$
Then $\tilde{\Omega}^p_X\isoto \Ker(f'_*\Omega^p_{X'} \rightrightarrows f''_*\Omega^p_{X''})
=\{\omega'\in f'_*\Omega^p_{X'}; p_1^*\omega'=p_2^*\omega' \}$.
\end{lem}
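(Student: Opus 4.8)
The plan is to identify the right hand side with $\epsilon_*(\Omega^p_{X_\bullet})$ for a conveniently chosen smooth simplicial resolution $\epsilon\colon X_\bullet\to X$ and then quote Theorem~\ref{mv}. (The same argument works verbatim for an arbitrary $X$, using the general discussion of $\tilde\Omega^p$ in Section~1 in place of Theorem~\ref{mv}.)

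First I would build the resolution. Since $f'$ is a desingularization it is proper, and it is surjective because it is birational and $X$ is reduced; a proper surjective morphism is of universal cohomological descent, so the $0$-coskeleton $\mathrm{cosk}^X_0(X')\to X$ is a resolution in the sense of Section~1. Desingularizing its terms as in \cite{Del74, DB81}, and keeping the already smooth term $X_0=X'$ in degree $0$, one obtains a smooth simplicial resolution $\epsilon\colon X_\bullet\to X$ with $X_0=X'$, $\epsilon_0=f'$, with $X_1$ a desingularization $g\colon X_1\to X'\times_X X'$, and with the two face morphisms $X_1\rightrightarrows X_0$ equal to $q_1\circ g$ and $q_2\circ g$, where $q_1,q_2\colon X'\times_X X'\to X'$ are the projections. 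By Theorem~\ref{mv}, $\tilde\Omega^p_X\isoto\epsilon_*(\Omega^p_{X_\bullet})$, and (as in the proof of Theorem~\ref{mv}, the push-forward of a sheaf on a simplicial scheme only involves the $1$-truncation)
$$
\epsilon_*(\Omega^p_{X_\bullet})=\Ker\bigl({\epsilon_0}_*\Omega^p_{X_0}\rightrightarrows{\epsilon_1}_*\Omega^p_{X_1}\bigr)=\{\omega'\in f'_*\Omega^p_{X'}\ ;\ (q_1\circ g)^*\omega'=(q_2\circ g)^*\omega'\}.
$$
So $\tilde\Omega^p_X\isoto\Ker\bigl(f'_*\Omega^p_{X'}\rightrightarrows g_*\Omega^p_{X_1}\bigr)$ for this one desingularization $g$ of $X'\times_X X'$.

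Next I would check that this kernel is unchanged if $g$ is replaced by the given $f''\colon X''\to X'\times_X X'$. Choose (Hironaka) a common smooth variety $W$ with proper birational morphisms $W\to X_1$ and $W\to X''$ over $X'\times_X X'$, obtained by resolving $X_1\times_{X'\times_X X'}X''$. A form $\omega'\in f'_*\Omega^p_{X'}$ satisfies $p_1^*\omega'=p_2^*\omega'$ on $X''$, where $p_i$ is the composite of $f''$ with $q_i$, if and only if the corresponding identity holds on $W$: one direction is pull-back, and the other uses that for a proper birational morphism of smooth varieties the pull-back on $p$-forms is an isomorphism onto global sections (Corollary~\ref{sno}). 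The same applies to $X_1$, hence
$$
\Ker\bigl(f'_*\Omega^p_{X'}\rightrightarrows g_*\Omega^p_{X_1}\bigr)=\Ker\bigl(f'_*\Omega^p_{X'}\rightrightarrows f''_*\Omega^p_{X''}\bigr)=\{\omega'\in f'_*\Omega^p_{X'}\ ;\ p_1^*\omega'=p_2^*\omega'\},
$$
which combined with the previous paragraph gives the assertion.

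The main obstacle is the construction in the second paragraph: producing a smooth simplicial resolution with prescribed $0$-term $X_0=X'$ and augmentation $\epsilon_0=f'$. This is the standard combination of ``proper surjective $\Rightarrow$ universal cohomological descent'' with the termwise resolution of the $0$-coskeleton, but since the degeneracies constrain the degree-$1$ term to a particular desingularization of $X'\times_X X'$, one cannot simply take $X_1=X''$, and that is exactly what the independence argument of the last paragraph repairs.
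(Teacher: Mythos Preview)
Your argument is correct, but the paper's proof is shorter because it sidesteps your independence step entirely. Instead of taking whatever degree-$1$ term Deligne's construction happens to produce and then comparing it to $X''$ via a common roof $W$, the paper simply \emph{chooses} the $1$-truncated hypercovering from the outset: set $X_0=X'$, $X_1=X''\sqcup X'$, with face maps $\delta_0,\delta_1$ equal to $p_1,p_2$ on $X''$ and to the identity on $X'$, and with the degeneracy $s_0\colon X'\hookrightarrow X''\sqcup X'$ the inclusion of the second summand. This is already a $1$-truncated smooth proper hypercovering (the map $X_1\to X'\times_X X'$ is proper surjective because $X''\to X'\times_X X'$ is), so by~\cite[6.7.4]{Del74} it extends to a full smooth proper hypercovering $\epsilon\colon X_\bullet\to X$. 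Theorem~\ref{mv} then gives $\tilde\Omega^p_X\isoto\epsilon_*(\Omega^p_{X_\bullet})$, and since both face maps are the identity on the $X'$-summand of $X_1$, the equalizer condition there is vacuous and
\[
\epsilon_*(\Omega^p_{X_\bullet})=\Ker\bigl({\epsilon_0}_*\Omega^p_{X_0}\rightrightarrows{\epsilon_1}_*\Omega^p_{X_1}\bigr)=\Ker\bigl(f'_*\Omega^p_{X'}\rightrightarrows f''_*\Omega^p_{X''}\bigr).
\]
So the extra copy of $X'$ is exactly the device that absorbs the degeneracy constraint you worried about, making your third paragraph unnecessary. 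Your approach has the mild advantage of showing explicitly that the kernel is independent of the choice of desingularization of $X'\times_X X'$, but this also follows from the paper's argument applied twice, or from the general independence results in Section~1.
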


\begin{proof} Let $X_0=X'$, $\epsilon_0=f'$. Let $X_1=X'' \sqcup X'$, let 
$\delta_0,\delta_1\colon X_1\to X_0$ be the identity on $X'$, and $p_1,p_2$
respectively on $X''$. Let $s_0\colon X_0\to X_1$ be the extension of the identity of $X'$.
Let $\epsilon_1=f''$. Both desingularizations are proper and surjective, so we obtain a 
$1$-truncated smooth proper hypercovering
$$
\xymatrix{
X_0 \ar[d]_{\epsilon_0} \ar[r] & X_1 \ar[dl]^{\epsilon_1} \ar@<1ex>[l]^{\delta_1}\ar@<-1ex>[l]_{\delta_0}   \\
X  &
}
$$
We can extend the $1$-truncated augmented simplicial object to a smooth proper hypercovering 
$\epsilon \colon X_\bullet\to X$ (see~\cite[6.7.4]{Del74}).
By Theorem~\ref{mv}, $\tilde{\Omega}^p_X\isoto \epsilon_*(\Omega^p_{X_\bullet})$. But
$$
\epsilon_*(\Omega^p_{X_\bullet})=\Ker({\epsilon_0}_*\Omega^p_{X_0} \rightrightarrows {\epsilon_1}_*\Omega^p_{X_1})
=\Ker(f'_*\Omega^p_{X'} \rightrightarrows f''_*\Omega^p_{X''}).
$$
\end{proof}

In particular, $\tilde{\Omega}^p_X$ coincides with the sheaf of $h$-differential forms~\cite{HJ14}.


\subsection{Toric pairs}


Let $X=\Spec k[\cM]$ be a weakly normal affine variety associated
to a monoidal complex $\cM=(M,\Delta,(S_\sigma)_{\sigma\in \Delta})$. The torus $\Spec k[M]$ acts
on $X$. Let $Y\subset X$ be an invariant closed subscheme, with reduced structure.
Then $Y=\Spec k[\cM']$, where $\cM'=(M,\Delta',(S_\sigma)_{\sigma\in \Delta'})$ and $\Delta'$ is a subfan of $\Delta$, 
is also weakly normal. The restriction $A^p(X)\to A^p(Y)$ is surjective. Denote the kernel by $A^p(X,Y)$.
We have 
$$
A^p(X,Y)=\oplus_{m\in \cup_{\sigma\in \Delta}S_\sigma\setminus \cup_{\tau\in \Delta'}S_\tau}
\chi^m\cdot \wedge^pV_m.
$$
Denote by $\tilde{\Omega}^p_{(X,Y)}$ the coherent $\cO_X$-module induced by $A^p(X,Y)$. We obtain a 
short exact sequence
$$
0\to \tilde{\Omega}^p_{(X,Y)} \to \tilde{\Omega}^p_{X} \stackrel{|_{Y}}{\to} \tilde{\Omega}^p_Y\to 0.
$$
We obtain a differential complex $\tilde{\Omega}^*_{(X,Y)}$, and if we denote by $F$ the naive filtration,
we obtain by Corollary~\ref{cDB} a filtered quasi-isomorphism
$$
(\tilde{\Omega}^*_{(X,Y)},F) \to (\underline{\Omega}^*_{(X,Y)},F).
$$ 
Note that $\cI_{Y\subset X}=\tilde{\Omega}^0_{(X,Y)}$.

\begin{rem}
$\Gamma(X,\tilde{\Omega}^p_{(X,Y)})=\oplus_{\sigma\in \Delta,X_\sigma\not\subset Y}
\Gamma(X_\sigma,\tilde{\Omega}^p_{(X_\sigma,\partial X_\sigma)})$, where $\partial X_\sigma=X_\sigma\setminus O_\sigma$
is the toric boundary of the irreducible toric variety $X_\sigma$.
\end{rem}

\begin{rem}
The sheaf of $h$-differentials can be computed without the weakly normal assumption.
Let $X=\Spec k[\cM]$ be the variety associated to a monoidal complex. 
Let $f\colon X^{wn}\to X$ be the weak normalization, described in Proposition~\ref{wen}. Then 
$\tilde{\Omega}^p_X=f_* \tilde{\Omega}^p_{X^{wn}}$.
If $Y\subset X$ is a union of closed torus invariant subvarieties, then $f^{-1}(Y)$ is weakly normal,
hence $f^{-1}(Y)=Y^{wn}$. We obtain $\tilde{\Omega}^p_{(X,Y)}=f_* \tilde{\Omega}^p_{(X^{wn},Y^{wn})}$.
\end{rem}


\section{Weakly toroidal varieties}


Let $k$ be an algebraically closed field of characteristic zero. An algebraic variety $X/k$
has {\em weakly toroidal singularities} if for every closed point $x\in X$, 
there exists an isomorphism of complete local $k$-algebras $\cO_{X,x}^\wedge\simeq \cO_{X',x'}^\wedge$,
where $X'=\Spec k[\cM]$ is weakly normal, associated to a monoidal complex 
$\cM=(M,\Delta,(S_\sigma)_{\sigma\in \Delta})$, and $x'$ is a closed point contained in the 
closed orbit of $X'$. We say that $(X',x')$ is a local model for $(X,x)$.

\begin{exmp}
Let $\cM=(M,\Delta,(S_\sigma)_{\sigma\in \Delta})$ be a monoidal complex.
Then $X=\Spec k[\cM]$ has weakly toroidal singularities if and only if $X$ is weakly normal
(by Remarks~\ref{al} and~\ref{deco}).
\end{exmp}

\begin{rem} Suppose a local model of $(X,x)$ satisfies $S_\sigma=M\cap \sigma$ for all $\sigma\in \Delta$.
Then we can find another local model such that $x'$ is a fixed point of the torus action (use the discussion
on germs at the end of Section 2). In particular, for Danilov's toroidal singularities~\cite{Dan91} and Ishida's 
polyhedral singularities~\cite{Ish85} we can always find local models near a fixed point.
\end{rem}

Let $X/k$ have weakly toroidal singularities. Then $X$ is normal if and only if $X$
is toroidal in the sense of Danilov, that is the local models are $(X',x')$ with $X'$
an affine toric normal variety, and $x'$ a torus invariant closed point of $X'$.

\begin{thm}\label{mvtl}
Let $X$ have weakly toroidal singularities. Let $\epsilon\colon X_\bullet\to X$ be a smooth simplicial resolution. 
Then $\tilde{\Omega}^p_X\to R\epsilon_*(\Omega^p_{X_\bullet})$ is a quasi-isomorphism
(i.e. $\tilde{\Omega}^p_X\isoto \epsilon_*(\Omega^p_{X_\bullet})$ and 
$R^i\epsilon_*(\Omega^p_{X_\bullet})=0$ for $i>0$).
\end{thm}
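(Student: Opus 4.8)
The plan is to reduce Theorem~\ref{mvtl} to the affine toric face ring case treated in Theorem~\ref{mv}, using the fact that the sheaf of $h$-differentials $\tilde{\Omega}^p_X$ is defined \'etale-locally (indeed fppf-locally on the target of a smooth map) and is compatible with completion. Since the statement $\tilde{\Omega}^p_X\isoto \epsilon_*(\Omega^p_{X_\bullet})$ and $R^i\epsilon_*(\Omega^p_{X_\bullet})=0$ $(i>0)$ is local on $X$, it suffices to prove the corresponding statement on each complete local ring $\cO_{X,x}^\wedge$, or on a suitable \'etale neighborhood of each closed point $x\in X$. By definition of weakly toroidal singularities, $\cO_{X,x}^\wedge \simeq \cO_{X',x'}^\wedge$ where $X'=\Spec k[\cM]$ is weakly normal and $x'$ lies in the closed orbit. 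Using Remarks~\ref{al} and~\ref{deco}, I may further replace $X'$ by the $T$-invariant open subset $U=\Spec k[\cM_{x'}]$, which is again a (weakly normal) toric face ring, and whose completion at $x'$ agrees with that of $X'$ at $x'$.

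First I would make precise the descent statement for $h$-differentials. The key point is that, by its very construction in Section~1-B via a smooth simplicial resolution together with the functoriality established there, the formation of $\tilde{\Omega}^p$ commutes with smooth (in particular \'etale) base change, and the vanishing/iso property of the natural map $\tilde{\Omega}^p_X\to R\epsilon_*(\Omega^p_{X_\bullet})$ can be checked after faithfully flat base change. Since Theorem~\ref{mh}.a) of the companion statement, and more to the point Theorem~\ref{mv} together with the paragraph comparing $\tilde{\Omega}^p_X$ with the sheaf of $h$-differentials, already identify $\tilde{\Omega}^p_{X'}$ with $(\Omega^p_h)|_{X'}$ for $X'$ the spectrum of a weakly normal toric face ring, the next step is to transfer this identification along the isomorphism $\cO_{X,x}^\wedge\simeq \cO_{X',x'}^\wedge$. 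Here I would invoke the Artin-type approximation mechanism: an isomorphism of complete local rings can be approximated by an isomorphism of \'etale (or Henselian) local neighborhoods, compatible enough with $\Omega^p$ that the conclusion descends; alternatively one works directly over $\cO_{X,x}^\wedge$, noting that $R^i\epsilon_*$ of a coherent sheaf and the kernel description of $\tilde{\Omega}^p$ are insensitive to completion for excellent Noetherian rings.

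With the local comparison in hand, I would conclude as follows. Choose a smooth simplicial resolution $\epsilon\colon X_\bullet\to X$. The map $\epsilon^*$ carries $\tilde{\Omega}^p_X$ into $\epsilon_*(\Omega^p_{X_\bullet})$ exactly as in the proof of Theorem~\ref{mv} (via Lemma~\ref{fct'} applied to the face maps, once one knows the pullback $f^*$ of Lemma~\ref{fct} makes sense; in the weakly toroidal setting $f^*$ is defined \'etale-locally by the toric face ring case). To check this is a quasi-isomorphism, restrict to a closed point $x$, pass to the completion, and use that $\cO_{X,x}^\wedge\simeq \cO_{X',x'}^\wedge$ with $X'$ a weakly normal toric face ring, where the assertion is Theorem~\ref{mv}. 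Since $R^i\epsilon_*(\Omega^p_{X_\bullet})$ is coherent, its stalk at $x$ vanishes iff its completion does, and likewise $\tilde{\Omega}^p_X\to \epsilon_*(\Omega^p_{X_\bullet})$ is an isomorphism iff it is so after completion at every closed point; both hold by the toric face ring case.

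The main obstacle is the bookkeeping in the reduction step: one must verify that the sheaf of $h$-differentials, the complex $\tilde{\Omega}^p_X$ built from rational forms regular on the smooth locus, and the objects $R^i\epsilon_*(\Omega^p_{X_\bullet})$ all behave well under \'etale base change and under completion, and that an abstract isomorphism $\cO_{X,x}^\wedge\simeq \cO_{X',x'}^\wedge$ of complete local $k$-algebras suffices to transport the conclusion. This is essentially a combination of (a) the \'etale-local nature of $h$-differentials (already implicit in~\cite{HJ14} and in Section~1-B), (b) flatness of completion for excellent rings so that the kernel description of $\tilde{\Omega}^p$ and the higher direct images commute with $(-)^\wedge$, and (c) a descent argument that a faithfully flat (here: completion) base change detects both the vanishing $R^i\epsilon_*=0$ and the isomorphism in degree zero. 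Once these foundational facts are assembled, the geometric content is entirely contained in Theorem~\ref{mv}, and the weakly toroidal case follows formally.
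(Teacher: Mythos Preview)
Your proposal is correct and follows essentially the same route as the paper: reduce via Artin approximation~\cite{Ar69} and \'etale base change to a weakly normal local model $X'=\Spec k[\cM]$, then invoke Theorem~\ref{mv}. The paper's proof is just these two sentences; your discussion of working directly over completions is a viable alternative, but the paper opts for the cleaner Artin-approximation reduction and does not spell out the ``bookkeeping'' you describe.
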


\begin{proof}
The statement is local, and invariant under \'etale base change. By~\cite{Ar69}, we
may suppose $X=\Spec k[\cM]$ is a weakly normal local model. Then 
we may apply Theorem~\ref{mv}.
\end{proof}

Thus, the filtered complex $(\tilde{\Omega}^*_X,F)$, with $F$ the naive filtration, is a canonical choice 
for the Du Bois complex of $X$.

\begin{cor}
Let $X$ have weakly toroidal singularities. Then $X$ has Du Bois singularities.
\end{cor}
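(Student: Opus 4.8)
The plan is to deduce the claim from Theorem~\ref{mvtl} applied with $p=0$, the only additional ingredient being the elementary identification $\cO_X=\tilde{\Omega}^0_X$. Recall that $X$ has Du Bois singularities by definition exactly when the natural morphism $\cO_X\to\underline{\Omega}^0_X$ in the derived category is a quasi-isomorphism, where $\underline{\Omega}^0_X=R\epsilon_*(\cO_{X_\bullet})=R\epsilon_*(\Omega^0_{X_\bullet})$ for a smooth simplicial resolution $\epsilon\colon X_\bullet\to X$ (independent of $\epsilon$ by Lemma~\ref{sh} and Theorem~\ref{mDB}, since $\Omega^0_{X_\bullet}=\cO_{X_\bullet}$). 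So it suffices to factor this morphism as $\cO_X\to\tilde{\Omega}^0_X\to R\epsilon_*(\Omega^0_{X_\bullet})$ and check that both arrows are quasi-isomorphisms.

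The second arrow $\tilde{\Omega}^0_X\to R\epsilon_*(\Omega^0_{X_\bullet})$ is a quasi-isomorphism: this is precisely Theorem~\ref{mvtl} in the case $p=0$, which gives $\tilde{\Omega}^0_X\isoto\epsilon_*(\Omega^0_{X_\bullet})$ and $R^i\epsilon_*(\Omega^0_{X_\bullet})=0$ for $i>0$. Thus the entire weight of the corollary rests on Theorem~\ref{mvtl}, and I expect this invocation — rather than any new argument — to be the substance of the proof; the rest is bookkeeping.

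It remains to see that the natural map $\cO_X\to\tilde{\Omega}^0_X$ (restriction of regular functions, which is defined since $X$ is reduced) is an isomorphism. The statement is local on $X$ and, as in the proof of Theorem~\ref{mvtl} (via Artin approximation~\cite{Ar69}), it suffices to verify it on completed local rings, so we may assume $X=\Spec k[\cM]$ is a weakly normal local model. For such $X$ the sheaf $\tilde{\Omega}^0_X$ has global sections $A^0(X)=\oplus_{m\in\cup_{\sigma\in\Delta}S_\sigma}\chi^m\cdot\wedge^0V_m=\oplus_{m\in\cup_{\sigma\in\Delta}S_\sigma}k\cdot\chi^m=k[\cM]=\Gamma(X,\cO_X)$, because $\wedge^0V_m=k$ for every $m$; and under this identification the map $\cO_X\to\tilde{\Omega}^0_X$ is the identity. (Equivalently, $\tilde{\Omega}^0_X$ is the pushforward of the structure sheaf of the weak normalization of $X$, which equals $\cO_X$ by weak normality.) Since a morphism of coherent $\cO_X$-modules that is an isomorphism on the completed stalk at every closed point is an isomorphism, we conclude $\cO_X=\tilde{\Omega}^0_X$ globally on $X$.

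Combining the two steps, $\cO_X=\tilde{\Omega}^0_X\isoto\underline{\Omega}^0_X$, so $X$ has Du Bois singularities. The only genuinely nontrivial input is Theorem~\ref{mvtl}; the mild point to watch is the legitimacy of passing to a local model in the identification $\cO_X=\tilde{\Omega}^0_X$, which is handled exactly as in that theorem's proof.
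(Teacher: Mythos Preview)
Your proof is correct and follows essentially the same approach as the paper: reduce the Du Bois condition to the identification $\cO_X=\tilde{\Omega}^0_X$ (via Theorem~\ref{mvtl} with $p=0$), then verify this identification locally on a weakly normal model using Artin approximation and the elementary computation $A^0(X)=k[\cM]=\Gamma(X,\cO_X)$. The paper's proof is more terse but identical in substance.
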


\begin{proof} We claim that $\cO_X=\tilde{\Omega}^0_X$.
The statement is local, and invariant under \'etale base change. By~\cite{Ar69}, we
may suppose $X=\Spec k[\cM]$ is a weakly normal local model.
By definition, $A^0(X)=\Gamma(X,\cO_X)$. Therefore the claim holds.
\end{proof}

\begin{lem}[Poincar\'e lemma]\label{pol}
Let $X/\C$ have weakly toroidal singularities. Then $\C_{X^{an}}\to \tilde{\Omega}^*_{X^{an}}$
is a quasi-isomorphism.
\end{lem}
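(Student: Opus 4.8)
The statement is local on $X^{an}$, and by the definition of weakly toroidal singularities together with the argument of Theorem~\ref{mvtl}, it suffices to treat the case $X = \Spec \C[\cM]$ with $\cM = (M,\Delta,(S_\sigma)_{\sigma\in\Delta})$ weakly normal. The plan is to verify exactness of the complex $(\tilde{\Omega}^*_{X^{an}}, d)$ in positive degrees and to check that the kernel in degree $0$ is the constant sheaf $\C$. Since $\tilde{\Omega}^p_X$ is the sheaf of $h$-differentials, and we have the explicit combinatorial description $\Gamma(X,\tilde{\Omega}^p_X) = A^p(X) = \oplus_{m\in|\cM|}\chi^m\cdot\wedge^p V_m$, with the differential $d(\chi^m\omega_m) = \chi^m\cdot(\frac{d\chi^m}{\chi^m}\wedge\omega_m)$, I would exploit that $d$ is homogeneous with respect to the $M$-grading: the whole complex splits as a direct sum over $m\in|\cM|$ of the finite-dimensional complexes $(\chi^m\cdot\wedge^\bullet V_m, d_m)$, where $d_m(\omega_m) = e_m\wedge\omega_m$ and $e_m = \frac{d\chi^m}{\chi^m}\in V_m$. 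Thus on stalks/sections the problem reduces to a purely linear-algebra computation of the cohomology of a Koszul-type complex $(\wedge^\bullet V_m, e_m\wedge -)$.

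The key observation is that $e_m = 0$ in $V_m$ precisely when $m$ lies in the torsion/linear part relevant to $\sigma_m$ — in fact $e_m$ is the image of $m$ under $M \to M_\R \to S_{\sigma_m}-S_{\sigma_m}$ tensored with $\C$ (using the description $V_m \cong \C\otimes_\Z (S_{\sigma_m}-S_{\sigma_m})$), so $e_m = 0$ iff $m = 0$ (since $m\in\relint\sigma_m$ and $m\ne 0$ forces $m$ nonzero in the span). Wait — one must be careful: $m$ could be a nonzero element mapping to $0$ only if $\sigma_m = 0$, i.e. $m = 0$. Hence for $m\ne 0$ we have $e_m\ne 0$, and then $(\wedge^\bullet V_m, e_m\wedge -)$ is the Koszul complex of a nonzero linear functional's ``wedge by a nonzero vector'', which is exact in all degrees (contracting homotopy: pick a linear form $\lambda$ with $\lambda(e_m)=1$ and use $\omega\mapsto \iota_\lambda\omega$, giving $\iota_\lambda(e_m\wedge\omega) + e_m\wedge\iota_\lambda\omega = \omega$). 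For $m = 0$ the complex is $(\wedge^\bullet V_0, 0)$ with zero differential, contributing $\wedge^p V_0$ in each degree $p$. So globally on $X$ the cohomology of $(A^*(X),d)$ is $\oplus_p \wedge^p V_0[-p]$ — but $V_0$ is the invariant $1$-forms on $\Spec\C[S_0-S_0]$, and $S_0 - S_0 = 0$ when $0\in\Delta$ is a point, so $V_0 = 0$ and we get exactly $\C$ in degree $0$, zero elsewhere. I would promote this from sections over the affine $X$ to the constant sheaf statement on $X^{an}$ by running the same graded/Koszul argument on the stalk at each point $x\in X^{an}$, using Remark~\ref{deco} to reduce to the germ where the relevant cone is the smallest cone $\tau$ and after splitting off the torus factor $T''$ one is reduced to a monoidal complex $\cM'$ with $0\in\Delta(\cM')$; the torus factor contributes its own de Rham complex, which is a (holomorphic, hence exact by the classical Poincar\'e lemma) resolution of $\C_{(T'')^{an}}$, and one combines the two via the K\"unneth-type splitting $(X,x)\simeq (T'',x'')\times(\Spec\C[\cM'],x')$.

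The main obstacle I expect is the bookkeeping at points $x$ that are \emph{not} in the closed orbit: there the stalk $\tilde{\Omega}^*_{X,x}$ mixes the genuinely non-normal, reducible combinatorics of $\cM$ near $x$ with the local-ring localization, and one must be sure the graded decomposition and the Koszul contracting homotopies glue into a single sheaf homotopy on a neighborhood rather than merely stalk-wise. The cleanest route is probably to invoke Remark~\ref{deco}: every closed point has a $T$-invariant affine neighborhood $U = \Spec\C[\cM_x]$ isomorphic to a product of a torus with the spectrum of a pointed monoidal complex, so the Poincar\'e lemma on $U^{an}$ follows from the affine computation above applied to $\cM'$ together with the classical holomorphic Poincar\'e lemma on the torus factor and a K\"unneth argument; since such $U$ form a basis of the topology, this yields the quasi-isomorphism of complexes of sheaves $\C_{X^{an}} \to \tilde{\Omega}^*_{X^{an}}$. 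One should also double-check that passing to the analytic topology does not disturb the combinatorial description of $\tilde{\Omega}^p$ — this is where one uses that $\tilde{\Omega}^p_X$ is coherent and that the analytification of the $h$-differentials behaves well, which is already implicit in the way $\tilde{\Omega}^p_{X^{an}}$ is being used.
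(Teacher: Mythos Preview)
Your graded Koszul computation correctly establishes that the complex of \emph{global algebraic sections} $(A^\bullet(X),d)$ has cohomology $\C$ concentrated in degree $0$ when $0\in\Delta$. But the lemma is a statement about the complex of \emph{analytic sheaves}: one must show that the stalk complex $(\tilde\Omega^*_{X^{an}})_x$ is a resolution of $\C$ at every $x\in X^{an}$. These are genuinely different problems. Already for $X=\bA^1$ your argument yields $H^*(\C[t]\to\C[t]\,dt)=\C$, yet that is not a proof of the holomorphic Poincar\'e lemma on a disk---the latter requires integrating convergent power series. Your proposal to ``run the same graded/Koszul argument on the stalk'' does not go through, because the analytic stalk is not $M$-graded: its elements are convergent series $\sum_m a_m\chi^m\omega_m$, and your contracting homotopy $\iota_{\lambda_m}$ varies with $m$ (no single linear form $\lambda$ satisfies $\lambda(e_m)=1$ for all $m\ne 0$), so there is no evident way to assemble the piecewise homotopies into a bounded operator on the stalk. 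The K\"unneth route via Remark~\ref{deco} does not close the gap either: after peeling off the torus factor (and note that the product splitting in Remark~\ref{deco} is only asserted under the extra hypothesis $S_\sigma=M\cap\sigma$), you are left with proving the analytic Poincar\'e lemma at the fixed point of the pointed factor $\Spec\C[\cM']$---which is precisely the statement you set out to prove, now in the special case $0\in\Delta$, and the algebraic acyclicity of $A^\bullet(\cM')$ still does not settle it.

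The paper sidesteps all of this with a four-arrow diagram: for a smooth simplicial resolution $\epsilon\colon X_\bullet\to X$, three of the maps among $\C_{X^{an}}$, $\tilde\Omega^*_{X^{an}}$, $R\epsilon_*\C_{X_\bullet^{an}}$, $R\epsilon_*\Omega^*_{X_\bullet^{an}}$ are already known to be quasi-isomorphisms (the resolution property for the constant sheaf, Theorem~\ref{mvtl} analytified via properness of $\epsilon$ and coherence, and the classical holomorphic Poincar\'e lemma on each smooth $X_n^{an}$), hence so is the fourth. No stalk computation on the singular space is ever performed.
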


\begin{proof} Let $\epsilon\colon X_\bullet\to X$ be a smooth simplicial resolution. 
Consider the commutative diagram
\[ 
\xymatrix{
         R\epsilon_*\C_{X_\bullet^{an}}   \ar[r]  & R\epsilon_*\Omega^*_{X_\bullet^{an}}     \\
        \C_{X^{an}}  \ar[r]  \ar[u]        &  \tilde{\Omega}^*_{X^{an}}   \ar[u]               
} \]
The left vertical arrow is a quasi-isomorphism from the definition of $\epsilon$. The right
vertical arrow is a quasi-isomorphism by Theorem~\ref{mvtl}. The top horizontal arrow is
a quasi-isomorphism, by the Poincar\'e lemma on each component $X_n^{an}\ (n\ge 0)$.
Therefore the bottom horizontal arrow is a quasi-isomorphism.
\end{proof}

\begin{thm}\label{E1deg}
Let $X/\C$ be proper, with weakly toroidal singularities. Then the spectral sequence
$$
E^{pq}_1=H^q(X,\tilde{\Omega}^p_X)\Longrightarrow \Gr^p_F H^{p+q}(X^{an};\C)
$$
degenerates at $E_1$, and converges to the Hodge filtration on the cohomology groups of $X^{an}$.
\end{thm}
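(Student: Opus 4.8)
The plan is to deduce this from Theorem~\ref{mvtl} together with the classical degeneration result of Du Bois~\cite{DB81}. By Theorem~\ref{mvtl}, for a smooth simplicial resolution $\epsilon\colon X_\bullet\to X$ the natural map $\tilde{\Omega}^p_X\to R\epsilon_*(\Omega^p_{X_\bullet})$ is a quasi-isomorphism, and moreover (arguing filtered-wise exactly as in Corollary~\ref{cDB}, which is stated for the affine toric case but whose proof is purely formal given the vanishing) the map of filtered complexes $(\tilde{\Omega}^*_X,F_{naive})\to R\epsilon_*(\Omega^*_{X_\bullet},F)$ is a filtered quasi-isomorphism. Hence $(\tilde{\Omega}^*_X,F_{naive})$ represents the Deligne--Du Bois complex $(\underline{\Omega}^*_X,F)$ in the filtered derived category. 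Since the vanishing $R^i\epsilon_*\Omega^p_{X_\bullet}=0$ for $i>0$ forces $F$ to be (equivalent to) the naive filtration, we get $\Gr^p_F\underline{\Omega}^*_X[p]\simeq \tilde{\Omega}^p_X$ for each $p$, a single sheaf in degree $p$ rather than a genuine complex.

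First I would record the identification of hypercohomology groups: $\bH^q(X,\Gr^p_F\underline{\Omega}^*_X[p])=\bH^q(X,\tilde{\Omega}^p_X[0])=H^q(X,\tilde{\Omega}^p_X)$, using that $X$ is a scheme (so hypercohomology of a sheaf concentrated in one degree is ordinary sheaf cohomology). Next I would invoke Du Bois's theorem~\cite[3.17, 4.2]{DB81} (or Deligne~\cite{Del74}): for $X/\C$ proper, the spectral sequence of the filtered complex $(\underline{\Omega}^*_X,F)$, namely $E_1^{pq}=\bH^q(X,\Gr^p_F\underline{\Omega}^*_X[p])\Rightarrow \Gr^p_F H^{p+q}(X^{an},\C)$, degenerates at $E_1$ and the limit filtration is the Hodge filtration of Deligne's mixed Hodge structure on $H^*(X^{an},\C)$. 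This is exactly the spectral sequence discussed in the Introduction; its existence, $E_1$-degeneration, and identification of the abutment are general facts about the Deligne--Du Bois complex and do not use weak toroidality — only properness over $\C$.

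Combining the two: substituting the identification $\Gr^p_F\underline{\Omega}^*_X[p]\simeq \tilde{\Omega}^p_X$ into Du Bois's spectral sequence converts it into the asserted spectral sequence $E_1^{pq}=H^q(X,\tilde{\Omega}^p_X)\Rightarrow \Gr^p_F H^{p+q}(X^{an},\C)$, which therefore degenerates at $E_1$ and converges to the Hodge filtration. One should note that $X$ need not be projective here; properness suffices because the Hodge-theoretic input (existence of the mixed Hodge structure and $E_1$-degeneration of the Hodge-to-de Rham spectral sequence for the Du Bois complex) is established by Du Bois and Deligne for arbitrary proper $\C$-schemes via simplicial resolution.

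The main obstacle, such as it is, is bookkeeping rather than conceptual: one must be careful that Corollary~\ref{cDB} — stated in the excerpt only for the affine spectrum of a toric face ring — genuinely upgrades to a \emph{filtered} quasi-isomorphism in the weakly toroidal case. Since that statement is local, $F_{naive}$-compatibility of $\epsilon_0^*$ was already checked in the proof of Theorem~\ref{mv} via Lemma~\ref{fct'}, and Theorem~\ref{mvtl} localizes the vanishing, this upgrade is routine; I would state it as a corollary of Theorem~\ref{mvtl} before proving Theorem~\ref{E1deg}. The only other point requiring a word is that the $d$ in $\tilde{\Omega}^*_X$ (defined via $d(\chi^m\omega_m)=\chi^m\frac{d\chi^m}{\chi^m}\wedge\omega_m$) is compatible with the de Rham differential on each $X_n$ under $\epsilon^*$ — but this is immediate from the combinatorial description of forms on smooth toric varieties recalled in Section~1, since $\epsilon_0^*$ intertwines the two differentials by construction.
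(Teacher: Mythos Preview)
Your proposal is correct and takes essentially the same approach as the paper: identify $(\tilde{\Omega}^*_X,F_{naive})$ with the Deligne--Du Bois complex via Theorem~\ref{mvtl}, and then invoke the general $E_1$-degeneration from~\cite{DB81}/\cite{Del74}. The paper's proof differs only in presentation: rather than citing the Du Bois spectral sequence on $X$ directly, it works upstairs on the simplicial resolution $X_\bullet$, quotes Deligne's $E_1$-degeneration for the cohomological mixed Hodge complex $(\Omega^*_{X_\bullet},W_{\mathrm{triv}},F)$ on $X_\bullet$~\cite[8.1.12, 8.1.15, 8.1.9]{Del74}, and then pushes the statement down to $X$ using Theorem~\ref{mvtl} together with the Poincar\'e lemma (Lemma~\ref{pol}) to match the abutment with $H^{p+q}(X^{an},\C)$; your packaging via the Du Bois complex absorbs this last comparison into the cited general machinery.
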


\begin{proof} This follows from Theorem~\ref{mvtl} and~\cite{DB81}.
More precisely, let $\epsilon\colon X_\bullet\to X$ be a smooth simplicial resolution.
Then $(\Omega_{X_\bullet}^*,W,F)$, with $W$ the trivial filtration, is the analytical part of a 
cohomological moved Hodge $\Z$-complex on $X_\bullet$ (see~\cite[Example 8.1.12]{Del74} with 
$Y_\bullet=\emptyset$). Since $X_\bullet$ is proper, we obtain by~\cite[Theorem 8.1.15.(i), Scolie 8.1.9.(v)]{Del74}
a spectral sequence 
$$
E^{pq}_1=H^q(X_\bullet,\tilde{\Omega}^p_{X_\bullet})\Longrightarrow \Gr^p_F H^{p+q}(X_\bullet^{an};\C)
$$
which degenerates at $E_1$, and converges to the Hodge filtration on the cohomology groups of $X_\bullet^{an}$.
By Theorem~\ref{mvtl} and Lemma~\ref{pol}, this pushes down on $X$ to our claim.
\end{proof}

Finally, we check that $\tilde{\Omega}^p_X$ coincides with the sheaves defined by Danilov~\cite{Dan91} and Ishida~\cite{Ish85}:
\begin{itemize}
\item Suppose $X$ is toroidal. If $f\colon X'\to X$ is a desingularization and $w\colon U\subseteq X$ is the inclusion of the 
smooth locus, then  
$
\tilde{\Omega}^p_X=f_*(\Omega^p_{X'})=w_*(\Omega^p_U).
$ 

Indeed, by~\cite{Ar69} and \'etale base change, we may suppose $X$ is an affine toric normal variety. 
We may replace $f$ by a toric desingularization. Danilov shows in~\cite[Lemma 1.5]{Dan78} that 
$A^p(X)=\Gamma(X',\Omega^p_{X'})=\Gamma(U,\Omega^p_U)$. 

\item Suppose $X$ is a torus invariant closed reduced subvariety of an affine toric normal variety. 
Then $A^p(X)$ coincides with Ishida's module $\tilde{\Omega}^p_{B(\Phi)}$ defined in~\cite[page 119]{Ish85}. 
\end{itemize}


\subsection{Weakly toroidal pairs}


A {\em weakly toroidal pair} $(X,Y)$ consists of a weakly normal algebraic variety $X/k$ and a closed
reduced subvariety $Y\subseteq X$ such that for every closed point $x\in X$ there exists an isomorphism
of complete local $k$-algebras $\cO_{X,x}^\wedge\simeq \cO_{X',x'}^\wedge$, mapping
$\cI_{Y,x}^\wedge$ onto $\cI_{Y',x'}^\wedge$, 
where $X'=\Spec k[\cM]$ is the affine variety associated to
some monoidal complex $\cM=(M,\Delta,(S_\sigma)_{\sigma\in \Delta})$,
$Y'=\cup_{\sigma\in \Delta'}X'_\sigma\subseteq X'$ is a closed reduced subvariety which is invariant under
the action of the torus $\Spec k[M]$, and $x'$ is a closed point contained in the closed orbit of $X'$. 

\begin{exmp}
Let $\cM=(M,\Delta,(S_\sigma)_{\sigma\in \Delta})$ and $\Delta'$ a subfan of $\Delta$.
Consider $X=\Spec k[\cM]$ and $Y=\cup_{\sigma\in \Delta'}X_\sigma$. Then $(X,Y)$ is 
a weakly toroidal pair if and only if $X$ is weakly normal.
\end{exmp}

\begin{exmp}
Suppose $X$ is weakly toroidal. Let $\Sing X$ and $C$ be the singular and non-normal locus of $X$, respectively.
Then $(X,\Sing X)$ and $(X,C)$ are weakly toroidal pairs.
\end{exmp}

If $(X,Y)$ is a weakly toroidal pair, then $X$ and $Y$ are weakly toroidal.

Let $(X,Y)$ be a weakly toroidal pair. Define $\tilde{\Omega}^p_{(X,Y)}=\Ker(\tilde{\Omega}^p_X\to \tilde{\Omega}^p_Y)$.

\begin{lem} Let $(X,Y)$ be a weakly toroidal pair. We have a short exact sequence 
$$
0\to \tilde{\Omega}^p_{(X,Y)} \to \tilde{\Omega}^p_X\to \tilde{\Omega}^p_Y \to 0.
$$
\end{lem}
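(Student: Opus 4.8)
The plan is to reduce the statement to the already-established local computation. The claim is exactly surjectivity of the restriction map $\tilde{\Omega}^p_X\to \tilde{\Omega}^p_Y$ (left-exactness is automatic from the definition of the kernel), so everything reduces to showing this restriction is surjective as a map of $\cO_X$-modules. Surjectivity of a map of coherent sheaves may be checked on completed stalks at closed points, so first I would fix a closed point $x\in X$ and pass to the complete local ring $\cO_{X,x}^\wedge\simeq \cO_{X',x'}^\wedge$, where $(X',x')$ is a weakly normal local model as in the definition of a weakly toroidal pair, with $Y'=\cup_{\sigma\in\Delta'}X'_\sigma$ the corresponding invariant subvariety and $x'$ in the closed orbit.

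The key point is that the formation of $\tilde{\Omega}^p$ commutes with this completion. Since $\tilde{\Omega}^p_X$ is the sheaf of $h$-differentials (equivalently, $h^0$ of the Du Bois complex), and both its definition via a smooth simplicial resolution and the statement of Theorem~\ref{mvtl} are local and stable under \'etale base change (by~\cite{Ar69}), one gets $(\tilde{\Omega}^p_X)_x^\wedge\simeq (\tilde{\Omega}^p_{X'})_{x'}^\wedge$, and compatibly $(\tilde{\Omega}^p_Y)_x^\wedge\simeq (\tilde{\Omega}^p_{Y'})_{x'}^\wedge$ because the isomorphism $\cO_{X,x}^\wedge\simeq \cO_{X',x'}^\wedge$ carries $\cI_{Y,x}^\wedge$ onto $\cI_{Y',x'}^\wedge$, so it identifies the pairs $(X,Y)$ and $(X',Y')$ near $x$ and $x'$. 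Hence it suffices to prove that $\tilde{\Omega}^p_{X'}\to \tilde{\Omega}^p_{Y'}$ is surjective for the toric face ring model. But this was already done in Section~3-B: for $X'=\Spec k[\cM]$ weakly normal and $Y'=\Spec k[\cM']$ with $\Delta'\subseteq\Delta$ a subfan, the restriction $A^p(X')\to A^p(Y')$ is surjective, with kernel $A^p(X',Y')=\oplus_{m\in \cup_{\sigma\in\Delta}S_\sigma\setminus\cup_{\tau\in\Delta'}S_\tau}\chi^m\cdot\wedge^pV_m$, giving the short exact sequence
$$
0\to \tilde{\Omega}^p_{(X',Y')}\to \tilde{\Omega}^p_{X'}\to \tilde{\Omega}^p_{Y'}\to 0.
$$
Pulling this back through the isomorphism of completed local rings yields surjectivity of $(\tilde{\Omega}^p_X)_x^\wedge\to (\tilde{\Omega}^p_Y)_x^\wedge$ at every closed point $x$, hence surjectivity of $\tilde{\Omega}^p_X\to\tilde{\Omega}^p_Y$, and the short exact sequence follows with $\tilde{\Omega}^p_{(X,Y)}$ being the kernel by definition.

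The main obstacle, and the one point deserving care, is justifying that $\tilde{\Omega}^p$ is compatible with completion and with the identification of $(X,Y)$ with its local model $(X',Y')$ — in other words, that the sheaf of $h$-differentials is a local invariant of the completed local ring together with the ideal of $Y$. This rests on the functoriality of $\tilde{\Omega}^p$ established in Section~1-B together with \cite{Ar69} (Artin approximation) to replace the complete local isomorphism by a common \'etale neighborhood, exactly as in the proof of Theorem~\ref{mvtl}; since the isomorphism carries $\cI_Y$ onto $\cI_{Y'}$, it also identifies $\tilde{\Omega}^p_Y$ with $\tilde{\Omega}^p_{Y'}$ compatibly with the restriction maps. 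Once this is in place, the rest is the purely combinatorial surjectivity $A^p(X')\twoheadrightarrow A^p(Y')$, which is immediate from the explicit monomial descriptions.
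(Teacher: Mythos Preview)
Your proposal is correct and follows essentially the same approach as the paper: reduce via Artin approximation and \'etale base change to the local toric face ring model, then invoke the combinatorial surjectivity $A^p(X')\twoheadrightarrow A^p(Y')$ established in Section~3. The paper's own proof is a one-line version of exactly this argument; your write-up simply spells out more carefully why the reduction is legitimate (checking surjectivity on completed stalks, and using~\cite{Ar69} to trade the complete-local isomorphism for a common \'etale neighborhood compatible with the ideals of $Y$ and $Y'$).
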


\begin{proof}
By \'etale base change and~\cite{Ar69}, we may suppose $(X,Y)$ is a local model. Then 
$A^p(X)\to A^p(Y)$ is surjective, by the combinatorial formulas for the two modules.
\end{proof}

\begin{thm}\label{mvtlpr}
Let $(X,Y)$ be a weakly toroidal pair. Let $\epsilon\colon X_\bullet\to X$ be a smooth simplicial resolution,
such that $\epsilon^{-1}(Y)=Y_\bullet$ is locally on $X_\bullet$ either empty, or a normal crossing divisor.
Then $\tilde{\Omega}^p_{(X,Y)} \to R\epsilon_*(\Omega^p_{(X_\bullet,Y_\bullet)})$ is a quasi-isomorphism.
\end{thm}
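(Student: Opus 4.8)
The plan is to reduce the relative statement to the absolute one (Theorem~\ref{mvtl}) together with the structure of the Du Bois complex of a pair. Since the assertion is local on $X$ and invariant under \'etale base change, by~\cite{Ar69} I may assume $(X,Y)$ is a weakly normal local model, i.e. $X=\Spec k[\cM]$ and $Y=\cup_{\sigma\in\Delta'}X_\sigma$ for a subfan $\Delta'\subset\Delta$, with $x$ a point of the closed orbit. In this situation I have at my disposal the combinatorial sheaf $\tilde{\Omega}^p_{(X,Y)}$ constructed in Section 3-A, sitting in the short exact sequence $0\to\tilde{\Omega}^p_{(X,Y)}\to\tilde{\Omega}^p_X\to\tilde{\Omega}^p_Y\to 0$, and I know (Section 3-A) that it is the kernel of the naive-filtration model, with a filtered quasi-isomorphism $(\tilde{\Omega}^*_{(X,Y)},F)\to(\underline{\Omega}^*_{(X,Y)},F)$ onto the Du Bois complex of the pair.

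The core of the argument is then a three-term comparison. Given the smooth simplicial resolution $\epsilon\colon X_\bullet\to X$ with $Y_\bullet=\epsilon^{-1}(Y)$ locally empty or a normal crossing divisor on $X_\bullet$, I would form the morphism of short exact sequences of complexes
\[
\xymatrix{
0 \ar[r] & R\epsilon_*\Omega^p_{(X_\bullet,Y_\bullet)} \ar[r] & R\epsilon_*\Omega^p_{X_\bullet} \ar[r] & R\epsilon_*\Omega^p_{Y_\bullet} \ar[r] & 0 \\
0 \ar[r] & \tilde{\Omega}^p_{(X,Y)} \ar[r]\ar[u] & \tilde{\Omega}^p_X \ar[r]\ar[u]^\beta & \tilde{\Omega}^p_Y \ar[r]\ar[u]^\gamma & 0
}
\]
The map $\beta$ is a quasi-isomorphism by Theorem~\ref{mvtl} (or Theorem~\ref{mv}) applied to $X$, and $\gamma$ is a quasi-isomorphism by the same theorem applied to $Y$, which is again weakly toroidal. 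Here I must be careful that the top row is genuinely an exact triangle: this is where the hypothesis that $Y_\bullet$ is locally a normal crossing divisor (or empty) on each $X_n$ enters, guaranteeing that $\Omega^p_{(X_n,Y_n)}:=\Omega^p_{X_n}(\log Y_n)(-Y_n)$ behaves well and that the residue sequence $0\to\Omega^p_{(X_n,Y_n)}\to\Omega^p_{X_n}\to\Omega^p_{Y_n}\to 0$ is exact; pushing forward and taking the simplicial totalization preserves the triangle. Once the top row is an exact triangle and $\beta,\gamma$ are quasi-isomorphisms, the five lemma (in the derived category, i.e. the triangulated five-lemma on the mapping cones) forces the left vertical arrow to be a quasi-isomorphism as well.

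I would also record the compatibility that makes the left-hand vertical arrow well defined in the first place: the pullback $\epsilon_0^*$ of Lemma~\ref{fct'} sends $\tilde{\Omega}^p_{(X,Y)}$ into $\Ker({\epsilon_0}_*\Omega^p_{(X_0,Y_0)}\rightrightarrows{\epsilon_1}_*\Omega^p_{(X_1,Y_1)})$, since on the locus $X\setminus Y$ the resolution is an isomorphism over the smooth locus of $X$ and the vanishing along $Y$ is preserved by pullback. The part I expect to be the genuine obstacle is precisely the verification that the upper exact sequence of complexes is an exact triangle in the filtered derived category --- i.e. that forming the ``log-with-vanishing'' complex $\Omega^*_{(X_\bullet,Y_\bullet)}$ commutes with the relevant operations and that the normal-crossing hypothesis on $Y_\bullet$ is strong enough; everything else is a formal consequence of the absolute vanishing theorem and the combinatorial surjectivity $A^p(X)\to A^p(Y)$. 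Once that triangle is in hand, the conclusion follows as in the proof of Theorem~\ref{mv}, and in the proper case it feeds into the degeneration statement exactly as in Theorem~\ref{E1deg}.
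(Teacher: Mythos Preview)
Your proposal is correct and follows essentially the same route as the paper: form the morphism of triangles with $\tilde{\Omega}^p_{(X,Y)}\to\tilde{\Omega}^p_X\to\tilde{\Omega}^p_Y$ below and $R\epsilon_*\Omega^p_{(X_\bullet,Y_\bullet)}\to R\epsilon_*\Omega^p_{X_\bullet}\to R\epsilon_*\Omega^p_{Y_\bullet}$ above, invoke Theorem~\ref{mvtl} for the two outer vertical arrows, and conclude by the five-lemma. Your preliminary reduction to a local model via~\cite{Ar69} is harmless but unnecessary here, since Theorem~\ref{mvtl} and the lemma giving exactness of the bottom row already absorb that reduction; the paper's proof works directly at the weakly toroidal level.
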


\begin{proof} Consider the commutative diagram
 \[ 
\xymatrix{
    0 \ar[r] & R\epsilon_*\Omega^p_{(X_\bullet,Y_\bullet)} \ar[r] &  R\epsilon_*\Omega^p_{X_\bullet} \ar[r] & 
     R\epsilon_*\Omega^p_{Y_\bullet} \ar[r] & 0 \\
    0 \ar[r] & \tilde{\Omega}^p_{(X_\bullet,Y_\bullet)} \ar[r] \ar[u]^\alpha & \tilde{\Omega}^p_X  \ar[r] \ar[u]^\beta & \tilde{\Omega}^p_Y \ar[r] \ar[u]^\gamma & 0      
} \]
where the top raw is an exact triangle, and the bottom row is a short exact sequence. 
Since $\beta,\gamma$ are quasi-isomorphisms, so is $\alpha$.
\end{proof}

Thus, the filtered complex $(\tilde{\Omega}^*_{(X,Y)},F)$, with $F$ the naive filtration, is a canonical choice 
for the Du Bois complex of the pair $(X,Y)$ (see~\cite{Kov11}).

\begin{cor}
Let $(X,Y)$ be a weakly toroidal pair. Then $(X,Y)$ has Du Bois singularities.
\end{cor}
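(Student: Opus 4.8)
The statement to prove is that a weakly toroidal pair $(X,Y)$ has Du Bois singularities; that is, the natural morphism $\cO_X \to \tilde{\Omega}^0_X$ is a quasi-isomorphism, and more generally that the pair $(X,Y)$ has Du Bois singularities in the relative sense (cf.~\cite{Kov11}), meaning $\cI_{Y\subset X}\to \underline{\Omega}^0_{(X,Y)}$ is a quasi-isomorphism. The plan is to deduce everything from Theorem~\ref{mvtlpr} together with the case $p=0$ of the underlying combinatorial computation. Concretely, I would first recall that, by Theorem~\ref{mvtlpr}, the filtered complex $(\tilde{\Omega}^*_{(X,Y)},F)$ with its naive filtration is a representative of the Du Bois complex $\underline{\Omega}^*_{(X,Y)}$ of the pair. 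Taking the $0$-th graded piece of the filtration, this gives a quasi-isomorphism $\tilde{\Omega}^0_{(X,Y)}\to \underline{\Omega}^0_{(X,Y)}$ in the derived category.

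The second step is to identify $\tilde{\Omega}^0_{(X,Y)}$ explicitly as the ideal sheaf $\cI_{Y\subset X}$. This is again a local statement, invariant under \'etale base change, so by~\cite{Ar69} we may assume $(X,Y)$ is a local model $(X',x')$ with $X'=\Spec k[\cM]$ and $Y'=\cup_{\sigma\in\Delta'}X'_\sigma$. From the combinatorial description in Section~3-B we have $A^0(X')=\Gamma(X',\cO_{X'})$ and $A^0(Y')=\Gamma(Y',\cO_{Y'})$, where the restriction map $A^0(X')\to A^0(Y')$ is exactly the restriction of functions (it sends $\chi^m$ to $\chi^m$ if $m\in\cup_{\tau\in\Delta'}S_\tau$ and to $0$ otherwise). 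Hence its kernel $A^0(X',Y')$ is precisely $\Gamma(X',\cI_{Y'\subset X'})$, so $\tilde{\Omega}^0_{(X,Y)}=\cI_{Y\subset X}$ globally. Combining with the first step, $\cI_{Y\subset X}\to\underline{\Omega}^0_{(X,Y)}$ is a quasi-isomorphism, which is exactly the assertion that $(X,Y)$ is Du Bois.

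For the absolute statement about $X$ alone one applies the same argument with $Y=\emptyset$ (or cites the corollary to Theorem~\ref{mvtl} already recorded in the excerpt): $\cO_X=\tilde{\Omega}^0_X\isoto\underline{\Omega}^0_X$. The only point that requires a little care, and which I expect to be the main (minor) obstacle, is making sure the representative $(\tilde{\Omega}^*_{(X,Y)},F)$ produced by Theorem~\ref{mvtlpr} really is filtered-quasi-isomorphic to the Du Bois complex of the \emph{pair} in the sense of~\cite{Kov11}, not merely to each piece separately; this is handled by the short exact sequence $0\to\tilde{\Omega}^p_{(X,Y)}\to\tilde{\Omega}^p_X\to\tilde{\Omega}^p_Y\to 0$ fitting into the commutative ladder of the proof of Theorem~\ref{mvtlpr}, which realizes the defining triangle $\underline{\Omega}^*_{(X,Y)}\to\underline{\Omega}^*_X\to\underline{\Omega}^*_Y\xrightarrow{+1}$ compatibly with filtrations. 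Once that compatibility is in place, everything reduces to the degree-zero identification above, and the corollary follows.
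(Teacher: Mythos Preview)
Your proposal is correct and follows essentially the same approach as the paper: reduce via \'etale base change and Artin approximation to a local model, then use the combinatorial identity $A^0(X,Y)=\Gamma(X,\cI_{Y\subset X})$ to conclude $\cI_{Y\subset X}=\tilde{\Omega}^0_{(X,Y)}$. The paper's proof is terser and does not spell out the compatibility with the defining triangle in~\cite{Kov11}, but your added remarks on that point are harmless and the core argument is identical.
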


\begin{proof} We have to show that $\cI_{Y\subset X}=\tilde{\Omega}^0_{(X,Y)}$.
The statement is local, and invariant under \'etale base change. By~\cite{Ar69}, we
may suppose $X=\Spec k[\cM]$ is a weakly normal local model, and $Y\subset X$ is a 
torus invariant close subvariety. By definition, $A^0(X,Y)=\Gamma(X,\cI_{Y\subset X})$. 
Therefore the claim holds.
\end{proof}

As above, we obtain the Poincar\'e lemma for pairs: $\C_{(X^{an},Y^{an})}\isoto \tilde{\Omega}^*_{(X^{an},Y^{an})}$.
Similarly, we obtain

\begin{thm}\label{E1degpairs}
Let $(X,Y)$ be a weakly toroidal pair, with $X/\C$ proper. Then the spectral sequence
$$
E^{pq}_1=H^q(X,\tilde{\Omega}^p_{(X,Y)})\Longrightarrow \Gr^p_F H^{p+q}(X^{an},Y^{an};\C)
$$
degenerates at $E_1$, and converges to the Hodge filtration on the relative cohomology groups of $(X^{an},Y^{an})$.
\end{thm}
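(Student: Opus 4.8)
The strategy is to reduce Theorem~\ref{E1degpairs} to the simplicial-resolution case exactly as Theorem~\ref{E1deg} was reduced, only now carrying along the closed subvariety. First I would choose a smooth simplicial resolution $\epsilon\colon X_\bullet\to X$ with the property that $Y_\bullet=\epsilon^{-1}(Y)$ is, locally on each component $X_n$, either empty or a normal crossing divisor; such a resolution exists by Hironaka, and it is the hypothesis under which Theorem~\ref{mvtlpr} applies. Then by Theorem~\ref{mvtlpr} the natural map $\tilde{\Omega}^p_{(X,Y)}\to R\epsilon_*\Omega^p_{(X_\bullet,Y_\bullet)}$ is a quasi-isomorphism for every $p$; together with the naive filtrations this gives a filtered quasi-isomorphism $(\tilde{\Omega}^*_{(X,Y)},F)\to R\epsilon_*(\Omega^*_{(X_\bullet,Y_\bullet)},F)$, so the Du Bois complex of the pair $(X,Y)$ is represented by $(\tilde{\Omega}^*_{(X,Y)},F)$ with $F$ the naive filtration.

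Next I would invoke Deligne's mixed Hodge theory for simplicial schemes. Since $X/\C$ is proper, so is $X_\bullet$, and the data $(\Omega^*_{X_\bullet}(\log Y_\bullet)(-Y_\bullet),W,F)$ — the logarithmic de Rham complex along $Y_\bullet$ twisted down, i.e. the relative de Rham complex computing $H^*(X_\bullet^{an},Y_\bullet^{an})$ — is (the analytic part of) a cohomological mixed Hodge complex on $X_\bullet$; this is \cite[Example 8.1.12]{Del74} applied with the pair $(X_\bullet,Y_\bullet)$ rather than $Y_\bullet=\emptyset$. By \cite[Theorem 8.1.15.(i), Scolie 8.1.9.(v)]{Del74} the Hodge-to-de Rham spectral sequence of this complex,
$$
E_1^{pq}=H^q\bigl(X_\bullet,\Omega^p_{(X_\bullet,Y_\bullet)}\bigr)\Longrightarrow \Gr_F^p H^{p+q}(X_\bullet^{an},Y_\bullet^{an};\C),
$$
degenerates at $E_1$ and its abutment filtration is the Hodge filtration on $H^*(X_\bullet^{an},Y_\bullet^{an})$. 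Finally I would push this down to $X$: by the Poincaré lemma for pairs, $\C_{(X^{an},Y^{an})}\isoto \tilde{\Omega}^*_{(X^{an},Y^{an})}$, hence $H^*(X^{an},Y^{an};\C)=\bH^*(X,\tilde{\Omega}^*_{(X,Y)})$; and by Theorem~\ref{mvtlpr} this equals $\bH^*(X_\bullet,\Omega^*_{(X_\bullet,Y_\bullet)})=H^*(X_\bullet^{an},Y_\bullet^{an};\C)$, compatibly with $F$. Transporting the $E_1$-degeneration and the identification of the limit filtration across this chain of filtered quasi-isomorphisms yields the claimed spectral sequence $E_1^{pq}=H^q(X,\tilde{\Omega}^p_{(X,Y)})\Rightarrow \Gr_F^p H^{p+q}(X^{an},Y^{an};\C)$.

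**Main obstacle.** The routine parts are the formal transport of a degenerate filtered spectral sequence across filtered quasi-isomorphisms and the bookkeeping with naive versus Hodge filtrations. The step that needs the most care is checking that $(\Omega^*_{(X_\bullet,Y_\bullet)},W,F)$ genuinely underlies a cohomological mixed Hodge complex on the simplicial scheme $X_\bullet$ in the sense required by \cite[8.1.15]{Del74} — in particular that the weight filtration $W$ (which we make no claim about in the end) is defined consistently across all face maps, and that the hypothesis ``$Y_\bullet$ locally a normal crossing divisor or empty'' is exactly what makes the relative logarithmic complex on each $X_n$ a Hodge–de Rham complex whose cohomology carries a pure Hodge structure in each weight graded piece. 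Since $Y_\bullet$ may fail to be globally a normal crossing divisor (only locally so) and may be empty on some components, one should phrase Deligne's construction componentwise and glue; this is why the statement restricts to such resolutions, and why I would cite \cite[Example 8.1.12]{Del74} directly rather than reprove it.
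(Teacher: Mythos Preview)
Your proposal is correct and follows exactly the approach the paper intends: the paper gives no explicit proof of Theorem~\ref{E1degpairs} beyond the words ``Similarly, we obtain,'' meaning the argument is the obvious adaptation of the proof of Theorem~\ref{E1deg}, replacing Theorem~\ref{mvtl} by Theorem~\ref{mvtlpr}, the Poincar\'e lemma by its pair version, and invoking \cite[Example 8.1.12]{Del74} with a nonempty $Y_\bullet$ rather than $Y_\bullet=\emptyset$. You have spelled out precisely this, including the correct choice of resolution with $Y_\bullet$ locally normal crossings so that Theorem~\ref{mvtlpr} applies.
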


We can also generalize~\cite[Propositions 1.8, 2.8]{Dan78} as follows:

\begin{prop}
Let $(X',Y')$ and $(X,Y)$ be weakly toroidal pairs. Let $f\colon X'\to X$ be a proper surjective morphism
such that $Y'=f^{-1}(Y)$ and $f\colon X'\setminus Y'\to X\setminus Y$ is an isomorphism. Then 
$$
\tilde{\Omega}^p_{(X,Y)} \to Rf_*\tilde{\Omega}^p_{(X',Y')}
$$
is a quasi-isomorphism.
\end{prop}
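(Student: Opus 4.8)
The plan is to reduce the statement to the absolute (non-relative) case already established, namely Theorem~\ref{mvtl}, together with the basic commutative-algebra fact that the Deligne--Du Bois complex of a pair fits into an exact triangle with the Du Bois complexes of $X$ and $Y$. First I would choose a smooth simplicial resolution $\epsilon\colon X_\bullet\to X$ such that $\epsilon^{-1}(Y)=Y_\bullet$ is locally either empty or a normal crossings divisor, and such that it factors through $f$; this is possible by Lemma~\ref{sh} (taking a common refinement of a resolution of $X$ and one of $X'$) together with Hironaka, after possibly further blowing up so that the preimage of $Y$ becomes NC. Call the induced resolution of $X'$ by $\epsilon'\colon X'_\bullet\to X'$, so that we have $f_\bullet\colon X'_\bullet\to X_\bullet$ over $f$, with $Y'_\bullet=f_\bullet^{-1}(Y_\bullet)=\epsilon'^{-1}(Y')$.

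Next I would write down the commutative square
\[
\xymatrix{
\tilde{\Omega}^p_{(X,Y)} \ar[r] \ar[d] & Rf_*\tilde{\Omega}^p_{(X',Y')} \ar[d] \\
R\epsilon_*\Omega^p_{(X_\bullet,Y_\bullet)} \ar[r] & Rf_* R\epsilon'_*\Omega^p_{(X'_\bullet,Y'_\bullet)}
}
\]
and argue that three of the four arrows are quasi-isomorphisms, hence so is the fourth. The left vertical arrow is a quasi-isomorphism by Theorem~\ref{mvtlpr}; the right vertical arrow is a quasi-isomorphism after applying $Rf_*$ to the relative version of Theorem~\ref{mvtlpr} on $(X',Y')$ (using the chosen $\epsilon'$). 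For the bottom horizontal arrow, since $Rf_*R\epsilon'_*=R(f\circ\epsilon')_* = R(\epsilon\circ f_\bullet)_* = R\epsilon_* Rf_{\bullet *}$, it suffices to show that $\Omega^p_{(X_\bullet,Y_\bullet)}\to Rf_{\bullet *}\Omega^p_{(X'_\bullet,Y'_\bullet)}$ is a quasi-isomorphism, componentwise; and here each $f_n\colon X'_n\to X_n$ is a proper birational morphism of smooth varieties with $Y'_n=f_n^{-1}(Y_n)$ an NC divisor mapping isomorphically over $X_n\setminus Y_n$, so this reduces to the standard log-smooth statement $\Omega^p_{(X_n,Y_n)}\isoto Rf_{n*}\Omega^p_{(X'_n,Y'_n)}$, which is exactly the content used in the proof of \cite[Proposition 3.9]{DB81} (a log-differential analogue of $R^if_*\Omega^p_{X'_n}=0$ for $i>0$, $f_{n*}\Omega^p_{X'_n}=\Omega^p_{X_n}$). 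Chasing the square then gives that the top arrow is a quasi-isomorphism.

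Alternatively, and perhaps more cleanly, I would argue purely on the level of exact triangles. Since $(X,Y)$, $X$, $Y$, and likewise $(X',Y')$, $X'$, $Y'$ are all weakly toroidal, and $f$ restricts to an isomorphism away from $Y$, Danilov-type vanishing (Theorem~\ref{mvtl} applied to $X$, noting $f\colon X'\to X$ is proper birational) gives $\tilde{\Omega}^p_X\isoto Rf_*\tilde{\Omega}^p_{X'}$: indeed both compute $R\epsilon_*\Omega^p_{X_\bullet}$ via the factorization through $X'_\bullet$. The subtle point is the corresponding statement for $Y$: here $f|_{Y'}\colon Y'\to Y$ need \emph{not} be birational, so one cannot simply invoke Theorem~\ref{mvtl} for $Y$. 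Instead I would observe that $Y'=f^{-1}(Y)$ as schemes (with reduced structure, by the weak-normality/weak-toroidality hypotheses), and that $Y$ and $Y'$ are both weakly toroidal with $\tilde{\Omega}^*$ computed combinatorially; a local combinatorial computation on local models shows $\tilde{\Omega}^p_Y \isoto Rf_*\tilde{\Omega}^p_{Y'}$ (this is where the structure of the monoidal complex and the fact that $f$ is modeled on toric desingularizations of the normalizations enters). Granting these two, the map of exact triangles
\[
\begin{CD}
\tilde{\Omega}^p_{(X,Y)} @>>> \tilde{\Omega}^p_X @>>> \tilde{\Omega}^p_Y @>{+1}>> \\
@VVV @VV{\wr}V @VV{\wr}V \\
Rf_*\tilde{\Omega}^p_{(X',Y')} @>>> Rf_*\tilde{\Omega}^p_{X'} @>>> Rf_*\tilde{\Omega}^p_{Y'} @>{+1}>>
\end{CD}
\]
yields the claim by the five lemma in the derived category.

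The main obstacle I anticipate is precisely the $Y$-versus-$Y'$ comparison: unlike $X'\to X$, the map $Y'\to Y$ is not birational, so the cleanest route is through a simultaneous resolution $\epsilon\colon X_\bullet\to X$ with $\epsilon^{-1}(Y)$ NC and factoring through $f$, whose existence (Lemma~\ref{sh} plus Hironaka applied to the pairs) I would need to set up carefully; once that is in place, the whole argument is formal, reducing everything to Theorem~\ref{mvtlpr} and the log-smooth $\Omega^p$-vanishing on the smooth components of the simplicial resolution. A minor technical point to verify is that the chosen $\epsilon'$ on $X'$ indeed satisfies the NC hypothesis of Theorem~\ref{mvtlpr} for the pair $(X',Y')$, which again is arranged by Hironaka since $Y'=f^{-1}(Y)$ pulls back to $Y_\bullet$.
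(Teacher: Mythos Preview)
Your first approach is essentially the paper's proof. The paper writes the same square, with the bottom row packaged as the Du~Bois complexes $\underline{\Omega}^p_{(X,Y)}\to Rf_*\underline{\Omega}^p_{(X',Y')}$; the vertical arrows are quasi-isomorphisms by Theorem~\ref{mvtlpr}, and the bottom arrow is a quasi-isomorphism by (the proof of)~\cite[Proposition~4.11]{DB81}. Your attempt to make the bottom arrow explicit is fine in spirit, but the setup via Lemma~\ref{sh} is not quite right: Lemma~\ref{sh} compares two resolutions \emph{of the same $X$}, whereas a resolution $X'_\bullet\to X'$ composed with $f$ need not be a resolution of $X$ (the cohomological-descent condition can fail over $Y$). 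The correct construction is the one underlying Du~Bois's argument: start with a resolution $X_\bullet\to X$ with $Y_\bullet$ NC, form the fiber product $X_\bullet\times_X X'$, and resolve each level to obtain $X'_\bullet\to X'$; then $f_n\colon X'_n\to X_n$ is proper, an isomorphism over $X_n\setminus Y_n$, and your componentwise log-smooth vanishing $\Omega^p_{(X_n,Y_n)}\isoto Rf_{n*}\Omega^p_{(X'_n,Y'_n)}$ applies. With this fix your square argument goes through and coincides with the paper's.

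Your second approach should be abandoned: as you already note, the comparison $\tilde{\Omega}^p_Y\to Rf_*\tilde{\Omega}^p_{Y'}$ cannot be obtained from Theorem~\ref{mvtl} since $Y'\to Y$ is not birational, and there is no ``local combinatorial computation'' available here, because $f$ is not assumed to be toric or locally modeled on anything combinatorial --- only the \emph{singularities} of $(X,Y)$ and $(X',Y')$ are weakly toroidal, not the morphism $f$. Any attempt to prove $\tilde{\Omega}^p_Y\isoto Rf_*\tilde{\Omega}^p_{Y'}$ directly would be at least as hard as the proposition itself.
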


\begin{proof} Consider the commutative diagram
\[ 
\xymatrix{
         \underline{\Omega}^p_{(X,Y)}   \ar[r]  & Rf_*\underline{\Omega}^p_{(X',Y')}     \\
         \tilde{\Omega}^p_{(X,Y)}   \ar[r]  \ar[u]        &  Rf_*\tilde{\Omega}^p_{(X',Y')}   \ar[u]               
} \]
The vertical arrows are quasi-isomorphisms, by Theorem~\ref{mvtlpr}. The top horizontal arrow
is a quasi-isomorphism by the proof of~\cite[Proposition 4.11]{DB81}. Therefore the bottom horizontal 
arrow is a quasi-isomorphism as well.
\end{proof}


\end{document}